\documentclass{article}
\usepackage{arxiv}
\usepackage[T1]{fontenc}
\usepackage[utf8]{inputenc}
\usepackage[english]{babel}
\usepackage{amsmath}
\usepackage{amsthm}
\usepackage{booktabs}
\usepackage{graphicx}
\usepackage{algorithm}
\usepackage{algorithmic}
\usepackage{hyperref}
\usepackage[capitalise,noabbrev]{cleveref}
\usepackage{sidecap}
\usepackage{amssymb}
\usepackage{mathrsfs}
\usepackage{mathtools}
\usepackage{accents}
\usepackage{dsfont}
\usepackage{diagbox,multirow}
\usepackage{hhline}
\usepackage{array}
\usepackage{xcolor}
\usepackage{colortbl}
\usepackage{enumerate}
\usepackage{bm}
\usepackage[shortlabels]{enumitem}
\usepackage{makecell}
\usepackage{float}
\usepackage{framed}
\usepackage{varwidth}
\usepackage[export]{adjustbox}
\usepackage{xparse}
\usepackage{csquotes}
\usepackage[nocompress]{cite}
\usepackage{subcaption}

\usepackage{etoolbox}
\apptocmd{\sloppy}{\hbadness 10000\relax}{}{}
\usepackage{amsbsy}

\makeatletter
\newcommand{\conditem}[2]{%
  \item[#2]%
  \protected@edef\@currentlabel{#2}%
  \label{#1}%
}
\makeatother
\newtheorem{theorem}{Theorem}[section]
\newtheorem{proposition}[theorem]{Proposition}
\newtheorem{lemma}[theorem]{Lemma}
\newtheorem{corollary}[theorem]{Corollary}
\newtheorem{definition}[theorem]{Definition}
\newtheorem{hypothesis}[theorem]{Hypothesis}
\theoremstyle{remark}
\newtheorem{remark}[theorem]{Remark}

\crefname{hypothesis}{Hypothesis}{Hypotheses}
\Crefname{hypothesis}{Hypothesis}{Hypotheses}
\Crefname{ALC@unique}{Line}{Lines}

\newcommand{\gt}{\bm{x}_{\mathrm{gt}}}
\newcommand{\N}{\mathbb{N}}
\newcommand{\R}{\mathbb{R}}

\newcommand{\norm}[1]{\left\lVert #1 \right\rVert}

\newcommand{\Rreg}{\mathcal{R}}
\newcommand{\Ssol}{\mathcal{S}}
\newcommand{\Ncal}{\mathcal{N}}

\newcommand{\bx}{{\bm{x}}}
\newcommand{\by}{{\bm{y}}}
\newcommand{\bz}{{\bm{z}}}
\newcommand{\bw}{{\bm{w}}}
\newcommand{\bh}{{\bm{h}}}
\newcommand{\bu}{{\bm{u}}}
\newcommand{\bv}{{\bm{v}}}
\newcommand{\be}{{\bm{e}}}
\newcommand{\br}{{\bm{r}}}
\newcommand{\bs}{{\bm{s}}}

\newcommand{\ba}{{\bm{a}}}
\newcommand{\bb}{{\bm{b}}}
\newcommand{\bg}{{\bm{g}}}

\newcommand{\bone}{\mathbf{1}}

\title{Iterated graph Laplacian for image restoration problems}

\author{
Stefano Aleotti\\
Department of Science and High Technology (DiSAT)\\
University of Insubria, Como 22100, Italy\\
\texttt{stefano.aleotti@uninsubria.it}
\And
Davide Bianchi\\
School of Mathematics (Zhuhai)\\
Sun Yat-sen University, Zhuhai 518055, China\\
\texttt{bianchid@mail.sysu.edu.cn}
\And
Florian Bo\ss mann\\
Department of Mathematics\\
Harbin Institute of Technology, Harbin 150001, China\\
\texttt{f.bossmann@hit.edu.cn}
\And
Marco Donatelli\\
Department of Science and High Technology (DiSAT)\\
University of Insubria, Como 22100, Italy\\
\texttt{marco.donatelli@uninsubria.it}
\And
Pietro Maurino\\
Department of Science and High Technology (DiSAT)\\
University of Insubria, Como 22100, Italy\\
\texttt{pmaurino@uninsubria.it}
}

\date{}

\colorlet{siaminlinkcolor}{green!50!black}
\colorlet{siamexlinkcolor}{red!50!black}
\hypersetup{
  colorlinks=true,
  allcolors=siaminlinkcolor,
  urlcolor=siamexlinkcolor,
  pdftitle={Iterated graph Laplacian for Image Restoration Problems},
  pdfauthor={S. Aleotti, D. Bianchi, F. Bossmann, M. Donatelli, and P. Maurino}
}

\begin{document}

\maketitle

\begin{abstract}
We study the graph Laplacian operator as a regularizer in a generalized Tikhonov framework for linear ill-posed problems. The Laplacian is updated iteratively from the current reconstruction, so that progressively sharper structural information about the solution is fed into the regularization term. We introduce three schemes: a standard one that rebuilds the Laplacian from each new iterate; an error-equation scheme that, following the error-based formulation of iterated Tikhonov regularization, builds the Laplacian from an estimate of the reconstruction error rather than of the image itself; and a mixed scheme combining the two. We establish convergence of all three schemes for noisy data under a priori parameter and stopping rules as the noise level tends to zero. Numerical experiments in two-dimensional computed tomography and image deblurring show consistent gains in reconstruction quality and sharper recovery of fine details.
\end{abstract}

\paragraph{Funding.}
D. Bianchi is supported by the Startup Fund of Sun~Yat-sen~University.

\paragraph{Keywords.}
graph Laplacian regularization, image restoration, inverse problems, iterated Tikhonov, variational regularization

\paragraph{2020 Mathematics Subject Classification.}
65F10, 65F22, 65J20, 68U10, 94A08

\section{Introduction}
We are concerned with linear image restoration problems, whose model equation reads
\begin{equation*}\label{eq:Model_Equation}\tag{ME}
	K\bx = \by^\delta, 
\end{equation*}
where $\bx$ is the digital image to be recovered, $X$ is a finite dimensional space of images, and
$K \colon X \to Y$ is a discretized, inherently ill-posed linear operator modeling
a degradation such as blurring, tomographic projection, subsampling, or data masking, to name a few representative examples.

Given a fixed $\gt \in X$ and $\by\coloneqq K\gt$, we want to recover a good approximation of the ground truth $\gt$ from a noisy observation $\by^\delta$ of $\by$, that is,
\begin{equation*}
	\by^\delta \coloneqq \by + \bm{\eta}, \qquad \|\by^\delta - \by\|_Y\leq \delta,
\end{equation*}
where $\bm{\eta}$ is a random perturbation, typically unavoidable in practical scenarios, and $\delta>0$ is an estimate of the noise intensity.
On the one hand, the goal is to obtain a reliable approximation of the ground truth signal $\gt$ when only a noisy observation $\by^\delta$ is available. On the other hand, the unperturbed system \cref{eq:Model_Equation} with $\delta = 0$ may be underdetermined, the noisy data $\by^\delta$ may not belong to the range of $K$, or the (pseudo-)inverse of $K$ may be highly sensitive even to small noise levels $\delta > 0$. For these reasons, the inverse problem associated with \cref{eq:Model_Equation} is ill-posed and therefore requires regularization. For a comprehensive overview of ill-posed inverse problems and regularization techniques, we refer the reader to
\cite{engl1996regularization,hansen1998rank,scherzer2009variational}.

The ill-posedness of $K$ and the presence of noise force the introduction of a regularization strategy to solve our model equation. Among the various regularization methods in the literature, we consider here the $\ell^2$--$\ell^q$ regularization for~\cref{eq:Model_Equation} (see, e.g., \cite{CH14,LMRS15,HLMRS17}) that reads as
\begin{equation}\label{model_eq2}
	\bx^\delta_\alpha \in	\underset{\bx \in X}{\arg\min} \left\{ \frac{1}{2}\norm{K\bx - \by^\delta}_Y^2 + \frac{\alpha}{q}\norm{D \bx}_q^q \right\},
\end{equation} 
where $D$ is the regularization operator, a linear mapping satisfying $\ker(D)\cap\ker(K)=\{\boldsymbol{0}\}$; see \cite[Chapter 8]{engl1996regularization} and \cite{morozov2012methods}. In this context, the term $\frac{1}{2}\norm{K\bx-\by^\delta}_Y^2$ quantifies the fidelity of the reconstruction; the term $\norm{D\bx}_q^q$, with $q\in[1,2]$, serves as a regularizer; and $\alpha>0$ balances the trade-off between data fidelity and regularization.

Typical choices for the regularization operator $D$ include linear differential operators, see, e.g., \cite{ROF92,hansen2006deblurring,DR14}.
However, the last decade has seen an increasing interest in nonlocal models and techniques from graph theory \cite{gilboa2007nonlocal,gilboa2009nonlocal,Peyre2008nonlocal,arias2009variational}. This approach has been further investigated in the context of image deblurring and computed tomography (CT) problems in \cite{bianchi2021graph_approximation,buccini2021graph,bianchi2021graph,lou2010image,zhang2010bregmanized}.

In particular, in \cite{bianchi2025data}, the authors investigate a variational method for ill-posed problems, named graphLa\texttt{+}$\Psi$, which
embeds a graph Laplacian operator in the regularization term. The novelty of this method lies in constructing the graph Laplacian based on a preliminary approximation of the solution, which is
obtained using any existing reconstruction method $\Psi$ from the literature. The need for an initial reconstruction is due to the fact that a graph structure should incorporate features such as interfaces and discontinuities of the true solution. However, it was observed that generating the graph directly from the observed,
noisy data $\by^{\delta}$ yields poor results for imaging tasks such as
deblurring or computed tomography. This is unsurprising, since in general
$\by^{\delta}$ does not share the same features as $\gt$, and may not even lie
in the same domain.

This initial preprocessing step plays an important role in the quality of the
final reconstruction: the richer the detail encoded in the initial estimate,
the more effectively the graph Laplacian propagates that information throughout
the optimization. The role of this step has already been examined in depth
within the graphLa\texttt{+}$\Psi$ method, where several reconstruction
techniques, both standard and data-learned, were compared depending on the
underlying inverse problem. In the same spirit, and independently, iterative
schemes that update the graph weights along the iterations were proposed
in~\cite{Peyre2008nonlocal,zhang2010bregmanized,arias2009variational}.

What these approaches share is an iterative refinement principle: rather
than committing to a single initial estimate, one repeatedly improves it and
feeds the result back into the reconstruction. This principle is not specific to
graph-based regularization, and it has long been exploited for inverse problems
more broadly~\cite{nagy2026mixed}. A classical instance is the iterated Tikhonov
method~\cite[Chapter~6]{engl1996regularization}, which has been studied
extensively and extended to nonsmooth regularization terms as
well~\cite{huang2013nonstationary,aleotti2025nested,jin2012nonstationary,jin2014nonstationary,bianchi2023uniformly,bianchi2026iterated,bianchi2015iterated,bianchi2025convergence}.

In this work, we introduce three \emph{iterated graph Laplacian regularization
schemes}. In each of them, the reconstruction from which the graph Laplacian is
built is updated iteratively, so that the regularizer is driven by progressively
more accurate structural information about the solution.

The common building block is one graph Laplacian regularization step. Given a
reference image $\bz\in X$, we construct the associated graph Laplacian
$\Delta_{\bz}$ (see \Cref{sec:graph_theory}) and solve
\begin{equation}\label{eq:graphLa+}
\bx_{\alpha,\bz}^{\delta} \in
\underset{\bx \in X}{\arg\min}
\left\{
\tfrac{1}{2}\norm{K\bx - \by^\delta}_Y^2
+ \tfrac{\alpha}{q}\|\Delta_{\bz}\bx\|_q^q
\right\}.
\end{equation}
The three schemes differ in how this step is iterated.

The \emph{standard iterative scheme} feeds each new reconstruction back as the
reference for the next step: the graph Laplacian is recomputed from the current
iterate, \Cref{eq:graphLa+} is solved again, and the procedure is repeated. As
the iterates improve, so does the structural prior encoded by the graph
Laplacian.

The \emph{error-equation iterative scheme} takes a different viewpoint. Instead
of refining the solution directly, it refines the initial reconstruction $\bz$
by estimating the error. Following the iterated Tikhonov method, the
data-fidelity term $\tfrac12\norm{K\bx-\by^\delta}_Y^2$ is replaced by
$\tfrac12\norm{K\bh-\br^\delta}_Y^2$, where $\br^\delta=\by^\delta-K\bz$ is the
residual and $\bh$ the error correction to be computed. The graph Laplacian is now built from an approximation of the error $\bh$
rather than of the solution.

The \emph{mixed iterative scheme} combines the standard iterative approach with the
error-equation strategy: it begins with a small number of standard iterations to
recover the dominant structure, and then passes to the error-equation
formulation to sharpen the remaining fine-scale details. The motivation for this
combination lies in the complementary behavior of the two approaches.
Empirically, after a few iterations of the standard scheme the performance
metrics reach a plateau, so that further iterations yield no significant gain in
reconstruction quality. We interpret this as an indication that the graph
Laplacian built from the current iterate can no longer extract additional
structural information about the ground-truth solution $\gt$. Once this plateau
is reached, switching to the error-based formulation recovers further detail and
improves the reconstruction.

The standard iterative scheme was first proposed in~\cite{bianchi2024improved}, under the name \texttt{it-graphLa}$\Psi$,
for acoustic impedance inversion in seismic exploration, but only as a heuristic
refinement, without any convergence analysis. A related idea was developed
independently in~\cite{bajpai2026convergence}: there too the graph Laplacian is
recomputed from the current iterate, but the reconstruction is updated by an
explicit Landweber-type step with an extra graph term,
\[
\bx_{k+1}^{\delta}
= \bx_k^{\delta}
- \alpha_k^{\delta}\, K^*\!\left(K\bx_k^{\delta} - \by^\delta\right)
- \beta_k^{\delta}\, \Delta_{\bx_k^{\delta}}\bx_k^{\delta},
\]
and the iteration is terminated by the Morozov discrepancy principle. Avoiding a
variational subproblem at each update makes that scheme computationally lighter,
since one step costs essentially a forward/adjoint evaluation plus sparse graph
operations. Its convergence theory, however, is established in a fixed
finite-dimensional setting: the monotonicity constants and the admissible
step-size ranges depend on the discretization and on the radius of a ball,
centered at the initial reconstruction, which is assumed to contain the exact
solution.

Our analysis follows a different route. We retain the variational
formulation~\cref{eq:graphLa+} and, beyond the standard scheme, introduce and
analyze the error-equation and mixed schemes, which have no counterpart
in~\cite{bianchi2024improved,bajpai2026convergence}. We prove convergence of all
three schemes for noisy data, as the noise level tends to zero, under an a priori
parameter/stopping rule. The error-equation and mixed schemes
additionally require a uniform bound on the iterates, which holds automatically
under the grayscale box constraint $0\leq\bx\leq 1$. Throughout, all constants are
tracked explicitly and shown to be independent of the discretization level, that
is, of the number of pixels $N$, and of the reference image $\bz$.

In particular, these
results rest on minimal assumptions: the existence of a uniformly bounded exact
solution, and a joint coercivity condition coupling the forward operator and the
graph Laplacian that goes back to
Morozov~\cite{morozov2012methods} (see \Cref{sec:preliminary_estimates}). Both are required only to hold uniformly in
$N$ and $\bz$, and are otherwise standard. 

The paper is organized as follows. \Cref{sec:graph_theory} presents the discrete
image model, the graph construction, and the dimension-free graph estimates used
throughout the analysis. \Cref{sec:preliminary_estimates} introduces the two main assumptions and the 
fixed-reference variational problem, and proves the associated well-posedness and
stability results. \Cref{sec:iterschemes} defines the standard, error-equation,
and mixed iterative graph Laplacian schemes. \Cref{sec:convergence_standard}
analyzes the regularization properties of the standard scheme, and \Cref{sec:convergence_error}
treats the error-equation and mixed schemes. \Cref{sec:numerical_results} reports
extensive numerical experiments on image deblurring and computed tomography.
Finally, \Cref{sec:appendix-coercivity} verifies the joint coercivity assumption for the
forward operators used in the numerical experiments.

\section{Graph setting and graph estimates}\label{sec:graph_theory}

The variational approach considered in \Cref{eq:graphLa+} and all the iterative strategies introduced later rely on the graph Laplacian operator as a regularization tool. We therefore start by describing the discrete image model, the theoretical framework, and the graph construction.

Let $P$ be a finite set of pixels, with $N=|P|$, and let 
\[
X\coloneqq\R^{P}.
\]
We consider grayscale images as functions on the pixel set into $[0,1]$, that is
\[
\bx\colon P \to [0,1].
\]
In particular, the set of grayscale images is a proper subset of $X$. For $1\le q<\infty$ we use the normalized discrete norm
\[
\norm{\bx}_q\coloneqq\left(\frac1N\sum_{i\in P}|\bx(i)|^q\right)^{1/q},
\qquad \bx\in X,
\]
and
\[
\norm{\bx}_\infty\coloneqq\max_{i\in P}|\bx(i)|.
\]
The data space $Y$ is assumed to be a Hilbert space, with norm $\norm{\cdot}_Y$ induced by its inner product. In the applications that we consider, $Y$ is a finite-dimensional normalized Euclidean space, as for $X$. 

The model forward map is a linear operator between the Banach spaces just defined,
\begin{equation}\label{eq:model_operator}
K\colon (X, \|\cdot\|_q) \to (Y, \|\cdot\|_Y).
\end{equation}
We will often omit the space norms, since they will be clear from the context, and we write $\|A\|$ for the operator norm.

For a comprehensive introduction to the graph-based framework adopted here, we refer to \cite{keller2021graphs}. 

\begin{definition}[Graph and graph Laplacian]
A graph over $P$ is a symmetric function $\omega\colon P\times P\to[0,\infty)$ such that $\omega(i,i)=0$ for every $i\in P$. Each pair $(i,j)$ such that $\omega(i,j)>0$ is called an \emph{edge}.

The graph Laplacian $\Delta\colon X\to X$ associated with a graph $\omega$ is defined by the action
\begin{equation*}
(\Delta \bu)(i)\coloneqq\sum_{j\in P}\omega(i,j)\bigl(\bu(i)-\bu(j)\bigr),
\qquad i\in P.
\end{equation*}
\end{definition}

In the numerical experiments of \Cref{sec:numerical_results} and in the theory below, the graph is built from a reference image using a local Gaussian construction.

We first fix the spatial geometry. Throughout, we identify each pixel $i\in P$ with its position $(i_1,i_2)\in\mathbb{Z}^2$ on the two-dimensional pixel grid, so that for $i,j\in P$
\[
\operatorname{dist}(i,j)\coloneqq|i_1-j_1|+|i_2-j_2|
\]
denotes the Manhattan distance between the two pixel locations. This spatial distance is only used to select neighbors and is unrelated to the normalized norm $\|\cdot\|_1$ on $X$ introduced above. Different geometric distances may be used; however, for simplicity, we use only the Manhattan distance in this manuscript.

Fix now a spatial radius $R\in\N$ and a Gaussian parameter $\sigma>0$. For $i\in P$ define 
\[
\Ncal_R(i)\coloneqq\{j\in P \mid 0< \operatorname{dist}(i,j) \le R\}.
\]
Observe that the \emph{spatial degree} $|\Ncal_R(i)|$ is uniformly bounded:
\begin{equation}\label{eq:spatial-degree_uniform_norm}
d_R\coloneqq\sup_N\max_{i\in P}|\Ncal_R(i)|<\infty .
\end{equation}

Fix also a contrast cap $B_{\rm c}>0$, independent of $N$, and set
\begin{equation}\label{eq:contrast-cap}
d_{B_{\rm c}}(a,b)\coloneqq\min\{|a-b|,B_{\rm c}\},
\qquad a,b\in\R.
\end{equation}

Given a reference image $\bz\in X$, define
\begin{equation}\label{eq:gaussian-weights}
\omega_\bz(i,j)
\coloneqq\mathds{1}_{\{0<\operatorname{dist}(i,j)\le R\}}
\exp\!\left(-\frac{d_{B_{\rm c}}(\bz(i),\bz(j))^2}{\sigma^2}\right),
\qquad i,j\in P.
\end{equation}
The graph Laplacian induced by $\bz$ is
\begin{equation*}
(\Delta_\bz \bu)(i)
\coloneqq\sum_{j\in \Ncal_R(i)} \omega_\bz(i,j)\bigl(\bu(i)-\bu(j)\bigr),
\qquad \bu\in X .
\end{equation*}
We will often call $\bz$ the \emph{graph input} or the \emph{reference image}.

Let us remark that the contrast cap $B_{\rm c}$ acts only on the contrast range and imposes no
constraint on the graph input. It has no effect whenever
 the values of the graph input lie in an interval of length at most
$B_{\rm c}$; in particular, choosing $B_{\rm c}\ge1$ leaves ordinary
$[0,1]$-valued grayscale graph inputs unchanged. 

For $q\in[1,2]$, define
\begin{equation}\label{def:graph-regularizer}
\Rreg_\bz^q(\bu)\coloneqq\frac1q\norm{\Delta_\bz \bu}_q^q .
\end{equation}

\subsection{Dimension-free graph estimates}\label{ssec:graph-estimates}

The next estimates are consequences of the local Gaussian construction and the fixed radius $R$. 

\begin{lemma}\label{lem:graph-bounds}
For every $q\in[1,\infty]$, every $N$, every reference image $\bz\in X$, and every $\bu\in X$,
\begin{equation}\label{eq:Delta-uniform-bound}
\norm{\Delta_\bz \bu}_q\le 2d_R\norm{\bu}_q.
\end{equation}
Moreover, for all $\bx,\bz,\bu\in X$,
\begin{equation}\label{eq:Delta-Lipschitz}
\norm{(\Delta_\bx-\Delta_\bz)\bu}_q
\le L_\Delta\,\norm{\bx-\bz}_\infty\,\norm{\bu}_q,
\qquad
L_\Delta\coloneqq\frac{4d_R}{\sigma}\sqrt{\frac{2}{\mathrm{e}}} .
\end{equation}
The constants $2d_R$ and $L_\Delta$ are independent of $N$.
\end{lemma}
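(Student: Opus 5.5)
The argument splits $\Delta_\bz$ into its diagonal (degree) part and its off-diagonal (adjacency) part, bounds each by a translation-type estimate, and uses that the weights are at most $1$ and supported on pairs at distance at most $R$. For each nonzero offset $v\in\mathbb{Z}^2$ with $0<|v_1|+|v_2|\le R$ I write
\[
(\Delta_\bz\bu)(i)=\sum_{v}\omega_\bz(i,i+v)\bigl(\bu(i)-\bu(i+v)\bigr),
\]
with the convention that terms with $i+v\notin P$ vanish. There are at most $d_R$ offsets. By Minkowski's inequality in the normalized $\ell^q$ norm,
\[
\norm{\Delta_\bz\bu}_q\le\sum_v\bigl(\norm{\bu}_q+\norm{T_v\bu}_q\bigr),
\]
where $(T_v\bu)(i)=\bu(i+v)$ if $i+v\in P$ and $0$ otherwise. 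I bound each term separately:

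\begin{itemize}
\item In the first term, $|\omega_\bz(i,i+v)\bu(i)|\le|\bu(i)|$ pointwise.
\item In the second term, $|\omega_\bz(i,i+v)\bu(i+v)|\le|(T_v\bu)(i)|$ pointwise.
\item Since $i\mapsto i+v$ is injective, $\norm{T_v\bu}_q\le\norm{\bu}_q$. The normalization $1/N$ is the same on both sides, so it causes no trouble.
\end{itemize}

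Summing over the at most $d_R$ offsets gives \cref{eq:Delta-uniform-bound}. The case $q=\infty$ is identical, with max in place of the sum.

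For the Lipschitz bound, the operator $\Delta_\bx-\Delta_\bz$ is again a graph Laplacian with the same stencil, but with signed weights $\omega_\bx(i,j)-\omega_\bz(i,j)$. The argument above used only $|\omega|\le 1$, so it gives
\[
\norm{(\Delta_\bx-\Delta_\bz)\bu}_q\le 2d_R\,\max_{i,j}|\omega_\bx(i,j)-\omega_\bz(i,j)|\,\norm{\bu}_q.
\]
It remains to bound the weight difference by $\frac{2}{\sigma}\sqrt{2/\mathrm e}\,\norm{\bx-\bz}_\infty$.

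Define $g(t)=\exp(-t^2/\sigma^2)$ for $t\ge0$. Its derivative satisfies $|g'(t)|=\frac{2t}{\sigma^2}e^{-t^2/\sigma^2}$, which is maximized at $t=\sigma/\sqrt2$, with value $\frac{\sqrt2}{\sigma}e^{-1/2}=\frac1\sigma\sqrt{2/\mathrm e}$. Hence $g$ is Lipschitz with constant $\frac1\sigma\sqrt{2/\mathrm e}$.

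The map $(a,b)\mapsto d_{B_{\rm c}}(a,b)$ is a composition of $|a-b|$ with the $1$-Lipschitz truncation $s\mapsto\min\{s,B_{\rm c}\}$. Therefore
\[
|d_{B_{\rm c}}(\bx(i),\bx(j))-d_{B_{\rm c}}(\bz(i),\bz(j))|\le|(\bx-\bz)(i)|+|(\bx-\bz)(j)|\le 2\norm{\bx-\bz}_\infty.
\]
Combining the two Lipschitz bounds gives a weight difference of at most $\frac{2}{\sigma}\sqrt{2/\mathrm e}\,\norm{\bx-\bz}_\infty$. Multiplying by $2d_R$ yields exactly $L_\Delta=\frac{4d_R}{\sigma}\sqrt{2/\mathrm e}$.

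The only delicate point is tracking constants so that they match the stated $L_\Delta$: the factor $2$ from the two endpoints $i,j$ times the factor $2d_R$ from the stencil bound. Independence of $N$ follows because only $d_R$ from \cref{eq:spatial-degree_uniform_norm} and $\sigma$ enter.
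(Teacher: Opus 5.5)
Your proof is correct, and its structure matches the paper's: diagonal and off-diagonal contributions of $\Delta_\bz$ are bounded using only $|\omega|\le 1$ and the finite stencil. The Lipschitz part is exactly the paper's computation: the Gaussian has Lipschitz constant $\frac1\sigma\sqrt{2/\mathrm e}$, the $1$-Lipschitz truncation contributes a factor $2$ from the two endpoints, and the same stencil bound is then applied to the signed weights $\omega_\bx-\omega_\bz$.

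The real difference is how the operator norm is controlled.
\begin{itemize}
\item The paper applies a row/column (Schur-type) bound to $D_\bz$ and $A_\bz$. It uses directly that each row and column has at most $|\mathcal{N}_R(i)|\le d_R$ nonzero entries.
\item You decompose by offsets $v$ and apply Minkowski's inequality with the injective shifts $T_v$. This is more elementary: you never need a Schur test or interpolation between $q=1$ and $q=\infty$.
\end{itemize}

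Your bound is in terms of the number of offsets, which is $2R(R+1)$, not the neighborhood sizes. So your claim that there are at most $d_R$ offsets holds only when some pixel set in the family contains a pixel with a full $R$-neighborhood, so that $d_R=2R(R+1)$. That is the case for the intended rectangular grids of growing size. For small or irregular pixel sets, the number of realized offsets can exceed $\max_i|\mathcal{N}_R(i)|$. For example, with two adjacent pixels and $R=1$ there are two offsets but each pixel has one neighbor, so your argument would give the constant $4$ instead of $2d_R=2$. You should either state this assumption explicitly or switch to the row/column bound, which yields $2d_R$ with no geometric assumption.
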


\begin{proof}
Let $A_\bz \colon X \to X$ be the adjacency operator
\[
(A_\bz \bu)(i)\coloneqq\sum_{j\in\Ncal_R(i)}\omega_\bz(i,j)\bu(j),
\]
and let $D_\bz \colon X \to X$ be the degree operator
\[
(D_\bz \bu)(i) \coloneqq \operatorname{deg}_\bz(i)\bu(i), \qquad \operatorname{deg}_\bz(i)\coloneqq\sum_{j\in\Ncal_R(i)}\omega_\bz(i,j).
\]
Since $0\le \omega_\bz(i,j)\le1$ and every row and column has at most $d_R$ nonzero entries by~\Cref{eq:spatial-degree_uniform_norm}, we have
\[
\norm{A_\bz}\le d_R,
\qquad
\norm{D_\bz}\le d_R
\]
for every $q\in[1,\infty]$. Since $\Delta_\bz=D_\bz-A_\bz$, \Cref{eq:Delta-uniform-bound} follows.

For the Lipschitz estimate, set
\[
g_{B_{\rm c}}(t)\coloneqq
\exp\!\left(-\frac{\min\{t,B_{\rm c}\}^2}{\sigma^2}\right),
\qquad t\ge0.
\]
The function $g_{B_{\rm c}}$ is globally Lipschitz and
\[
\operatorname{Lip}(g_{B_{\rm c}})
\le\frac1\sigma\sqrt{\frac{2}{\mathrm{e}}}.
\]
For every edge $(i,j)$,
\[
\begin{aligned}
|\omega_\bx(i,j)-\omega_\bz(i,j)|
&\le \frac1\sigma\sqrt{\frac{2}{\mathrm{e}}}
\bigl||\bx(i)-\bx(j)|-|\bz(i)-\bz(j)|\bigr|  \\
&\le \frac1\sigma\sqrt{\frac{2}{\mathrm{e}}}
\bigl(|\bx(i)-\bz(i)|+|\bx(j)-\bz(j)|\bigr) \\
&\le \frac{2}{\sigma}\sqrt{\frac{2}{\mathrm{e}}}\,\norm{\bx-\bz}_\infty.
\end{aligned}
\]
The difference $\Delta_\bx-\Delta_\bz$ is again a difference between a degree operator and an adjacency operator. Applying the same row-column bound as above gives \Cref{eq:Delta-Lipschitz}.
\end{proof}

\begin{corollary}\label{cor:R-bounds}
For every $q\in[1,2]$,
\begin{equation}\label{eq:R-uniform-bound}
0\le \Rreg_\bz^q(\bu)\le \frac{(2d_R)^q}{q}\norm{\bu}_q^q .
\end{equation}
Furthermore, there exists a constant
\[
C_{R,q}\coloneqq2(2d_R)^{q-1}L_\Delta
\]
independent of $N$ such that, for all $\bx,\bz,\bu\in X$,
\begin{equation}\label{eq:R-Lipschitz-guide}
\bigl|\Rreg_\bx^q(\bu)-\Rreg_\bz^q(\bu)\bigr|
\le C_{R,q}\,\norm{\bx-\bz}_\infty\,\norm{\bu}_q^q .
\end{equation}
Consequently, if $\bz_m\to \bz$ in $\norm{\cdot}_\infty$ and $\bu_m\to \bu$ in $X$, then
\begin{equation}\label{eq:R-continuity}
\Rreg_{\bz_m}^q(\bu_m)\to \Rreg_\bz^q(\bu).
\end{equation}
\end{corollary}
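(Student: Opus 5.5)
The plan is to derive all three claims from \Cref{lem:graph-bounds}, using elementary inequalities for the power $t\mapsto t^q/q$.

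\emph{Uniform bound.} Nonnegativity of $\Rreg_\bz^q$ is clear from the definition~\cref{def:graph-regularizer}. For the upper bound we raise \Cref{eq:Delta-uniform-bound} to the power $q$ and divide by $q$, which gives $\frac1q\norm{\Delta_\bz\bu}_q^q\le \frac{(2d_R)^q}{q}\norm{\bu}_q^q$.

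\emph{Lipschitz estimate in the reference image.} Set $a\coloneqq\norm{\Delta_\bx\bu}_q$ and $b\coloneqq\norm{\Delta_\bz\bu}_q$. By \Cref{eq:Delta-uniform-bound}, both satisfy $a,b\le 2d_R\norm{\bu}_q$. The mean value theorem applied to $t\mapsto t^q/q$ on $[0,\infty)$ gives
\[
\Bigl|\tfrac{a^q}{q}-\tfrac{b^q}{q}\Bigr|\le \max\{a,b\}^{q-1}|a-b|\le (2d_R)^{q-1}\norm{\bu}_q^{q-1}|a-b|,
\]
and this remains valid for $q=1$ with the convention $t^0=1$. By the reverse triangle inequality and \Cref{eq:Delta-Lipschitz},
\[
|a-b|\le\norm{(\Delta_\bx-\Delta_\bz)\bu}_q\le L_\Delta\norm{\bx-\bz}_\infty\norm{\bu}_q .
\]
Combining the two inequalities yields \Cref{eq:R-Lipschitz-guide} with constant $(2d_R)^{q-1}L_\Delta$. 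This is in fact smaller than the stated $C_{R,q}$, so the factor $2$ in $C_{R,q}$ is simply slack. It would also cover a cruder route in which the two terms are bounded separately, for instance $a^{q-1}+b^{q-1}$ in place of $\max\{a,b\}^{q-1}$. Independence of $N$ is inherited from $d_R$ and $L_\Delta$.

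\emph{Continuity.} We split
\[
|\Rreg_{\bz_m}^q(\bu_m)-\Rreg_\bz^q(\bu)|\le |\Rreg_{\bz_m}^q(\bu_m)-\Rreg_\bz^q(\bu_m)|+|\Rreg_\bz^q(\bu_m)-\Rreg_\bz^q(\bu)|.
\]
The first term is at most $C_{R,q}\norm{\bz_m-\bz}_\infty\norm{\bu_m}_q^q$ by \Cref{eq:R-Lipschitz-guide}. It tends to zero because $\norm{\bu_m}_q$ is bounded, since $(\bu_m)$ converges and $X$ is finite dimensional, so all norms are equivalent there.

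For the second term, $\Delta_\bz$ is a fixed linear map on a finite-dimensional space and is therefore continuous, and $t\mapsto t^q/q$ is continuous. Hence $\Rreg_\bz^q(\bu_m)\to\Rreg_\bz^q(\bu)$. Alternatively, the same mean-value bound as above gives
\[
|\Rreg_\bz^q(\bu_m)-\Rreg_\bz^q(\bu)|\le (2d_R)^q\max\{\norm{\bu_m}_q,\norm{\bu}_q\}^{q-1}\norm{\bu_m-\bu}_q,
\]
which tends to zero.

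The only mildly delicate step is the case $q=1$ in the mean value argument, where the power $q-1$ vanishes. There $t\mapsto t$ is $1$-Lipschitz, so the bound holds directly, and no step is expected to be a real obstacle.
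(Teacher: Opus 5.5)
Your proof is correct and follows the paper's route. The uniform bound comes from \Cref{eq:Delta-uniform-bound}. The Lipschitz estimate combines an elementary inequality for $t\mapsto t^q/q$ with the reverse triangle inequality and \Cref{eq:Delta-Lipschitz}. Continuity follows from the same two-term splitting. The only difference is that you bound by $\max\{a,b\}^{q-1}$ where the paper uses the cruder $a^{q-1}+b^{q-1}$, and you correctly note that this is exactly where the factor $2$ in $C_{R,q}$ comes from.
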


\begin{proof}
Estimate \Cref{eq:R-uniform-bound} follows immediately from \Cref{eq:Delta-uniform-bound}.

Let $\ba\coloneqq\Delta_\bx \bu$ and $\bb\coloneqq\Delta_\bz \bu$. For $q\in[1,2]$,
\[
\frac1q\bigl|\norm{\ba}_q^q-\norm{\bb}_q^q\bigr|
\le \bigl(\norm{\ba}_q^{q-1}+\norm{\bb}_q^{q-1}\bigr)
\norm{\ba-\bb}_q.
\]
 Using \Cref{lem:graph-bounds},
\[
\norm{\ba}_q^{q-1}+\norm{\bb}_q^{q-1}
\le 2(2d_R)^{q-1}\norm{\bu}_q^{q-1},
\qquad
\norm{\ba-\bb}_q\le L_\Delta\norm{\bx-\bz}_\infty\norm{\bu}_q,
\]
and \Cref{eq:R-Lipschitz-guide} follows.

For \Cref{eq:R-continuity}, write
\[
\begin{aligned}
|\Rreg_{\bz_m}^q(\bu_m)-\Rreg_\bz^q(\bu)|
&\le |\Rreg_{\bz_m}^q(\bu_m)-\Rreg_\bz^q(\bu_m)|
+|\Rreg_\bz^q(\bu_m)-\Rreg_\bz^q(\bu)|.
\end{aligned}
\]
The first term tends to zero by \Cref{eq:R-Lipschitz-guide}, since $(\|\bu_m\|_q)$ is bounded. The second term tends to zero by the continuity (uniform in $N$) of the fixed linear map $\Delta_\bz$ (see~\Cref{eq:Delta-uniform-bound}) and of the map $\bv\mapsto\norm{\bv}_q^q$.
\end{proof}

\section{Single iteration step and well-posedness}\label{sec:preliminary_estimates}
This section sets up the regularization framework and the main hypotheses before the iterative schemes are introduced. We then refine several stability estimates for the fixed-reference, or ``single-step'', variational problem involving the graph Laplacian, first established in~\cite{bianchi2025data}. These facts will be used repeatedly in the convergence analysis.

At discretization level $N$, given the model operator $K$ in~\Cref{eq:model_operator}, we recall the model equation~\cref{eq:Model_Equation}
\begin{equation*}
K\bx=\by^\delta,
\end{equation*}
where $\by^\delta$ is a noisy perturbation of the true data $\by$ such that
\begin{equation*}
\norm{\by^\delta-\by}_Y\le \delta.
\end{equation*}

\begin{hypothesis}[Existence of solutions]\label{hyp:consistency}
The observed data is the realization of a ground truth through the forward
operator, $\by=K\gt$, and the ground truth is uniformly bounded across
discretization levels: there is a constant $M_{\rm gt}>0$, independent of $N$, such that
\[
K\gt=\by,
\qquad
\norm{\gt}_2\le M_{\rm gt} .
\]
Since the normalized norms are nondecreasing in the exponent, it follows that
$\norm{\gt}_q\le M_{\rm gt}$ for every $q\in[1,2]$.
In particular, for every discretization level $N$, the solution set
\[
\Ssol(\by)\coloneqq\{\bx\in X:K\bx=\by\}
\]
is nonempty, since it contains $\gt$, and contains a uniformly bounded element.
For images in a fixed grayscale range, this bound holds automatically in the
normalized norms.
\end{hypothesis}

\begin{hypothesis}[Uniform joint coercivity]\label{hyp:joint-coercivity}
There exist constants $C_J>0$ and $M_K>0$, independent of $N$ and of the reference image $\bz$, such that
\begin{equation*}
\norm{K\bu}_Y\le M_K\norm{\bu}_2
\qquad\forall \bu\in X,
\end{equation*}
and
\begin{equation}\label{eq:joint-coercivity}
\norm{\bu}_2
\le C_J\Bigl(\norm{K\bu}_Y+\norm{\Delta_\bz \bu}_q\Bigr)
\qquad\forall \bu,\bz\in X .
\end{equation}
\end{hypothesis}

Condition~\cref{eq:joint-coercivity} is the graph Laplacian $\ell^q$ analogue of
the classical \emph{complementation condition} of Morozov~\cite{morozov2012methods}
(see also~\cite[Chapter~8]{engl1996regularization}). In the Hilbert-space case $q=2$, it is usually stated in the equivalent squared
form: there exists $\gamma>0$ such that
\[
\norm{K\bu}_Y^2+\norm{\Delta_\bz\bu}_2^2 \;\geq\; \gamma\,\norm{\bu}_2^2
\qquad\forall\,\bu\in X .
\]
The two formulations, for $q=2$, are equivalent, since $\tfrac{1}{\sqrt2}(a+b)\leq\sqrt{a^2+b^2}
\leq a+b$ for $a,b\geq0$: one passes from the squared form to
\Cref{eq:joint-coercivity} with $C_J=\gamma^{-1/2}$, and back with
$\gamma=(2C_J^2)^{-1}$.

A necessary requirement is that
the forward operator and the regularizer share no nontrivial kernel,
\[
\ker K\cap\ker\Delta_\bz=\{\boldsymbol 0\},
\]
and in the present finite-dimensional setting, this is also sufficient. What \Cref{hyp:joint-coercivity} demands
beyond this pointwise statement is that the bound holds \emph{uniformly} in the
discretization level $N$ and in the reference image $\bz$. This uniformity, rather
than the inequality at a fixed $N$, is what yields the dimension-free constants in
the convergence analysis.
The coercivity condition in \Cref{hyp:joint-coercivity} is then verified in \Cref{sec:appendix-coercivity} for some representative model operators $K$.

Now we introduce the definition of solutions for the model equation~\cref{eq:Model_Equation} with respect to the graph regularizer. It is standard in variational problems, see, for example, \cite[Definition~3.24]{scherzer2009variational}.

\begin{definition}[Regularizing solutions]\label{def:R-min-solution}
Fix $\bz\in X$ and define $\Rreg_\bz^q$ as per~\Cref{def:graph-regularizer}. A point $\bx^\star\in \Ssol(\by)$ is called an $\Rreg_\bz^q$-minimizing solution if
\begin{equation*}
\Rreg_\bz^q(\bx^\star)
=\min\{\Rreg_\bz^q(\bs):\bs\in\Ssol(\by)\}.
\end{equation*}
\end{definition}

The iterative schemes below (\Cref{sec:iterschemes}) update the reference image $\bz$ at every step, so the
limiting solution is not the minimizer for one fixed reference image but an
element of the collection of all minimizers obtained as the reference image
ranges over the exact solutions. We make this precise.

\begin{definition}[Graph-minimizing solutions]\label{def:Sgm}
The set of \emph{graph-minimizing solutions} of \Cref{eq:Model_Equation} is
\begin{equation}\label{eq:Sgm-def}
\mathcal S_{\rm gm}(\by)
\coloneqq
\bigcup_{\bz\in\Ssol(\by)}
\operatorname*{argmin}_{\bs\in\Ssol(\by)}\Rreg_\bz^q(\bs),
\end{equation}
that is, $\bu\in\mathcal S_{\rm gm}(\by)$ if and only if $\bu$ is an
$\Rreg_\bz^q$-minimizing solution in the sense of \Cref{def:R-min-solution} for
some reference $\bz\in\Ssol(\by)$.
\end{definition}

\begin{remark}\label{rem:Sgm}
   By construction $\mathcal S_{\rm gm}(\by)\subseteq\Ssol(\by)$. Under
\Cref{hyp:consistency,hyp:joint-coercivity} the set is
nonempty: for each fixed $\bz\in\Ssol(\by)$ the joint coercivity
\Cref{eq:joint-coercivity} makes the sublevel sets of $\Rreg_\bz^q$ within the
affine solution set $\Ssol(\by)$ bounded, so the inner $\operatorname*{argmin}$
is attained. The set reduces to a single point when $K$ is injective, since then
$\Ssol(\by)=\{\gt\}$. 
\end{remark}

\subsection{Single-step variational problem}\label{ssec:fixed-graph}
With the framework and hypotheses in place, we now study the single-step
problem~\cref{eq:graphLa+}, in which the reference image $\bz$ is held fixed.
This is the building block of all three iterative schemes. Each iteration updates
the reference $\bz$ (through the recomputed graph Laplacian), and the
error-equation and mixed schemes update the data $\by$ as well (through the
residual). We therefore need the single step to be well defined and to depend stably on both
$\by$ and $\bz$, with constants independent of the discretization level $N$.

The next \Cref{prop:existence} establishes that the fixed-reference functional is convex
and coercive, with sublevel sets bounded uniformly in $N$ and $\bz$; a minimizer
always exists, and is unique when $q>1$. \Cref{cor:one-step-stability} then shows
that this minimizer varies continuously with the data $\by$ and the reference
$\bz$, again uniformly in $N$. Both facts are used repeatedly in the convergence
analysis in \Cref{sec:convergence_standard,sec:convergence_error}.

For $q\in[1,2]$, $\alpha>0$, a reference image $\bz\in X$, and data $\by\in Y$,
define
\begin{subequations}\label{eq:fixed-graph-functional}
\begin{align}
\Phi_{\alpha,\bz}^q(\bx;\by)
&\coloneqq\frac12\norm{K\bx-\by}_Y^2+\alpha\Rreg_\bz^q(\bx),
\qquad \bx\in X ,
\label{eq:fixed-graph-functional-a}\\[6pt]
T^q_{\alpha,\bz}(\by)
&\coloneqq\operatorname*{argmin}_{\bx\in X}\;\Phi_{\alpha,\bz}^q(\bx;\by).
\label{eq:fixed-graph-functional-b}
\end{align}
\end{subequations}
This notation will be used with either exact data $\by$ or noisy data
$\by^\delta$.

\begin{proposition}\label{prop:existence}
Assume \Cref{hyp:joint-coercivity}. For every $N$, $\bz\in X$,
$\by\in Y$, $\alpha>0$, and $q\in[1,2]$, the functional
$\Phi_{\alpha,\bz}^q(\cdot;\by)$ is convex and coercive. More precisely, if
\[
\Phi_{\alpha,\bz}^q(\bx;\by)\le M,
\]
then
\begin{equation}\label{eq:sublevel-bound}
\norm{\bx}_2
\le C_J\left((2M)^{1/2}+\norm{\by}_Y
+\left(\frac{qM}{\alpha}\right)^{1/q}\right).
\end{equation}
Consequently, $\Phi_{\alpha,\bz}^q(\cdot;\by)$ admits at least one minimizer.
If $q>1$, this minimizer is unique.
\end{proposition}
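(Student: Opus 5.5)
We will establish the statement in three steps: convexity, the sublevel bound \cref{eq:sublevel-bound}, and then existence and uniqueness.

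\emph{Convexity.} The functional $\Phi_{\alpha,\bz}^q(\cdot;\by)$ is a sum of two convex pieces. The first, $\bx\mapsto\frac12\norm{K\bx-\by}_Y^2$, is a convex quadratic composed with an affine map. The second, $\alpha\Rreg_\bz^q$, is the composition of the linear map $\Delta_\bz$ with $\bv\mapsto\frac1q\norm{\bv}_q^q$, which is convex because $t\mapsto|t|^q$ is convex for $q\ge1$.

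\emph{Sublevel bound.} Suppose $\Phi_{\alpha,\bz}^q(\bx;\by)\le M$. Both terms are nonnegative, so each is bounded by $M$ separately. From $\frac12\norm{K\bx-\by}_Y^2\le M$ the triangle inequality gives
\[
\norm{K\bx}_Y\le (2M)^{1/2}+\norm{\by}_Y .
\]
From $\frac{\alpha}{q}\norm{\Delta_\bz\bx}_q^q\le M$ we get
\[
\norm{\Delta_\bz\bx}_q\le (qM/\alpha)^{1/q}.
\]
Inserting these two bounds into the joint coercivity \cref{eq:joint-coercivity} of \Cref{hyp:joint-coercivity}, with $\bu=\bx$, yields \cref{eq:sublevel-bound}. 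The right-hand side depends only on $M$, $\norm{\by}_Y$, $\alpha$, $q$ and $C_J$, so the bound is uniform in $N$ and $\bz$. Since the right-hand side is finite for every $M$, every sublevel set is bounded, which is coercivity.

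\emph{Existence.} The space $X=\R^P$ is finite dimensional and $\Phi_{\alpha,\bz}^q(\cdot;\by)$ is continuous, because it is built from linear maps and norms. Take $M=\Phi_{\alpha,\bz}^q(\boldsymbol 0;\by)$. The corresponding sublevel set is nonempty, since it contains $\boldsymbol 0$, bounded by the previous step, and closed by continuity, hence compact. A continuous function attains its minimum on this compact set, and that point is a global minimizer.

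\emph{Uniqueness for $q>1$.} This is the only step that needs care, because neither term is strictly convex on all of $X$ in general. Suppose $\bx_1\neq\bx_2$ are both minimizers and set $\bu=\bx_1-\bx_2$. By convexity the midpoint $\frac12(\bx_1+\bx_2)$ is also a minimizer. Equality in the convexity inequality for the quadratic term forces $K\bx_1=K\bx_2$, since $\bv\mapsto\norm{\bv}_Y^2$ is strictly convex on the Hilbert space $Y$. Equality for the regularizer, together with strict convexity of $t\mapsto|t|^q$ when $q>1$, forces $\Delta_\bz\bx_1=\Delta_\bz\bx_2$ componentwise. Hence $K\bu=\boldsymbol 0$ and $\Delta_\bz\bu=\boldsymbol 0$, and \cref{eq:joint-coercivity} gives $\norm{\bu}_2=0$. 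This contradicts $\bx_1\neq\bx_2$.

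For $q=1$ this argument fails, since $|t|$ is not strictly convex, which is why the statement only guarantees existence in that case.
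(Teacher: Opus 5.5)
Your proposal is correct and follows the paper's proof essentially step by step. You prove convexity of both terms, feed the bounds $\norm{K\bx}_Y\le(2M)^{1/2}+\norm{\by}_Y$ and $\norm{\Delta_\bz\bx}_q\le(qM/\alpha)^{1/q}$ into \cref{eq:joint-coercivity}, get existence from compactness of sublevel sets in finite dimensions, and obtain uniqueness for $q>1$ from strict convexity together with $\ker K\cap\ker\Delta_\bz=\{\boldsymbol 0\}$. Your midpoint/equality-case argument is just the contrapositive of the paper's ``strictly convex along every nontrivial segment'' argument; it implicitly uses that equality for the sum forces equality in each convex summand, which is immediate.
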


\begin{proof}
Convexity follows from the convexity of the data term $\bx \mapsto \frac12\norm{K\bx - \by}_Y^2$ and of the regularization term
$\bx\mapsto\norm{\Delta_\bz \bx}_q^q$. Assume $\Phi_{\alpha,\bz}^q(\bx;\by)\le M$. Then
\[
\norm{K\bx-\by}_Y\le (2M)^{1/2},
\qquad
\norm{\Delta_\bz \bx}_q\le \left(\frac{qM}{\alpha}\right)^{1/q}.
\]
Hence $\norm{K\bx}_Y\le (2M)^{1/2}+\norm{\by}_Y$. Applying
\Cref{eq:joint-coercivity} gives \Cref{eq:sublevel-bound}. Thus the sublevel
sets are bounded with a bound independent of $N$ and $\bz$. Since $X$ is
finite-dimensional and $\Phi_{\alpha,\bz}^q(\cdot;\by)$ is continuous, a
minimizer exists; see, for example, \cite[Proposition 11.15]{bauschke2017convex}.

For uniqueness when $q>1$, let $\bx_1\ne\bx_2$ and set
$\bu\coloneqq\bx_1-\bx_2$. By \Cref{eq:joint-coercivity}, $K\bu$ and
$\Delta_\bz\bu$ cannot both vanish. If $K\bu\ne0$, the data term is strictly
convex on the segment joining $\bx_1$ and $\bx_2$. If $\Delta_\bz\bu\ne0$, then
$\norm{\Delta_\bz \cdot}_q^q$ is strictly convex for $q>1$, so the regularization term is
strictly convex on that segment. Therefore
$\Phi_{\alpha,\bz}^q(\cdot;\by)$ is strictly convex along every nontrivial
segment. Uniqueness then follows.
\end{proof}

\begin{corollary}
\label{cor:one-step-stability}
Assume \Cref{hyp:joint-coercivity}, and fix $q\in[1,2]$ and $\alpha>0$,
with $\Phi^q_{\alpha,\bz}$ and $T^q_{\alpha,\bz}$ as in~\Cref{eq:fixed-graph-functional}. Let
\[
\by_m\to\by \;\text{ in } Y,
\qquad
\norm{\bz_m-\bz}_\infty\to 0,
\]
and choose any
\[
\bx_m\in T^q_{\alpha,\bz_m}(\by_m).
\]
Then the following hold.

\begin{enumerate}[(i)]
\item The family $(\bx_m)$ is bounded uniformly in $N$, every cluster point
of $(\bx_m)$ is a minimizer of $\Phi_{\alpha,\bz}^q(\cdot;\by)$, and consequently
$(\bx_m)$ has a subsequence converging in $X$ to such a minimizer.
\item If $q>1$, the minimizer is unique, so the operator in
\Cref{eq:fixed-graph-functional-b} is single-valued, $\bx_m=T^q_{\alpha,\bz_m}(\by_m)$,
and the whole sequence converges,
\[
\bx_m\to T^q_{\alpha,\bz}(\by)\quad\text{in }X.
\]
\end{enumerate}
\end{corollary}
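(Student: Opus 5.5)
The plan is the standard stability argument for variational regularization (a uniform bound, then pass to the limit in the minimality inequality), using \Cref{prop:existence} for boundedness and \Cref{cor:R-bounds} for the dependence of the regularizer on the reference image.

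\emph{Step 1: uniform bound.} Comparing $\bx_m$ with the competitor $\boldsymbol 0$ gives
\[
\Phi_{\alpha,\bz_m}^q(\bx_m;\by_m)\le\Phi_{\alpha,\bz_m}^q(\boldsymbol 0;\by_m)=\tfrac12\norm{\by_m}_Y^2 .
\]
Since $\by_m\to\by$, the right-hand side is bounded by some $M$ that does not depend on $N$ or $\bz_m$. Then \Cref{eq:sublevel-bound} bounds $\norm{\bx_m}_2$ uniformly in $N$. Because $X$ is finite dimensional at each fixed level $N$, Bolzano--Weierstrass yields convergent subsequences, so cluster points exist.

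\emph{Step 2: cluster points are minimizers.} Let $\bx_{m_k}\to\bar\bx$ and fix any $\bx\in X$. Minimality gives
\[
\Phi_{\alpha,\bz_{m_k}}^q(\bx_{m_k};\by_{m_k})\le\Phi_{\alpha,\bz_{m_k}}^q(\bx;\by_{m_k}).
\]
We pass to the limit term by term:
\begin{itemize}
\item The data terms converge by continuity of $K$ together with $\by_{m_k}\to\by$.
\item On the left, $\Rreg_{\bz_{m_k}}^q(\bx_{m_k})\to\Rreg_\bz^q(\bar\bx)$ by the joint continuity \Cref{eq:R-continuity}.
\item On the right, $\Rreg_{\bz_{m_k}}^q(\bx)\to\Rreg_\bz^q(\bx)$ by \Cref{eq:R-Lipschitz-guide}.
\end{itemize}
This gives $\Phi_{\alpha,\bz}^q(\bar\bx;\by)\le\Phi_{\alpha,\bz}^q(\bx;\by)$ for every $\bx$, so $\bar\bx$ is a minimizer. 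This proves (i).

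\emph{Step 3: $q>1$.} Uniqueness of the minimizer comes from \Cref{prop:existence}, so $T^q_{\alpha,\bz}$ is single-valued. For convergence of the whole sequence we argue by contradiction. Suppose $\bx_m\not\to T^q_{\alpha,\bz}(\by)$. Then some subsequence stays at distance at least $\varepsilon>0$ from $T^q_{\alpha,\bz}(\by)$. That subsequence is bounded by Step 1, so it has a further convergent subsequence. By Step 2 its limit is a minimizer, hence equal to $T^q_{\alpha,\bz}(\by)$, which contradicts the $\varepsilon$-separation.

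\emph{Main obstacle.} The only delicate point is the continuity of the regularizer in the reference image. Here it is supplied entirely by the $\norm{\cdot}_\infty$-Lipschitz estimate of \Cref{cor:R-bounds}, which needs the hypothesis $\norm{\bz_m-\bz}_\infty\to0$ rather than convergence in a weaker norm. Everything else is routine.
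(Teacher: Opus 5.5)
Your proposal is correct and follows essentially the same route as the paper. You obtain a uniform bound by testing against $\boldsymbol 0$ and applying \Cref{eq:sublevel-bound}, pass to the limit in the minimality inequality via \Cref{cor:R-bounds}, and for $q>1$ your contradiction argument is just an explicit version of the paper's observation that every subsequence has a further subsequence converging to the unique minimizer.
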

\begin{proof}
By minimality, testing against $\bx=0$ in~\Cref{eq:fixed-graph-functional-a} gives
\[
\Phi_{\alpha,\bz_m}^q(\bx_m;\by_m)
\le \Phi_{\alpha,\bz_m}^q(0;\by_m)
=\tfrac12\norm{\by_m}_Y^2 .
\]
Since $(\by_m)$ converges it is bounded, so the right-hand side is bounded
uniformly in $m$, and \Cref{prop:existence} yields a bound on
$(\bx_m)$ independent of both $m$ and $N$. As $X$ is finite-dimensional,
$(\bx_m)$ therefore admits cluster points. Let $\bx_{m_k}\to\bar\bx$ in $X$.
For every fixed $\bx\in X$, minimality of $\bx_{m_k}$ gives
\[
\Phi_{\alpha,\bz_{m_k}}^q(\bx_{m_k};\by_{m_k})
\le \Phi_{\alpha,\bz_{m_k}}^q(\bx;\by_{m_k}).
\]
The data-fidelity term in~\Cref{eq:fixed-graph-functional-a} is continuous in
$(\bx,\by)$, and by \Cref{cor:R-bounds} the map
$\bz\mapsto\Rreg_\bz^q(\bu)$ is continuous; passing to the limit along $k$ on both
sides (using $\by_{m_k}\to\by$ and $\bz_{m_k}\to\bz$) yields
\[
\Phi_{\alpha,\bz}^q(\bar\bx;\by)
\le \Phi_{\alpha,\bz}^q(\bx;\by)
\qquad\forall\,\bx\in X .
\]
Hence $\bar\bx$ minimizes $\Phi_{\alpha,\bz}^q(\cdot;\by)$, which proves~(i).

If $q>1$, \Cref{prop:existence} shows each functional is strictly
convex, hence has a unique minimizer; the $\operatorname*{argmin}$
in~\Cref{eq:fixed-graph-functional-b} is then single-valued, so
$\bx_m=T^q_{\alpha,\bz_m}(\by_m)$ and the limit problem has the unique minimizer
$T^q_{\alpha,\bz}(\by)$. By~(i), every subsequence of $(\bx_m)$ has a further
subsequence converging to this same limit, so by standard topological arguments the whole sequence converges,
which proves~(ii).
\end{proof}

\section{Iterated graph Laplacian regularization schemes}\label{sec:iterschemes}
In this section, we formally introduce the new iterative schemes based on the graph Laplacian defined in \Cref{sec:graph_theory}. These algorithms include the standard graph-based update, the error-equation strategy, and the mixed iterative method combining the previous two.

Since $T^q_{\alpha,\bz}$ is set-valued when minimizers are not unique, each time we write $\bu\in T^q_{\alpha,\bz}(\by)$ we mean that one admissible minimizer is selected.

\subsection{Standard iterative scheme}
Given a preliminary reconstruction $\bx_0^\delta \in X$ and a positive sequence $\{\alpha_n\}_{n\ge1}$, we define the iterative scheme
\begin{equation}\label{eq:standard-iter}
\bx_n^\delta \in T^q_{\alpha_n,\bx_{n-1}^\delta}(\by^\delta),
\qquad n \geq 1.
\end{equation}
Equivalently, at each iteration $\bx_n^\delta$ minimizes
\[
\frac12\norm{K\bx-\by^\delta}_Y^2+\alpha_n\Rreg_{\bx_{n-1}^\delta}^q(\bx)
\qquad\text{over }\bx\in X.
\]
Thus, the graph Laplacian operator is recomputed from the previous iterate $\bx_{n-1}^\delta$, and the regularization functional varies with $n$. The proposed strategy is described in \Cref{alg:standard-iter}.

\begin{algorithm}[h]
\caption{Standard iterative scheme}\label{alg:standard-iter}
\begin{algorithmic}[1]
\REQUIRE $K$, $\by^\delta$, $\bx_0^\delta$, $\{\alpha_n\}_{n\ge1}$
\FOR{$n = 1,2,\ldots$}
\STATE $\bx_n^\delta \in T^q_{\alpha_n,\bx_{n-1}^\delta}(\by^\delta)$
\ENDFOR
\end{algorithmic}
\end{algorithm}

The rationale behind this strategy is as follows. The graph Laplacian can effectively encode prior information about the ground truth solution provided that the reconstruction used to build it is a sufficiently accurate approximation of $\gt$. Even when the initial reconstruction $\bx_0^\delta$ is a poor approximation, the first iterate $\bx_1^\delta$ is expected to improve upon it. Consequently, constructing the graph Laplacian using $\bx_1^\delta$ yields a linear operator that is better suited as a regularizer. As the iterations proceed, the graph Laplacian progressively incorporates increasingly accurate structural information about the solution, leading to more effective regularization and, ultimately, to an improved final reconstruction.

\subsection{Error-equation iterative scheme}\label{sec:erreq}
A second approach is based on iterative refinement through the error equation. Starting from an initial reconstruction $\bx_0^\delta$, one may write a correction $\bh=\bx-\bx_0^\delta$. Then
\[
K\bh = \by^\delta-K\bx_0^\delta\eqqcolon \br_0^\delta
\]
is the residual equation for the first correction. In variational form, this leads to
\[
\underset{\bh\in X}{\operatorname{argmin}}
\left\{
\tfrac12\norm{K\bh-\br_0^\delta}_Y^2
+\alpha_1\Rreg_{\bh_0^\delta}^q(\bh)
\right\},
\]
where $\bh_0^\delta\in X$ is a prescribed image-space approximation of the first error, used only to construct the graph Laplacian in the first error step.

Once the data fidelity term is expressed through the error equation $K\bh=\br$, the graph Laplacian should be constructed from an image carrying information about the expected error rather than from an approximation of the true solution itself. The error-equation iteration therefore updates both the reconstruction and the error graph input. With
\[
\br_n^\delta\coloneqq\by^\delta-K\bx_n^\delta,
\]
the scheme is
\[
\left\{
\begin{aligned}
\bh_{n+1}^\delta
&\in T^q_{\alpha_{n+1},\bh_n^\delta}(\br_n^\delta),\\
\bx_{n+1}^\delta
&=\bx_n^\delta+\bh_{n+1}^\delta,\\
\br_{n+1}^\delta
&=\br_n^\delta-K\bh_{n+1}^\delta,
\end{aligned}
\right.
\qquad n\ge0.
\]
The algorithm is summarized in \Cref{alg:error-iter}.

\begin{algorithm}[h!]
\caption{Error-equation iterative scheme}\label{alg:error-iter}
\begin{algorithmic}[1]
\REQUIRE $K$, $\by^\delta$, $\bx_0^\delta$, $\bh_0^\delta$, $\{\alpha_n\}_{n\ge1}$
\STATE $\br_0^\delta=\by^\delta-K\bx_0^\delta$
\FOR{$n = 0,1,2,\ldots$}
\STATE $\bh_{n+1}^\delta \in T^q_{\alpha_{n+1},\bh_n^\delta}(\br_n^\delta)$
\STATE $\bx_{n+1}^\delta=\bx_n^\delta+\bh_{n+1}^\delta$
\STATE $\br_{n+1}^\delta=\br_n^\delta-K\bh_{n+1}^\delta$
\ENDFOR
\end{algorithmic}
\end{algorithm}

\subsection{Mixed iterative scheme}\label{sec:mix}
The mixed iterative scheme combines the standard approach in~\Cref{alg:standard-iter} with the error-equation strategy in~\Cref{alg:error-iter}. The motivation for combining these two approaches lies in their complementary behavior. Empirically, after a few iterations of the standard scheme, we observe that the performance metrics reach a plateau, meaning that further iterations do not lead to any significant improvement in the reconstruction quality. This phenomenon can be interpreted as an indication that the graph Laplacian, constructed from the current iterate, is no longer able to extract additional structural information about the ground truth solution $\gt$. Once this plateau is reached, switching to the error-based formulation can further improve the reconstruction by recovering additional details.

\begin{algorithm}[t]
\caption{Mixed iterative scheme}\label{alg:mixed-iter}
\begin{algorithmic}[1]
\REQUIRE $K$, $\by^\delta$, $\bx_0^\delta$, $\{\alpha_n\}_{n\ge1}$, $n_{\mathrm{std}}$
\STATE \textbf{Standard phase}
\FOR{$n = 1$ \TO $n_{\mathrm{std}}$}
\STATE $\bx_n^\delta \in T^q_{\alpha_n,\bx_{n-1}^\delta}(\by^\delta)$
\ENDFOR
\STATE $\br_{n_{\mathrm{std}}}^\delta=\by^\delta-K\bx_{n_{\mathrm{std}}}^\delta$
\STATE Choose an image-space initialization $\bg_0^\delta$
\STATE \textbf{Error phase}
\FOR{$k = 0,1,2,\ldots$}
\STATE $\bg_{k+1}^\delta\in T^q_{\alpha_{n_{\mathrm{std}}+k+1},\bg_k^\delta}(\br_{n_{\mathrm{std}}+k}^\delta)$
\STATE $\bx_{n_{\mathrm{std}}+k+1}^\delta=\bx_{n_{\mathrm{std}}+k}^\delta+\bg_{k+1}^\delta$
\STATE $\br_{n_{\mathrm{std}}+k+1}^\delta=\br_{n_{\mathrm{std}}+k}^\delta-K\bg_{k+1}^\delta$
\ENDFOR
\end{algorithmic}
\end{algorithm}

More precisely, fix the number of standard iterations
$n_{\mathrm{std}}\in\mathbb{N}_{>0}$. We first run $n_{\mathrm{std}}$ steps
of~\Cref{alg:standard-iter}, producing $\bx_{n_{\mathrm{std}}}^\delta$, and then
continue with the error-equation iteration of~\Cref{alg:error-iter} initialized at
this point. We set the residual
\[
\br_{n_{\mathrm{std}}}^\delta\coloneqq\by^\delta-K\bx_{n_{\mathrm{std}}}^\delta ,
\]
and choose an initial error-stage graph input $\bg_0^\delta\in X$.
Since this choice is problem-dependent, in the numerical experiments we use the
following strategies:
\begin{itemize}
    \item \emph{Deblurring problem.} Since the residual belongs to the same image
    space as the reconstruction, we choose
    \begin{equation}\label{eq:g0res}
    \bg_0^\delta=\br_{n_{\mathrm{std}}}^\delta
    =\by^\delta-K\bx_{n_{\mathrm{std}}}^\delta .
    \end{equation}
    \item \emph{CT problem.} We apply filtered backprojection (FBP) to the
    residual $\br_{n_{\mathrm{std}}}^\delta$ to obtain an image-space approximation
    of the error.
    \item \emph{General approach.} A problem-independent alternative computes the
    initialization via Tikhonov regularization,
    \[
    \bg_0^\delta \in
    \underset{\bw \in X}{\operatorname{argmin}}
    \left\{
    \tfrac{1}{2}\norm{K\bw-\br_{n_{\mathrm{std}}}^\delta}_Y^2
    + \lambda \norm{\bw}_2^2
    \right\},
    \]
    where $\lambda>0$ can be selected by an automatic rule such as generalized
    cross-validation (GCV).
\end{itemize}
Finally, we perform a prescribed number of error-equation steps as
in~\Cref{alg:error-iter}, building the graph Laplacian from
$\bg_0^\delta$ at the first error step. Usually a single iteration
suffices to recover a few additional sharp edges. The overall procedure is summarized in~\Cref{alg:mixed-iter}.

\section{Regularization properties of \texorpdfstring{\Cref{alg:standard-iter}}{the standard iterative scheme}}\label{sec:convergence_standard}
In this section, we present the theoretical results concerning the standard iterative scheme.

\subsection{Finite-iteration stability}
We now show that the one-step stability of \Cref{cor:one-step-stability}
propagates, by induction, through any fixed number of iterations of the standard
scheme: if the data and the initial reconstruction
converge, so do the first $J$ iterates, with bounds independent of the
discretization level $N$. 
\begin{theorem}\label{thm:finite-stability}
Assume \Cref{hyp:joint-coercivity}. Fix $J\in\N$ and positive parameters $\alpha_1,\ldots,\alpha_J$. Let
\[
\by_m\to \by \quad\text{in }Y,
\qquad
\bx_{0,m}\to \bx_0 \quad\text{in }\norm{\cdot}_\infty.
\]
For each $m$, let $\bx_{n,m}$ be generated by \Cref{eq:standard-iter} with data $\by_m$ and initial point $\bx_{0,m}$, for $n=1,\ldots,J$.

Then the family of all such iterates is bounded, with bounds independent of $N$. Moreover, every subsequence indexed by $(m_k)$ has a further subsequence indexed by $(m_{k_j})$ such that, for each $n=1,\ldots,J$,
\[
\bx_{n,m_{k_j}}\to \bar \bx_n,
\]
where $(\bar \bx_n)_{n=1}^J$ is a sequence generated by the exact limiting data $\by$ and initial point $\bx_0$, namely
\[
\bar \bx_n\in\operatorname*{argmin}_{\bx\in X}
\left\{
\frac12\norm{K\bx-\by}_Y^2
+\alpha_n\Rreg_{\bar \bx_{n-1}}^q(\bx)
\right\},
\qquad \bar \bx_0=\bx_0.
\]
If the minimizer is unique at every limiting step, in particular if $q>1$, then the whole sequence converges:
\[
\bx_{n,m}\to \bar \bx_n,
\qquad n=1,\ldots,J.
\]
\end{theorem}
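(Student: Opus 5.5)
The argument is an induction on $n=1,\ldots,J$, applying \Cref{cor:one-step-stability} at each step. Because $X$ is finite-dimensional, convergence in $X$ implies convergence in $\norm{\cdot}_\infty$. So a convergent sequence of iterates at step $n-1$ is an admissible sequence of reference images for step $n$.

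\emph{Boundedness.} Test each step against $\bx=0$:
\[
\Phi^q_{\alpha_n,\bx_{n-1,m}}(\bx_{n,m};\by_m)\le\tfrac12\norm{\by_m}_Y^2\le\tfrac12\sup_m\norm{\by_m}_Y^2\eqqcolon M.
\]
\Cref{prop:existence} then gives
\[
\norm{\bx_{n,m}}_2\le C_J\bigl((2M)^{1/2}+\sup_m\norm{\by_m}_Y+(qM/\alpha_n)^{1/q}\bigr).
\]
This bound does not involve the reference image, so it holds for every $n\le J$ and every $m$, independently of $N$. Taking the minimum of $\alpha_1,\ldots,\alpha_J$ gives a single bound for the whole family.

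\emph{Subsequence extraction.} Start from an arbitrary subsequence $(m_k)$. For $n=1$ the reference images $\bx_{0,m_k}\to\bx_0$ in $\norm{\cdot}_\infty$ and $\by_{m_k}\to\by$. By \Cref{cor:one-step-stability}(i), a further subsequence has $\bx_{1,\cdot}\to\bar\bx_1\in T^q_{\alpha_1,\bx_0}(\by)$. Now suppose we have a nested subsequence along which $\bx_{n-1,\cdot}\to\bar\bx_{n-1}$, with $\bar\bx_1,\ldots,\bar\bx_{n-1}$ generated by the limit recursion. Convergence in $X$ gives convergence in $\norm{\cdot}_\infty$ by finite-dimensional norm equivalence. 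Applying \Cref{cor:one-step-stability}(i) again, with references $\bx_{n-1,\cdot}$ and the same data, yields a further subsequence along which $\bx_{n,\cdot}\to\bar\bx_n\in T^q_{\alpha_n,\bar\bx_{n-1}}(\by)$. After $J$ nested extractions, which is finitely many, one common subsequence $(m_{k_j})$ works for all $n$ at once. Passing to sub-subsequences preserves the earlier limits.

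\emph{Full convergence.} Assume the limit minimizer is unique at each step, as is automatic for $q>1$ by \Cref{prop:existence}. Then the limiting chain $(\bar\bx_n)$ is uniquely determined by $\bx_0$ and $\by$. Every subsequence therefore has a further subsequence converging to the same limits, so the standard subsequence argument gives $\bx_{n,m}\to\bar\bx_n$ for each $n$. For $q>1$ one can also iterate \Cref{cor:one-step-stability}(ii) directly.

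\emph{Main obstacle.} The delicate point is the induction step when $q=1$. There the limit $\bar\bx_{n-1}$ depends on the chosen subsequence, and the limit problem at step $n$ uses the reference $\bar\bx_{n-1}$. This forces the nested extraction described above rather than a single application of the corollary. The other ingredient is the use of $\norm{\cdot}_\infty$ continuity of $\bz\mapsto\Rreg^q_\bz$ from \Cref{cor:R-bounds}, which is justified by the norm equivalence at fixed $N$.
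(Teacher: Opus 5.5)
Your proposal is correct and follows essentially the same route as the paper: induction on $n$, applying \Cref{cor:one-step-stability} with references $\bx_{n-1,m_k}\to\bar\bx_{n-1}$ (in $\norm{\cdot}_\infty$ at fixed $N$), followed by the unique-cluster-point argument for convergence of the whole sequence. Your explicit bound obtained by testing against $\bx=0$ and your careful nested extraction of subsequences simply spell out steps that the paper's proof leaves implicit.
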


\begin{proof}
We argue by induction on $n$. For $n=1$, the claim is exactly \Cref{cor:one-step-stability}, applied with data $\by_m\to\by$, reference images $\bz_m=\bx_{0,m}$, and limit reference $\bz=\bx_0$.

Now assume the claim holds for $n-1$, so along any convergent subsequence $\bx_{n-1,m_k}\to\bar\bx_{n-1}$. Apply \Cref{cor:one-step-stability} once more, this time with reference images $\bz_{m_k}=\bx_{n-1,m_k}$ and limit reference $\bz=\bar\bx_{n-1}$. It follows that the sequence $\bx_{n,m_k}$ is bounded, and every cluster point of it solves the $n$-th problem in the limit.

So along every subsequence, the cluster points are minimizers. This proves the stability statement. Moreover, if the limiting problem has a unique minimizer at each step, there is only one possible cluster point, so the whole sequence converges.
\end{proof}

\subsection{Convergence for noisy data}

Classical regularization theory identifies the noise-free limit of the
regularized solutions with a single minimum-regularization solution, and
whole-sequence convergence is recovered only under a uniqueness assumption on
that solution. Here the situation is different. The regularizer
$\Rreg_\bz^q$ carries a reference point $\bz$ that is itself produced by the
iteration. Therefore, distinct
subsequences may converge to distinct reference images $\bar\bx$, and each reference image
can select a different minimizer. The natural limit object is thus not a
point but the set $\mathcal S_{\rm gm}(\by)$ of graph-minimizing solutions
introduced in \Cref{def:Sgm}.

Accordingly, our main convergence statement is set-valued: the iterates stopped at $n=n(\delta)$
approach this set,
\[
\operatorname{dist}_2\bigl(\bx_{n(\delta)}^\delta,\mathcal S_{\rm gm}(\by)\bigr)
\to0
\qquad\text{as }\delta\downarrow0,
\]
where
\[
\operatorname{dist}_2(\bx,U)
\coloneqq
\inf_{\bu\in U}\norm{\bx-\bu}_2,
\]
with no uniqueness assumption and no source condition. It reduces to the usual
single-point convergence exactly when $\mathcal S_{\rm gm}(\by)$ is a singleton,
for instance when $K$ is injective. 

Before proceeding further with the convergence result, we need a couple of preliminary estimates.

\medskip
\noindent\textit{A priori estimates.}
Assume \Cref{hyp:consistency,hyp:joint-coercivity}, and let
$\norm{\by^\delta-\by}_Y\le\delta$. Let $(\bx_n^\delta)$ be generated by
\Cref{eq:standard-iter}. With $\gt\in\Ssol(\by)$ the bounded ground truth from
\Cref{hyp:consistency}, the inequality~\cref{eq:R-uniform-bound} gives
\begin{equation*}
\Rreg_{\bz}^q(\gt)\le C_0,
\qquad
C_0\coloneqq\frac{(2d_R)^q}{q}M_{\rm gt}^q ,
\qquad \bz\in X .
\end{equation*}
Therefore, by minimality of $\bx_n^\delta$, for every $n\ge1$,
\begin{equation*}
\begin{aligned}
\frac12\norm{K\bx_n^\delta-\by^\delta}_Y^2
+\alpha_n\Rreg_{\bx_{n-1}^\delta}^q(\bx_n^\delta)
&\le
\frac12\norm{K\gt-\by^\delta}_Y^2
+\alpha_n\Rreg_{\bx_{n-1}^\delta}^q(\gt)\\
&\le \frac12\delta^2+\alpha_n C_0 .
\end{aligned}
\end{equation*}
Consequently,
\begin{equation}\label{eq:noisy-res-bound}
\norm{K\bx_n^\delta-\by^\delta}_Y^2
\le \delta^2+2\alpha_n C_0,
\end{equation}
and
\begin{equation}\label{eq:noisy-reg-bound}
\Rreg_{\bx_{n-1}^\delta}^q(\bx_n^\delta)
\le C_0+\frac{\delta^2}{2\alpha_n}.
\end{equation}

\begin{theorem}
\label{thm:standard-vanishing-noise}
Assume \Cref{hyp:consistency,hyp:joint-coercivity}.
Let $(\alpha_n)$ be nonincreasing, $\alpha_n\downarrow0$, and let the stopping
index $n(\delta)$ be chosen so that
\begin{equation}\label{eq:apriori-rule}
\alpha_{n(\delta)}\to0,
\qquad
\frac{\delta^2}{\alpha_{n(\delta)}}\to0
\qquad\text{as }\delta\downarrow0.
\end{equation}
Let $\norm{\by^\delta-\by}_Y\le\delta$, and let $(\bx_n^\delta)$ be generated by
\Cref{eq:standard-iter} with data $\by^\delta$. Then
\begin{equation}\label{eq:apriori-residual-convergence}
\norm{K\bx_{n(\delta)}^\delta-\by}_Y\to0
\qquad\text{as }\delta\downarrow0.
\end{equation}
Moreover, the family $(\bx_{n(\delta)}^\delta)$ is bounded in $X$, with
bounds independent of $N$, and
\begin{equation}\label{eq:set-convergence-Sgm}
\operatorname{dist}_2
\bigl(\bx_{n(\delta)}^\delta,\mathcal S_{\rm gm}(\by)\bigr)
\to0
\qquad\text{as }\delta\downarrow0.
\end{equation}
In particular,
\begin{equation}\label{eq:set-convergence-S}
\operatorname{dist}_2
\bigl(\bx_{n(\delta)}^\delta,\Ssol(\by)\bigr)
\to0
\qquad\text{as }\delta\downarrow0.
\end{equation}
\end{theorem}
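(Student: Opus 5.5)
The plan has three parts: the residual bound and boundedness follow directly from the a priori estimates \cref{eq:noisy-res-bound,eq:noisy-reg-bound}, and the set convergence \cref{eq:set-convergence-Sgm} is proved by contradiction using compactness in the fixed finite-dimensional space $X$.

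\emph{Residual.} By the triangle inequality and \cref{eq:noisy-res-bound},
\[
\norm{K\bx_{n(\delta)}^\delta-\by}_Y\le \norm{K\bx_{n(\delta)}^\delta-\by^\delta}_Y+\delta\le \bigl(\delta^2+2\alpha_{n(\delta)}C_0\bigr)^{1/2}+\delta,
\]
which tends to zero by \cref{eq:apriori-rule}. This gives \cref{eq:apriori-residual-convergence}.

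\emph{Boundedness.} Since $\delta^2/\alpha_{n(\delta)}\to0$, the right-hand side of \cref{eq:noisy-reg-bound} is bounded by, say, $C_0+1$ for small $\delta$. Hence $\norm{\Delta_{\bx_{n(\delta)-1}^\delta}\bx_{n(\delta)}^\delta}_q\le (q(C_0+1))^{1/q}$. The residual bound gives $\norm{K\bx_{n(\delta)}^\delta}_Y\le \norm{\by}_Y+1\le M_KM_{\rm gt}+1$. The joint coercivity \cref{eq:joint-coercivity}, applied with reference $\bz=\bx_{n(\delta)-1}^\delta$, then bounds $\norm{\bx_{n(\delta)}^\delta}_2$ by a constant depending only on $C_J,M_K,M_{\rm gt},C_0,q$, hence independent of $N$.

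I also need the same control on the previous iterate $\bx_{n(\delta)-1}^\delta$, which serves as the graph input. Because $\alpha_n>0$ and $\alpha_{n(\delta)}\to0$, we have $n(\delta)\to\infty$, so $n(\delta)\ge2$ for small $\delta$ and the previous iterate is never the arbitrary initialization $\bx_0^\delta$. Monotonicity gives $\alpha_{n(\delta)-1}\ge\alpha_{n(\delta)}$, so $\delta^2/\alpha_{n(\delta)-1}\to0$ as well. Since $n(\delta)-1\to\infty$ and $\alpha_n\downarrow0$, also $\alpha_{n(\delta)-1}\to0$. Therefore the same two estimates, applied at index $n(\delta)-1$, show that $\bx_{n(\delta)-1}^\delta$ is bounded and that $\norm{K\bx_{n(\delta)-1}^\delta-\by}_Y\to0$.

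\emph{Set convergence.} Suppose \cref{eq:set-convergence-Sgm} fails. Then there are $\varepsilon>0$ and $\delta_k\downarrow0$ with $\operatorname{dist}_2(\bx_{n(\delta_k)}^{\delta_k},\mathcal S_{\rm gm}(\by))\ge\varepsilon$. At a fixed discretization $N$ the space $X$ is finite-dimensional, so all norms are equivalent and I can extract a subsequence along which both $\bx_{n(\delta_k)}^{\delta_k}\to\bar\bx$ and $\bx_{n(\delta_k)-1}^{\delta_k}\to\bar\bz$, the latter in $\norm{\cdot}_\infty$. The two residual limits give $\bar\bx,\bar\bz\in\Ssol(\by)$.

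For any $\bs\in\Ssol(\by)$, testing minimality of $\bx_{n}^\delta$ against $\bs$, exactly as in the a priori estimates, gives
\[
\Rreg^q_{\bx_{n(\delta)-1}^\delta}\bigl(\bx_{n(\delta)}^\delta\bigr)\le \frac{\delta^2}{2\alpha_{n(\delta)}}+\Rreg^q_{\bx_{n(\delta)-1}^\delta}(\bs).
\]
Passing to the limit with the joint continuity \cref{eq:R-continuity} yields $\Rreg^q_{\bar\bz}(\bar\bx)\le\Rreg^q_{\bar\bz}(\bs)$ for every $\bs\in\Ssol(\by)$. Since $\bar\bz\in\Ssol(\by)$, this means $\bar\bx\in\mathcal S_{\rm gm}(\by)$, which contradicts the choice of the sequence $\delta_k$. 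Finally, \cref{eq:set-convergence-S} follows because $\mathcal S_{\rm gm}(\by)\subseteq\Ssol(\by)$.

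The main obstacle is ensuring that the limiting reference $\bar\bz$ is itself an exact solution, since that is what membership in $\mathcal S_{\rm gm}(\by)$ requires. This is why I use the monotonicity of $(\alpha_n)$ and the fact that $n(\delta)\to\infty$: together they transfer the residual and regularizer bounds from index $n(\delta)$ to index $n(\delta)-1$.
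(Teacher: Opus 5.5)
Your proposal is correct and follows essentially the same route as the paper's proof. Both bound the residual at $n(\delta)$, use $n(\delta)\to\infty$ and the monotonicity of $(\alpha_n)$ to carry the residual and regularizer bounds over to the predecessor index, and get $N$-independent bounds from joint coercivity. Both then conclude by contradiction, extracting convergent subsequences of $(\bx_{n(\delta)}^\delta,\bx_{n(\delta)-1}^\delta)$ and passing to the limit in the minimality inequality with the joint continuity of $\Rreg^q$.
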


\begin{proof}
By \Cref{rem:Sgm} the set $\mathcal S_{\rm gm}(\by)$ is nonempty, so the distance
in \Cref{eq:set-convergence-Sgm} is well defined.

Set
\[
m_\delta\coloneqq n(\delta).
\]
From \Cref{eq:apriori-rule} and $\alpha_n\downarrow0$ we have
$m_\delta\to\infty$ as $\delta\downarrow0$. Moreover,
\[
\alpha_{m_\delta-1}\to0,
\qquad
\frac{\delta^2}{\alpha_{m_\delta-1}}
\le
\frac{\delta^2}{\alpha_{m_\delta}}
\to0 ,
\]
where the inequality uses the monotonicity of $(\alpha_n)$.

By \Cref{eq:noisy-res-bound}, applied with $n=m_\delta$,
\[
\norm{K\bx_{m_\delta}^\delta-\by}_Y
\le
\norm{K\bx_{m_\delta}^\delta-\by^\delta}_Y
+\norm{\by^\delta-\by}_Y
\le
\bigl(\delta^2+2\alpha_{m_\delta}C_0\bigr)^{1/2}
+\delta .
\]
Therefore
\[
\norm{K\bx_{m_\delta}^\delta-\by}_Y\to0 ,
\]
which proves \Cref{eq:apriori-residual-convergence}. The same argument with
$n=m_\delta-1$ gives
\begin{equation}\label{eq:predecessor-residual-convergence}
\norm{K\bx_{m_\delta-1}^\delta-\by}_Y\to0 .
\end{equation}

Next, \Cref{eq:noisy-reg-bound}, used with $n=m_\delta$ and $n=m_\delta-1$,
gives uniform bounds on
\[
\Rreg_{\bx_{m_\delta-1}^\delta}^q(\bx_{m_\delta}^\delta),
\qquad
\Rreg_{\bx_{m_\delta-2}^\delta}^q(\bx_{m_\delta-1}^\delta).
\]
Together with \Cref{eq:apriori-residual-convergence},
\Cref{eq:predecessor-residual-convergence}, and the joint coercivity condition
\Cref{eq:joint-coercivity}, this gives uniform bounds on
$\bx_{m_\delta}^\delta$ and $\bx_{m_\delta-1}^\delta$, with constants
independent of $N$.

We now prove \Cref{eq:set-convergence-Sgm}. Suppose, by contradiction, that it
fails. Then there exist $\varepsilon>0$ and a sequence $\delta_j\downarrow0$
such that, with $m_j\coloneqq n(\delta_j)$,
\begin{equation}\label{eq:dist-contradiction}
\operatorname{dist}_2
\bigl(\bx_{m_j}^{\delta_j},\mathcal S_{\rm gm}(\by)\bigr)
\ge\varepsilon
\qquad\forall j .
\end{equation}
By the uniform bounds just proved, finite-dimensional compactness gives, after
passing to a subsequence,
\[
\bx_{m_j}^{\delta_j}\to\bx^\star,
\qquad
\bx_{m_j-1}^{\delta_j}\to\bar\bx .
\]
The residual convergence gives
\[
\bx^\star,\bar\bx\in\Ssol(\by).
\]

It remains to identify $\bx^\star$. Let $\bs\in\Ssol(\by)$ be arbitrary. By
minimality of $\bx_{m_j}^{\delta_j}$ and $K\bs=\by$,
\[
\Rreg_{\bx_{m_j-1}^{\delta_j}}^q(\bx_{m_j}^{\delta_j})
\le
\Rreg_{\bx_{m_j-1}^{\delta_j}}^q(\bs)
+\frac{\delta_j^2}{2\alpha_{m_j}} .
\]
Passing to the limit, using \Cref{cor:R-bounds} and
\Cref{eq:apriori-rule}, yields
\[
\Rreg_{\bar\bx}^q(\bx^\star)
\le
\Rreg_{\bar\bx}^q(\bs)
\qquad
\forall \bs\in\Ssol(\by).
\]
Thus
\[
\bx^\star\in
\operatorname*{argmin}_{\bs\in\Ssol(\by)}
\Rreg_{\bar\bx}^q(\bs).
\]
Since $\bar\bx\in\Ssol(\by)$, the definition \cref{eq:Sgm-def} gives
$\bx^\star\in\mathcal S_{\rm gm}(\by)$.

But then
\[
\operatorname{dist}_2
\bigl(\bx_{m_j}^{\delta_j},\mathcal S_{\rm gm}(\by)\bigr)
\le
\norm{\bx_{m_j}^{\delta_j}-\bx^\star}_2
\to0,
\]
contradicting \Cref{eq:dist-contradiction}. Hence
\Cref{eq:set-convergence-Sgm} holds. Since
$\mathcal S_{\rm gm}(\by)\subseteq\Ssol(\by)$,
\Cref{eq:set-convergence-S} follows immediately.
\end{proof}

\section{Regularization properties of \texorpdfstring{\Cref{alg:error-iter,alg:mixed-iter}}{the error-equation and mixed iterative schemes}}\label{sec:convergence_error}

We now analyze the error-equation and mixed iterations. Throughout this section, the graph regularizer is still
\(\Rreg_\bz^q\) from \Cref{def:graph-regularizer}, and the solution operator
\(T_{\alpha,\bz}^q\) is the one defined in
\Cref{eq:fixed-graph-functional}. As in \Cref{sec:iterschemes}, when
\(T_{\alpha,\bz}^q\) is set-valued, one admissible minimizer is selected.

For \Cref{alg:error-iter}, we use the residual convention
\[
\br_n^\delta \coloneqq \by^\delta-K\bx_n^\delta .
\]
Then the error-equation scheme can be written compactly as
\begin{equation}\label{eq:error-scheme-shifted}
\left\{
\begin{aligned}
\br_n^\delta &= \by^\delta-K\bx_n^\delta,\\
\bh_{n+1}^\delta
&\in T_{\alpha_{n+1},\bh_n^\delta}^q(\br_n^\delta),\\
\bx_{n+1}^\delta
&=\bx_n^\delta+\bh_{n+1}^\delta,
\end{aligned}
\right.
\qquad n\ge0,
\end{equation}
where \(\bh_0^\delta\in X\) is the prescribed image-space initialization used to
construct the first error graph. Equivalently,
\[
\br_{n+1}^\delta
=
\br_n^\delta-K\bh_{n+1}^\delta
=
\by^\delta-K\bx_{n+1}^\delta .
\]

\subsection{One correction step}\label{sec:error-one-step}

For a residual \(\br\in Y\), a reference \(\bz\in X\), and
\(\alpha>0\), the correction step is
\[
\bh\in T_{\alpha,\bz}^q(\br),
\]
that is,
\[
\bh\in
\operatorname*{argmin}_{\bu\in X}
\left\{
\frac12\norm{K\bu-\br}_Y^2+\alpha\Rreg_\bz^q(\bu)
\right\}.
\]
This is exactly the fixed-reference problem
\(\Phi_{\alpha,\bz}^q(\cdot;\br)\) from
\Cref{eq:fixed-graph-functional}.

\begin{proposition}\label{prop:error-one-step}
Assume \Cref{hyp:joint-coercivity}. Let \(q\in[1,2]\). For every
\(\br\in Y\), every \(\bz\in X\), and every \(\alpha>0\), the set
\(T_{\alpha,\bz}^q(\br)\) is nonempty. More precisely, if
\[
\Phi_{\alpha,\bz}^q(\bh;\br)\le M,
\]
then
\begin{equation}\label{eq:error-one-step-sublevel}
\norm{\bh}_2
\le
C_J\left(
(2M)^{1/2}+\norm{\br}_Y
+\left(\frac{qM}{\alpha}\right)^{1/q}
\right).
\end{equation}
The bound is independent of the reference image \(\bz\) and of the
discretization level \(N\). If \(q>1\), then \(T_{\alpha,\bz}^q(\br)\) is a
singleton.
\end{proposition}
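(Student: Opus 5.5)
This statement is \Cref{prop:existence} with the data $\by$ replaced by the residual $\br$ and the variable renamed $\bh$. The plan is therefore to reduce to that proposition, or to repeat its short argument with this relabeling, and to check that no step uses any special property of $\by$. The correction functional $\bu\mapsto\frac12\norm{K\bu-\br}_Y^2+\alpha\Rreg_\bz^q(\bu)$ is exactly $\Phi_{\alpha,\bz}^q(\cdot;\br)$ from \Cref{eq:fixed-graph-functional-a}. In \Cref{prop:existence}, $\by\in Y$ is arbitrary and only \Cref{hyp:joint-coercivity} is assumed; in particular \Cref{hyp:consistency} is not used. So the proposition applies verbatim with $\by=\br$.

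For completeness I would still spell out the sublevel estimate. Assume $\Phi_{\alpha,\bz}^q(\bh;\br)\le M$. Nonnegativity of both terms gives two bounds:
\[
\norm{K\bh-\br}_Y\le(2M)^{1/2},
\qquad
\norm{\Delta_\bz\bh}_q\le\left(\frac{qM}{\alpha}\right)^{1/q}.
\]
By the triangle inequality, $\norm{K\bh}_Y\le(2M)^{1/2}+\norm{\br}_Y$. Inserting both bounds into \Cref{eq:joint-coercivity} with $\bu=\bh$ yields \Cref{eq:error-one-step-sublevel}. The only constant involved is $C_J$, which \Cref{hyp:joint-coercivity} makes independent of $N$ and $\bz$, so the bound inherits that independence.

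Nonemptiness follows because the sublevel sets are bounded, and they are nonempty: testing at $\bu=0$ gives the level $M=\frac12\norm{\br}_Y^2$. The functional is continuous on the finite-dimensional space $X$, by continuity of $K$ and by \Cref{eq:Delta-uniform-bound} for the linear map $\Delta_\bz$. A minimizer therefore exists by the Weierstrass argument on a compact sublevel set.

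For $q>1$, I would reuse the strict convexity argument of \Cref{prop:existence}. For $\bh_1\ne\bh_2$, coercivity prevents $K(\bh_1-\bh_2)$ and $\Delta_\bz(\bh_1-\bh_2)$ from both vanishing. In the first case the quadratic data term is strictly convex along the segment. In the second case the map $\bv\mapsto\norm{\bv}_q^q$ is strictly convex for $q>1$, and it is composed with a linear map that is nonzero on the segment direction. Either way the functional is strictly convex along the segment, so $T^q_{\alpha,\bz}(\br)$ is a singleton. No step is a real obstacle. The one point to state carefully is that uniformity in $N$ and $\bz$ comes solely from \Cref{hyp:joint-coercivity} and not from any bound on $\br$, which here may be an arbitrary residual.
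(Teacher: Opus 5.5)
Your proposal is correct and follows the paper's proof, which simply applies \Cref{prop:existence} with data $\by=\br$ and minimization variable $\bx=\bh$. Your written-out sublevel estimate, the Weierstrass existence argument, and the strict-convexity uniqueness argument are the same as the proof of that proposition.
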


\begin{proof}
This is \Cref{prop:existence} applied with data \(\by=\br\) and minimization
variable \(\bx=\bh\). The estimate \cref{eq:error-one-step-sublevel} is exactly
\cref{eq:sublevel-bound} with this notation.
\end{proof}

\begin{proposition}\label{prop:error-one-step-stability}
Assume \Cref{hyp:joint-coercivity}. Fix \(q\in[1,2]\) and \(\alpha>0\). Let
\[
\br_m\to\br \quad\text{in }Y,
\qquad
\norm{\bz_m-\bz}_\infty\to0,
\]
and choose
\[
\bh_m\in T_{\alpha,\bz_m}^q(\br_m).
\]
Then \((\bh_m)\) is bounded. Every cluster point of \((\bh_m)\) belongs to
\(T_{\alpha,\bz}^q(\br)\). If the limiting minimizer is unique, in particular if
\(q>1\), then the whole sequence converges to that minimizer.
\end{proposition}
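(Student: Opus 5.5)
This statement is \Cref{cor:one-step-stability} written in the notation of the error equation. The data $\by$ is renamed to the residual $\br$, and the minimization variable is renamed to $\bh$. The plan is therefore to reduce to that corollary. To keep the argument self-contained, I would also retrace its three ingredients: a uniform bound, identification of cluster points, and the uniqueness upgrade.

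\textbf{Boundedness.} First, I would test minimality of $\bh_m$ against $\bu=0$. This gives
\[
\Phi_{\alpha,\bz_m}^q(\bh_m;\br_m)\le\tfrac12\norm{\br_m}_Y^2\le M,
\]
where $M\coloneqq\tfrac12\sup_m\norm{\br_m}_Y^2<\infty$ because $(\br_m)$ converges. \Cref{prop:error-one-step}, via \Cref{eq:error-one-step-sublevel}, then bounds $\norm{\bh_m}_2$. The bound depends only on $C_J$, $M$, $\alpha$, $q$ and $\sup_m\norm{\br_m}_Y$, so it is independent of $m$, of $\bz_m$ and of $N$.

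\textbf{Cluster points.} Since $X$ is finite-dimensional, cluster points exist. Let $\bh_{m_k}\to\bar\bh$. For any fixed $\bu\in X$, minimality gives
\[
\Phi_{\alpha,\bz_{m_k}}^q(\bh_{m_k};\br_{m_k})\le\Phi_{\alpha,\bz_{m_k}}^q(\bu;\br_{m_k}).
\]
I would pass to the limit term by term:
\begin{itemize}
\item the fidelity term $\tfrac12\norm{K\bu-\br}_Y^2$ is jointly continuous in $(\bu,\br)$;
\item the regularizer converges, $\Rreg_{\bz_{m_k}}^q(\bh_{m_k})\to\Rreg_\bz^q(\bar\bh)$, by \Cref{eq:R-continuity} of \Cref{cor:R-bounds}, since $\bz_{m_k}\to\bz$ in $\norm{\cdot}_\infty$;
\item on the right-hand side, where $\bu$ is fixed, \Cref{eq:R-Lipschitz-guide} gives $\Rreg_{\bz_{m_k}}^q(\bu)\to\Rreg_\bz^q(\bu)$.
\end{itemize}
Hence $\Phi_{\alpha,\bz}^q(\bar\bh;\br)\le\Phi_{\alpha,\bz}^q(\bu;\br)$ for all $\bu$, that is, $\bar\bh\in T_{\alpha,\bz}^q(\br)$.

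\textbf{Uniqueness upgrade.} If the limiting minimizer is unique, which holds in particular for $q>1$ by \Cref{prop:error-one-step}, then every subsequence of the bounded sequence $(\bh_m)$ has a further subsequence converging to that same point. The standard subsequence argument then gives convergence of the whole sequence.

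The only step needing care is the limit passage in the regularizer, because the reference $\bz_m$ and the argument $\bh_m$ vary simultaneously. This is precisely what the joint continuity statement \Cref{eq:R-continuity} handles, and it requires the uniform bound on $(\bh_m)$ from the first step. So the main effort is to establish that bound before taking limits.
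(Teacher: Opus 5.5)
Your proposal is correct and follows the paper's approach: the paper proves this proposition in one line, as \Cref{cor:one-step-stability} with $\by_m=\br_m$, $\by=\br$, $\bx_m=\bh_m$. Your retracing of that corollary's three ingredients (the bound obtained by testing against $0$, the identification of cluster points through the continuity of $\Rreg^q_{\bz}$ in $\bz$, and the subsequence-of-subsequence upgrade under uniqueness) matches its proof.
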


\begin{proof}
This is \Cref{cor:one-step-stability} with \(\by_m=\br_m\),
\(\by=\br\), and \(\bx_m=\bh_m\).
\end{proof}

\subsection{Finite-iteration stability of \texorpdfstring{\Cref{alg:error-iter}}{the error-equation scheme}}\label{sec:error-finite-stability}
As for the standard scheme, one-step stability propagates by induction through
any fixed number of iterations of the error-equation scheme
\cref{eq:error-scheme-shifted}. The difference is that now we need to track the pair
$(\bx_n,\bh_n)$ of current reconstruction and graph input reference, so convergence is
required not only of the data and the initial reconstruction but also of the
initial graph input $\bh_0$. Under these assumptions the first $J$ iterates
converge as well, with bounds independent of the discretization level $N$.
\begin{theorem}\label{thm:error-finite-stability}
Assume \Cref{hyp:joint-coercivity}. Fix \(J\in\N\) and positive parameters
\(\alpha_1,\ldots,\alpha_J\). Let
\[
\by_m\to \by \quad\text{in }Y,
\qquad
\bx_{0,m}\to \bx_0 \quad\text{in }\norm{\cdot}_2,
\qquad
\norm{\bh_{0,m}-\bh_0}_\infty\to0 .
\]
For each \(m\), let \((\bx_{n,m},\bh_{n,m})\) be generated by
\Cref{eq:error-scheme-shifted} with data \(\by_m\), initial reconstruction
\(\bx_{0,m}\), and initial graph input \(\bh_{0,m}\), for
\(n=0,\ldots,J\).

Then, at each fixed discretization level, the family
\[
\{\bx_{n,m}:0\le n\le J\}\cup\{\bh_{n,m}:0\le n\le J\}
\]
is bounded. If the data and the initial families are bounded by constants
independent of \(N\), then the same is true for the generated finite family.

Moreover, every subsequence has a further subsequence such that, for each
\(n=0,\ldots,J\),
\[
\bx_{n,m_k}\to \bar\bx_n \quad\text{in } X,
\qquad
\norm{\bh_{n,m_k}-\bar\bh_n}_\infty\to0,
\]
where \((\bar\bx_n,\bar\bh_n)_{n=0}^J\) is generated by
\Cref{eq:error-scheme-shifted} with limiting data \(\by\), initial reconstruction
\(\bx_0\), and initial graph input \(\bh_0\). Equivalently,
\[
\bar\br_n=\by-K\bar\bx_n,
\qquad
\bar\bh_{n+1}\in T_{\alpha_{n+1},\bar\bh_n}^q(\bar\br_n),
\qquad
\bar\bx_{n+1}=\bar\bx_n+\bar\bh_{n+1}.
\]
If the minimizer is unique at every limiting correction step, in particular if
\(q>1\), then the whole sequence converges at each fixed step.
\end{theorem}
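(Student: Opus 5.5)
We argue by induction on $n$, tracking the pair $(\bx_{n,m},\bh_{n,m})$ together with the residual $\br_{n,m}\coloneqq\by_m-K\bx_{n,m}$. The inductive claim at level $n$ is the following: along any subsequence there is a further subsequence such that $\bx_{j,m}\to\bar\bx_j$ in $X$ and $\norm{\bh_{j,m}-\bar\bh_j}_\infty\to0$ for all $j\le n$, where $(\bar\bx_j,\bar\bh_j)_{j\le n}$ is an admissible limiting trajectory. We also record explicit bounds on $\norm{\bx_{j,m}}_2$ and $\norm{\bh_{j,m}}_2$.

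The base case $n=0$ is exactly the hypothesis, and convergent sequences are bounded. For the inductive step, we first note that $K$ is continuous with $\norm{K\bu}_Y\le M_K\norm{\bu}_2$ by \Cref{hyp:joint-coercivity}. Hence $\br_{n,m}\to\bar\br_n\coloneqq\by-K\bar\bx_n$ along the subsequence, and
\[
\norm{\br_{n,m}}_Y\le\norm{\by_m}_Y+M_K\norm{\bx_{n,m}}_2 .
\]
We then apply \Cref{prop:error-one-step-stability} with $\br_m=\br_{n,m}$, $\bz_m=\bh_{n,m}$, $\bz=\bar\bh_n$ and $\alpha=\alpha_{n+1}$. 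This gives boundedness of $\bh_{n+1,m}$ and a further subsequence with $\bh_{n+1,m}\to\bar\bh_{n+1}\in T^q_{\alpha_{n+1},\bar\bh_n}(\bar\br_n)$. For the quantitative bound we test against $\bu=0$, which gives $\Phi\le\tfrac12\norm{\br_{n,m}}_Y^2$. We then use \Cref{prop:error-one-step} with $M=\tfrac12\norm{\br_{n,m}}_Y^2$ to obtain
\[
\norm{\bh_{n+1,m}}_2\le C_J\bigl(2\norm{\br_{n,m}}_Y+(q\norm{\br_{n,m}}_Y^2/(2\alpha_{n+1}))^{1/q}\bigr).
\]
This depends only on $C_J$, $M_K$, $\alpha_{n+1}$ and the previous bounds, so it is independent of $N$ whenever the data and initial bounds are. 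Finally $\bx_{n+1,m}=\bx_{n,m}+\bh_{n+1,m}\to\bar\bx_n+\bar\bh_{n+1}=\bar\bx_{n+1}$, and the norm bounds add. After $J$ steps we have extracted finitely many nested subsequences, which yields the simultaneous convergence for all $n\le J$.

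The main obstacle is the change of topology. The next graph is built from $\bh_{n,m}$, and \Cref{cor:R-bounds} requires $\norm{\cdot}_\infty$ convergence of the reference, whereas the one-step stability only delivers convergence in $X$. At a fixed discretization level $X$ is finite-dimensional, so all norms are equivalent and convergence in $\norm{\cdot}_2$ implies $\norm{\cdot}_\infty$ convergence. The $N$-dependence of that equivalence constant is harmless, because the convergence statement is made at fixed $N$; only the boundedness constants must be dimension-free, and those come from the sublevel estimate in $\norm{\cdot}_2$. A similar remark covers $\bx_{0,m}\to\bx_0$ in $\norm{\cdot}_2$, which suffices because $\bx$ enters only through $K$, never through a graph.

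For the uniqueness statement, suppose the limiting minimizer is unique at each step. Then the limiting trajectory $(\bar\bx_n,\bar\bh_n)$ is uniquely determined, since each step is determined by the previous one. So every subsequence has a further subsequence converging to the same limit, and the standard subsequence argument gives convergence of the whole sequence at each fixed $n\le J$. For $q>1$, uniqueness is supplied by \Cref{prop:error-one-step}.
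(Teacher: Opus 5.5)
Your proof is correct and follows the paper's argument. Like the paper, you induct on $n$, get residual convergence from continuity of $K$, apply the one-step stability proposition with reference $\bh_{n,m}$, and use finite-dimensional norm equivalence to upgrade convergence in $X$ to $\norm{\cdot}_\infty$ for the next graph input; your explicit test-against-zero bound on $\norm{\bh_{n+1,m}}_2$ adds a quantitative version of the $N$-independent boundedness claim, which the paper leaves implicit.
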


\begin{proof}
We argue by induction. The convergence of \(\bx_{0,m}\) and \(\bh_{0,m}\) is
assumed. Suppose that, along a subsequence,
\[
\bx_{n,m}\to \bar\bx_n,
\qquad
\norm{\bh_{n,m}-\bar\bh_n}_\infty\to0 .
\]
Then
\[
\br_{n,m}\coloneqq \by_m-K\bx_{n,m}
\to
\bar\br_n\coloneqq \by-K\bar\bx_n
\quad\text{in }Y .
\]
Since
\[
\bh_{n+1,m}\in T_{\alpha_{n+1},\bh_{n,m}}^q(\br_{n,m}),
\]
\Cref{prop:error-one-step-stability} gives boundedness of
\((\bh_{n+1,m})\), and every cluster point solves the limiting correction
problem
\[
\bar\bh_{n+1}\in T_{\alpha_{n+1},\bar\bh_n}^q(\bar\br_n).
\]
After extracting a further subsequence,
\[
\bh_{n+1,m}\to \bar\bh_{n+1}
\]
in \(X\), hence also in \(\norm{\cdot}_\infty\) at fixed \(N\). The update
\[
\bx_{n+1,m}=\bx_{n,m}+\bh_{n+1,m}
\]
then gives
\[
\bx_{n+1,m}\to \bar\bx_n+\bar\bh_{n+1}\eqqcolon \bar\bx_{n+1}.
\]
This proves the induction step. If each limiting correction step has a unique
minimizer, no subsequence extraction is needed, and the whole sequence converges.
\end{proof}

\subsection{Convergence of \texorpdfstring{\Cref{alg:error-iter}}{the error-equation scheme} for noisy data}
\label{sec:error-noisy}

In contrast to \Cref{alg:standard-iter}, the regularization term in
\Cref{alg:error-iter} acts on the correction \(\bh_{n+1}^\delta\), not directly
on the cumulative reconstruction \(\bx_n^\delta\). Consequently, without an
additional selection principle, the natural convergence statement is convergence
of the stopped reconstructions to the exact-solution set \(\Ssol(\by)\), rather
than to the graph-minimizing set \(\mathcal S_{\rm gm}(\by)\).

Let \(m(\delta)\ge1\) be an a priori stopping index satisfying
\begin{equation}\label{eq:error-apriori-rule}
\alpha_{m(\delta)}\to0,
\qquad
\frac{\delta^2}{\alpha_{m(\delta)}}\to0
\qquad\text{as }\delta\downarrow0 .
\end{equation}
We assume that the predecessor of the stopped reconstruction is uniformly
bounded:
\begin{equation}\label{eq:error-noisy-bounded-x}
\sup_{0<\delta\le\delta_0}
\norm{\bx_{m(\delta)-1}^\delta}_2
\le M_X,
\end{equation}
with \(M_X\) independent of \(N\). 

\begin{theorem}
\label{thm:error-noisy-convergence}
Assume \Cref{hyp:consistency,hyp:joint-coercivity}. Let
\(\norm{\by^\delta-\by}_Y\le\delta\), and let
\((\bx_n^\delta,\bh_n^\delta)\) be generated by
\Cref{eq:error-scheme-shifted}. Suppose that
\Cref{eq:error-apriori-rule,eq:error-noisy-bounded-x} hold. Then the stopped
family \((\bx_{m(\delta)}^\delta)\) is bounded in \(\norm{\cdot}_2\), and
\begin{equation}\label{eq:error-residual-convergence}
\norm{K\bx_{m(\delta)}^\delta-\by}_Y\to0
\qquad\text{as }\delta\downarrow0.
\end{equation}
Moreover,
\begin{equation}\label{eq:error-set-convergence}
\operatorname{dist}_2
\bigl(\bx_{m(\delta)}^\delta,\Ssol(\by)\bigr)
\to0
\qquad\text{as }\delta\downarrow0.
\end{equation}
\end{theorem}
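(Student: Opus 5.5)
The plan is to compare the last correction step against the exact error $\gt-\bx_{m-1}^\delta$, where $m\coloneqq m(\delta)$; this mirrors the a priori estimate used for \Cref{alg:standard-iter}. The candidate $\bu^\star\coloneqq\gt-\bx_{m-1}^\delta$ satisfies
\[
K\bu^\star-\br_{m-1}^\delta=\by-\by^\delta .
\]
Since $\bh_m^\delta\in T^q_{\alpha_m,\bh_{m-1}^\delta}(\br_{m-1}^\delta)$, minimality against $\bu^\star$ gives
\[
\tfrac12\norm{K\bh_m^\delta-\br_{m-1}^\delta}_Y^2+\alpha_m\Rreg^q_{\bh_{m-1}^\delta}(\bh_m^\delta)
\le\tfrac12\delta^2+\alpha_m\Rreg^q_{\bh_{m-1}^\delta}(\bu^\star).
\]
By \Cref{eq:R-uniform-bound}, \Cref{hyp:consistency} and \Cref{eq:error-noisy-bounded-x},
\[
\Rreg^q_{\bh_{m-1}^\delta}(\bu^\star)\le\tfrac{(2d_R)^q}{q}\bigl(M_{\rm gt}+M_X\bigr)^q\eqqcolon C_1,
\]
a bound independent of $N$ and of the graph input. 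Here I use $\norm{\cdot}_q\le\norm{\cdot}_2$ for the normalized norms.

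From this comparison, and since $\br_{m-1}^\delta-K\bh_m^\delta=\by^\delta-K\bx_m^\delta$, I obtain
\[
\norm{\by^\delta-K\bx_m^\delta}_Y^2\le\delta^2+2\alpha_mC_1
\qquad\text{and}\qquad
\Rreg^q_{\bh_{m-1}^\delta}(\bh_m^\delta)\le C_1+\frac{\delta^2}{2\alpha_m}.
\]
The triangle inequality then yields
\[
\norm{K\bx_m^\delta-\by}_Y\le(\delta^2+2\alpha_mC_1)^{1/2}+\delta\to0
\]
by \Cref{eq:error-apriori-rule}, which is \Cref{eq:error-residual-convergence}.

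For boundedness, I apply \Cref{prop:error-one-step} to $\bh_m^\delta$ with $M=\tfrac12\delta^2+\alpha_mC_1$. More directly, I use \Cref{eq:joint-coercivity} with reference $\bh_{m-1}^\delta$:
\[
\norm{\bh_m^\delta}_2\le C_J\bigl(\norm{K\bh_m^\delta}_Y+\norm{\Delta_{\bh_{m-1}^\delta}\bh_m^\delta}_q\bigr).
\]
The first term is bounded via $\norm{K\bh_m^\delta}_Y\le\norm{\br_{m-1}^\delta}_Y+(\delta^2+2\alpha_mC_1)^{1/2}$, and $\norm{\br_{m-1}^\delta}_Y\le\norm{\by}_Y+\delta+M_KM_X$. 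The second term is bounded by $(qC_1+q\delta^2/(2\alpha_m))^{1/q}$, which stays bounded since $\delta^2/\alpha_m\to0$. Hence $\norm{\bx_m^\delta}_2\le M_X+\norm{\bh_m^\delta}_2$ is bounded uniformly in $\delta\le\delta_0$ and in $N$. For this I need $\norm{\by}_Y\le M_KM_{\rm gt}$, which holds by \Cref{hyp:joint-coercivity}.

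For \Cref{eq:error-set-convergence} I argue by contradiction, as in \Cref{thm:standard-vanishing-noise}. Suppose there are $\varepsilon>0$ and $\delta_j\downarrow0$ with $\operatorname{dist}_2(\bx^{\delta_j}_{m(\delta_j)},\Ssol(\by))\ge\varepsilon$. At fixed $N$, finite-dimensional compactness extracts a convergent subsequence $\bx^{\delta_j}_{m(\delta_j)}\to\bx^\star$. Continuity of $K$ and the residual convergence give $K\bx^\star=\by$, so $\bx^\star\in\Ssol(\by)$, and the distance tends to $0$, a contradiction. Unlike in the standard case, no identification of the limit through the regularizer is attempted. The graph reference $\bh_{m-1}^\delta$ and the regularized variable $\bh_m^\delta$ are increments, so the limit inequality would only concern corrections; that is why the conclusion is only convergence to $\Ssol(\by)$.

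The main obstacle is the uniform control of $\Rreg^q_{\bh_{m-1}^\delta}(\gt-\bx_{m-1}^\delta)$, since the natural comparison element now depends on the iterate. This is exactly where assumption \Cref{eq:error-noisy-bounded-x} enters, together with the fact that \Cref{eq:R-uniform-bound} is uniform in the reference image, so nothing about $\bh_{m-1}^\delta$ is needed.
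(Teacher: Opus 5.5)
Your proposal is correct and follows essentially the same route as the paper. You compare $\bh_m^\delta$ against the exact error $\gt-\bx_{m-1}^\delta$, bound its regularizer via \Cref{eq:R-uniform-bound} and \Cref{eq:error-noisy-bounded-x}, derive the residual bound and then boundedness of $\bh_m^\delta$ (your direct use of joint coercivity is just the sublevel estimate unwound), and conclude by compactness and contradiction; your remark that $\norm{\by}_Y\le M_KM_{\rm gt}$ usefully makes explicit the $N$-independence that the paper leaves implicit.
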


\begin{proof}
Set \(m=m(\delta)\). Let \(\gt\) be the bounded exact solution from
\Cref{hyp:consistency}, and set
\[
\be_{m-1}^\delta\coloneqq \gt-\bx_{m-1}^\delta .
\]
Since \(K\gt=\by\) and
\(\br_{m-1}^\delta=\by^\delta-K\bx_{m-1}^\delta\),
\[
K\be_{m-1}^\delta-\br_{m-1}^\delta
=
\by-\by^\delta,
\qquad
\norm{K\be_{m-1}^\delta-\br_{m-1}^\delta}_Y\le\delta .
\]
Moreover, because \(q\le2\),
\[
\norm{\be_{m-1}^\delta}_q
\le
\norm{\gt}_q+\norm{\bx_{m-1}^\delta}_q
\le
M_{\rm gt}+M_X .
\]
Hence, by \Cref{eq:R-uniform-bound},
\[
\Rreg_{\bh_{m-1}^\delta}^q(\be_{m-1}^\delta)
\le
C_E,
\qquad
C_E\coloneqq
\frac{(2d_R)^q}{q}(M_{\rm gt}+M_X)^q .
\]
Minimality of \(\bh_m^\delta\) gives
\[
\frac12\norm{K\bh_m^\delta-\br_{m-1}^\delta}_Y^2
+
\alpha_m\Rreg_{\bh_{m-1}^\delta}^q(\bh_m^\delta)
\le
\frac12\delta^2+\alpha_m C_E .
\]
Since
\[
K\bh_m^\delta-\br_{m-1}^\delta
=
-\br_m^\delta,
\]
we obtain
\begin{equation*}
\norm{\br_m^\delta}_Y^2
\le
\delta^2+2\alpha_m C_E .
\end{equation*}
By \Cref{eq:error-apriori-rule}, the right-hand side tends to zero. Therefore
\[
\norm{K\bx_m^\delta-\by}_Y
\le
\norm{K\bx_m^\delta-\by^\delta}_Y
+\norm{\by^\delta-\by}_Y
=
\norm{\br_m^\delta}_Y
+\norm{\by^\delta-\by}_Y
\to0,
\]
which proves \Cref{eq:error-residual-convergence}.

It remains to show the boundedness of the stopped reconstructions. Define
\[
M_\delta\coloneqq \frac12\delta^2+\alpha_m C_E .
\]
The bound in \Cref{eq:error-noisy-bounded-x} and \Cref{hyp:joint-coercivity} give
\[
\norm{\br_{m-1}^\delta}_Y
\le
\norm{\by}_Y+\delta_0+M_KM_X
\]
for \(0<\delta\le\delta_0\). Applying the sublevel estimate
\Cref{eq:error-one-step-sublevel} to \(\bh_m^\delta\) yields
\[
\norm{\bh_m^\delta}_2
\le
C_J\left(
(2M_\delta)^{1/2}
+\norm{\br_{m-1}^\delta}_Y
+\left(\frac{qM_\delta}{\alpha_m}\right)^{1/q}
\right).
\]
The first term is bounded, the second term is bounded by the previous estimate,
and
\[
\frac{M_\delta}{\alpha_m}
=
\frac{\delta^2}{2\alpha_m}+C_E
\]
is bounded for small \(\delta\) by \Cref{eq:error-apriori-rule}. Hence
\((\bh_m^\delta)\) is bounded in \(\norm{\cdot}_2\). Together with
\Cref{eq:error-noisy-bounded-x}, this gives boundedness of
\[
\bx_m^\delta=\bx_{m-1}^\delta+\bh_m^\delta .
\]

We now prove \Cref{eq:error-set-convergence}. Suppose, by contradiction, that it
fails. Then there exist \(\varepsilon>0\) and a sequence
\(\delta_j\downarrow0\) such that
\[
\operatorname{dist}_2
\bigl(\bx_{m(\delta_j)}^{\delta_j},\Ssol(\by)\bigr)
\ge\varepsilon
\qquad\forall j.
\]
By the boundedness just proved and finite-dimensional compactness, after passing
to a subsequence,
\[
\bx_{m(\delta_j)}^{\delta_j}\to\bx^\star .
\]
The residual convergence \Cref{eq:error-residual-convergence} gives
\(K\bx^\star=\by\), hence \(\bx^\star\in\Ssol(\by)\). Therefore
\[
\operatorname{dist}_2
\bigl(\bx_{m(\delta_j)}^{\delta_j},\Ssol(\by)\bigr)
\le
\norm{\bx_{m(\delta_j)}^{\delta_j}-\bx^\star}_2
\to0,
\]
contradicting the choice of the sequence. This proves
\Cref{eq:error-set-convergence}.
\end{proof}

\begin{remark}
The boundedness condition \cref{eq:error-noisy-bounded-x} is the only additional condition required by the error-equation scheme but not by the standard scheme. In
\Cref{alg:standard-iter}, joint coercivity controls the same variable that is
being reconstructed, because the functional contains
\(\Rreg_{\bx_{n-1}^\delta}^q(\bx_n^\delta)\). In
\Cref{alg:error-iter}, the functional controls the correction
\(\bh_m^\delta\), while the reconstruction is the cumulative quantity
\(\bx_m^\delta=\bx_{m-1}^\delta+\bh_m^\delta\). Thus joint coercivity alone does
not control the previous reconstruction \(\bx_{m-1}^\delta\); some boundedness or stopping
selection condition is needed unless additional structure is imposed. However, we remark that for grayscale images, such a bound is easily verified.
\end{remark}

\subsection{Convergence of \texorpdfstring{\Cref{alg:mixed-iter}}{the mixed scheme} for noisy data}
\label{sec:mixed-convergence}

The mixed scheme consists of a fixed standard phase followed by an error-equation
phase. Since the standard phase has fixed length, its finite-iteration stability
follows from \Cref{thm:finite-stability}; after that point, the error phase is
exactly \Cref{alg:error-iter} with shifted indices. Therefore the vanishing-noise
argument is inherited from \Cref{thm:error-noisy-convergence}.

Fix \(n_{\mathrm{std}}\in\mathbb N_{>0}\). After the standard phase, write the
chosen initial graph input for the error stage as \(\bg_0^\delta\), and set
\[
\br_{n_{\mathrm{std}}}^\delta
=
\by^\delta-K\bx_{n_{\mathrm{std}}}^\delta .
\]
For \(k\ge0\), the error stage of \Cref{alg:mixed-iter} is
\begin{equation}\label{eq:mixed-error-stage}
\left\{
\begin{aligned}
\br_{n_{\mathrm{std}}+k}^\delta
&=
\by^\delta-K\bx_{n_{\mathrm{std}}+k}^\delta,\\
\bg_{k+1}^\delta
&\in
T_{\alpha_{n_{\mathrm{std}}+k+1},\bg_k^\delta}^q
\bigl(\br_{n_{\mathrm{std}}+k}^\delta\bigr),\\
\bx_{n_{\mathrm{std}}+k+1}^\delta
&=
\bx_{n_{\mathrm{std}}+k}^\delta+\bg_{k+1}^\delta .
\end{aligned}
\right.
\end{equation}

Let the number \(m(\delta)\ge1\) of error-stage iterations satisfy
\begin{equation}\label{eq:mixed-apriori-rule}
\alpha_{n_{\mathrm{std}}+m(\delta)}\to0,
\qquad
\frac{\delta^2}{\alpha_{n_{\mathrm{std}}+m(\delta)}}\to0
\qquad\text{as }\delta\downarrow0 .
\end{equation}
Assume that the predecessor of the stopped mixed reconstruction is uniformly
bounded:
\begin{equation}\label{eq:mixed-noisy-bounded}
\sup_{0<\delta\le\delta_0}
\norm{\bx_{n_{\mathrm{std}}+m(\delta)-1}^\delta}_2
\le M_X,
\end{equation}
with \(M_X\) independent of \(N\).

\begin{theorem}
\label{thm:mixed-noisy-convergence}
Assume \Cref{hyp:consistency,hyp:joint-coercivity}. Let
\(\norm{\by^\delta-\by}_Y\le\delta\). For each \(\delta\), compute
\(n_{\mathrm{std}}\) standard iterations and then apply the error stage
\Cref{eq:mixed-error-stage}. Suppose that
\Cref{eq:mixed-apriori-rule,eq:mixed-noisy-bounded} hold. Then the stopped mixed
family
\[
\bigl(\bx_{n_{\mathrm{std}}+m(\delta)}^\delta\bigr)
\]
is bounded in \(\norm{\cdot}_2\), and
\begin{equation}\label{eq:mixed-residual-convergence}
\norm{K\bx_{n_{\mathrm{std}}+m(\delta)}^\delta-\by}_Y\to0
\qquad\text{as }\delta\downarrow0.
\end{equation}
Moreover,
\begin{equation}\label{eq:mixed-set-convergence}
\operatorname{dist}_2
\bigl(\bx_{n_{\mathrm{std}}+m(\delta)}^\delta,\Ssol(\by)\bigr)
\to0
\qquad\text{as }\delta\downarrow0 .
\end{equation}
\end{theorem}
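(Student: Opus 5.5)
The plan is to reduce the statement to \Cref{thm:error-noisy-convergence} by re-indexing. The one point needing care is that the error stage starts from a $\delta$-dependent reconstruction $\bx_{n_{\mathrm{std}}}^\delta$ and an arbitrary graph input $\bg_0^\delta$. The proof of \Cref{thm:error-noisy-convergence} never uses any property of the initial point or of the initial graph input. It only uses the last correction step, which starts from the predecessor $\bx_{m-1}^\delta$ and the reference $\bh_{m-1}^\delta$, together with the uniform bound \cref{eq:error-noisy-bounded-x} on that predecessor. So the standard phase serves only to produce a starting point, and \Cref{thm:finite-stability} is not actually needed for the vanishing-noise statement.

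Concretely, I set $p\coloneqq n_{\mathrm{std}}+m(\delta)$ and $\be^\delta\coloneqq\gt-\bx_{p-1}^\delta$. Then $K\be^\delta-\br_{p-1}^\delta=\by-\by^\delta$ has norm at most $\delta$. By \cref{eq:mixed-noisy-bounded} and $q\le2$, we have $\norm{\be^\delta}_q\le M_{\rm gt}+M_X$. By \cref{eq:R-uniform-bound}, this gives $\Rreg^q_{\bg_{m-1}^\delta}(\be^\delta)\le C_E$ with the same constant $C_E$ as before; this bound is uniform in the reference, which covers the case $m(\delta)=1$ where the reference is $\bg_0^\delta$. Testing the minimality of $\bg_{m}^\delta$ against $\be^\delta$ gives
\[
\tfrac12\norm{\br_p^\delta}_Y^2+\alpha_p\Rreg^q_{\bg_{m-1}^\delta}(\bg_m^\delta)\le \tfrac12\delta^2+\alpha_pC_E ,
\]
because $K\bg_m^\delta-\br_{p-1}^\delta=-\br_p^\delta$. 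From \cref{eq:mixed-apriori-rule} it follows that $\norm{\br_p^\delta}_Y\to0$. The triangle inequality then yields \cref{eq:mixed-residual-convergence}.

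For boundedness, I first bound $\norm{\br_{p-1}^\delta}_Y\le\norm{\by}_Y+\delta_0+M_KM_X$. Next I apply \Cref{prop:error-one-step} with $M_\delta=\tfrac12\delta^2+\alpha_pC_E$. The ratio $M_\delta/\alpha_p=\delta^2/(2\alpha_p)+C_E$ stays bounded, so $\norm{\bg_m^\delta}_2$ is bounded independently of $N$. Hence $\bx_p^\delta=\bx_{p-1}^\delta+\bg_m^\delta$ is bounded as well.

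Finally, \cref{eq:mixed-set-convergence} follows from the same contradiction-and-compactness argument. Suppose the distance stays at least $\varepsilon$ along some sequence $\delta_j\downarrow0$. Extract a convergent subsequence of the bounded stopped iterates. The residual convergence places the limit in $\Ssol(\by)$, which contradicts the $\varepsilon$-separation. I expect no real obstacle. The only thing to check carefully is the index bookkeeping, and in particular that the $m(\delta)=1$ case with reference $\bg_0^\delta$ is covered by the reference-independent bound.
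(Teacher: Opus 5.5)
Your proposal is correct and takes the paper's approach: the paper's proof is the re-indexing you describe, applying \Cref{thm:error-noisy-convergence} to $\widetilde\bx_k^\delta=\bx_{n_{\mathrm{std}}+k}^\delta$, $\widetilde\bh_k^\delta=\bg_k^\delta$, $\widetilde\alpha_k=\alpha_{n_{\mathrm{std}}+k}$, with \cref{eq:mixed-apriori-rule,eq:mixed-noisy-bounded} playing the roles of \cref{eq:error-apriori-rule,eq:error-noisy-bounded-x}. Your explicit unfolding in the shifted indices is a faithful expansion of that same argument, including your observations that $m(\delta)=1$ is covered because $C_E$ does not depend on the reference and that \Cref{thm:finite-stability} is not needed.
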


\begin{proof}
Apply \Cref{thm:error-noisy-convergence} to the shifted variables
\[
\widetilde\bx_k^\delta
\coloneqq
\bx_{n_{\mathrm{std}}+k}^\delta,
\qquad
\widetilde\bh_k^\delta
\coloneqq
\bg_k^\delta,
\qquad
\widetilde\alpha_k
\coloneqq
\alpha_{n_{\mathrm{std}}+k}.
\]
Then \Cref{eq:mixed-error-stage} is exactly
\Cref{eq:error-scheme-shifted} for
\((\widetilde\bx_k^\delta,\widetilde\bh_k^\delta)\). The parameter rule
\Cref{eq:mixed-apriori-rule} is \Cref{eq:error-apriori-rule} for
\(\widetilde\alpha_{m(\delta)}\), and the boundedness condition
\Cref{eq:mixed-noisy-bounded} is \Cref{eq:error-noisy-bounded-x} for the shifted
predecessor
\[
\widetilde\bx_{m(\delta)-1}^\delta
=
\bx_{n_{\mathrm{std}}+m(\delta)-1}^\delta .
\]
Therefore \Cref{thm:error-noisy-convergence} gives boundedness of
\[
\widetilde\bx_{m(\delta)}^\delta
=
\bx_{n_{\mathrm{std}}+m(\delta)}^\delta,
\]
as well as \Cref{eq:mixed-residual-convergence} and
\Cref{eq:mixed-set-convergence}.
\end{proof}

\begin{remark}
The fixed standard phase does not need to converge to an exact solution for
\Cref{thm:mixed-noisy-convergence}; it only supplies the initial reconstruction
and initial residual for the shifted error-equation scheme. 
\end{remark}

\section{Numerical tests}\label{sec:numerical_results}

In this section, we present numerical results for image deblurring and CT problems
obtained with the proposed iterative schemes described in \Cref{sec:iterschemes}.
We consider $\ell^2$--$\ell^1$ regularization, i.e., $q=1$. The approximate solution of the resulting $\ell^2$--$\ell^1$ minimization problems is computed by the majorization-minimization generalized Krylov subspace (MM-GKS) method with the adaptive majorant proposed in \cite{LMRS15} as implemented in \cite{buccini2024software}.
This implementation employs a restarting strategy for the generalized Krylov subspace to reduce the computational cost, while the regularization parameter is automatically estimated by GCV.

For every fixed deblurring or CT matrix used below,
\Cref{rem:finite-dim-coercivity} gives the joint coercivity needed for the
$q=1$ numerical problem, although its constant may depend on $N$. At the
dimension-free level, \Cref{prop:dim-free-deblurring} treats the periodic blur
model for $q=2$, whereas \Cref{prop:dim-free-CT} treats the full-angle CT model
for every $q\in[1,2]$.

The parameters defining the graph Laplacian are set to $R=5$ and $\sigma=10^{-3}$ for the standard iterative scheme \cref{eq:standard-iter}; for the error-equation stage of the mixed iterative scheme in \Cref{alg:mixed-iter}, we keep $R=5$ and set $\sigma=10^{-4}$.
For the finite computations reported here, the contrast cap $B_{\rm c}$ in \Cref{eq:contrast-cap} is chosen larger
than $1$, so the cap
is inactive and the computed Gaussian weights are unchanged.

We compare the reconstruction quality of our iterated graph Laplacian methods with that obtained after the first iteration, i.e., the graphLa\texttt{+}$\Psi$ method proposed in \cite{bianchi2025data}.
To assess the quality of the reconstructions, we consider the standard imaging performance metrics: the Relative Reconstruction Error (RRE), the Peak Signal-to-Noise Ratio (PSNR), the Structural Similarity Index Measure (SSIM), and the Gradient Magnitude Similarity Deviation (GMSD) \cite{Xue2014-wt}, which is closely related to SSIM and assesses the similarity between two images through the standard deviation of the gradient magnitude similarity map.
When we present the performance metrics of a nonrectangular region of interest (ROI), SSIM and GMSD are computed on the smallest rectangular subimage that contains the ROI.

In the tables below, rows above the horizontal line report standard iterates $\bx_n^\delta$, while rows below the horizontal line report mixed reconstructions $\bx_{n_{\mathrm{std}}+k}^\delta$ after $k$ error-equation steps, consistent with \Cref{eq:mixed-error-stage}.

\subsection[Test 1: Satellite deblurring with Tikhonov initialization]{Test 1: Satellite deblurring for $\bx_0^\delta$ computed by $\ell^2$--$\ell^2$ Tikhonov}
For the first image deblurring problem, we consider the images in \Cref{fig:test4}.
The test image is a $238 \times 238$ grayscale image, and the PSF has a support of $9 \times 9$ pixels. Gaussian noise at level $1\%$ has been added to the blurred image to obtain the observed corrupted image.

\begin{figure}
        \centering
		\begin{subfigure}{0.25\textwidth}
			\includegraphics[width=\textwidth]{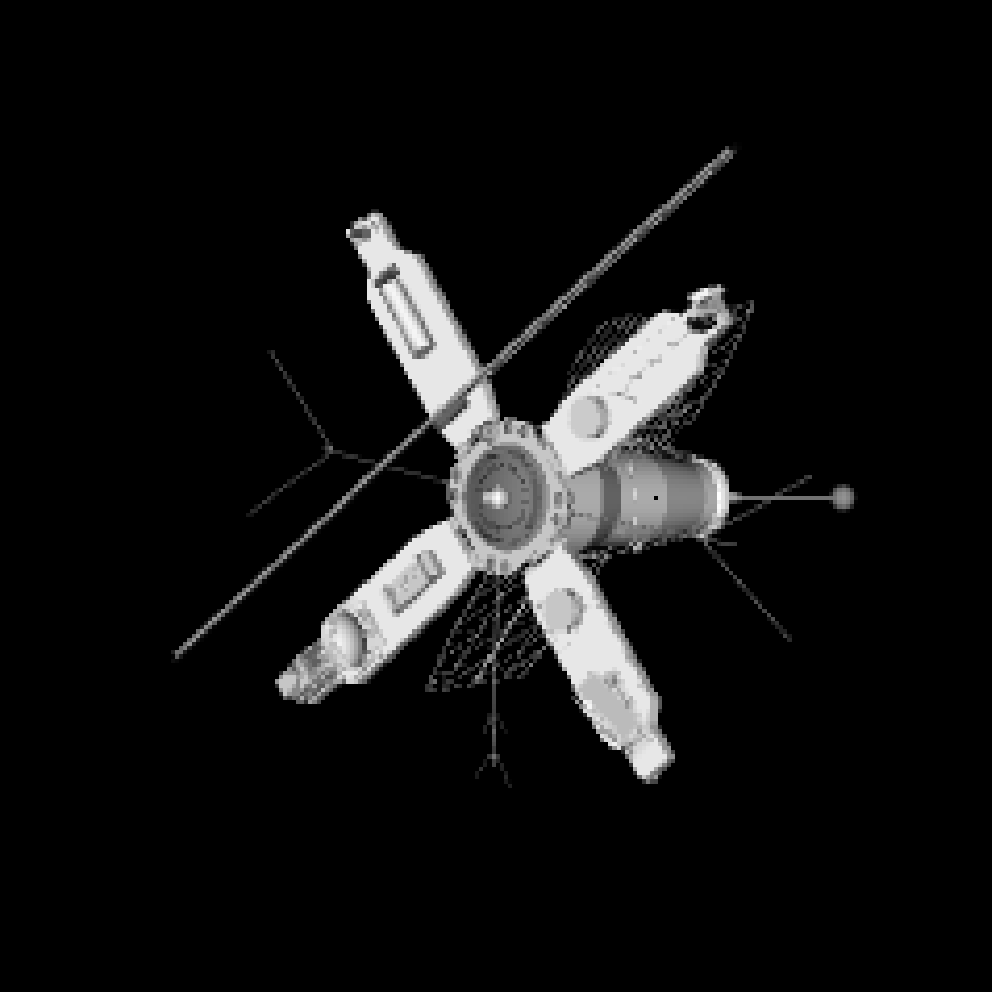}
			\caption{}
		\end{subfigure}\qquad
		\begin{subfigure}{0.25\textwidth}
			\includegraphics[clip,trim={2cm 1.5cm 2cm 1.5cm}, width=\textwidth, height=\textwidth]{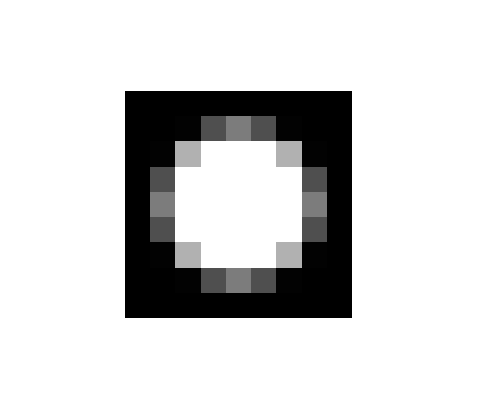}
			\caption{}
		\end{subfigure}\qquad
		\begin{subfigure}{0.25\textwidth}
			\includegraphics[width=\textwidth]{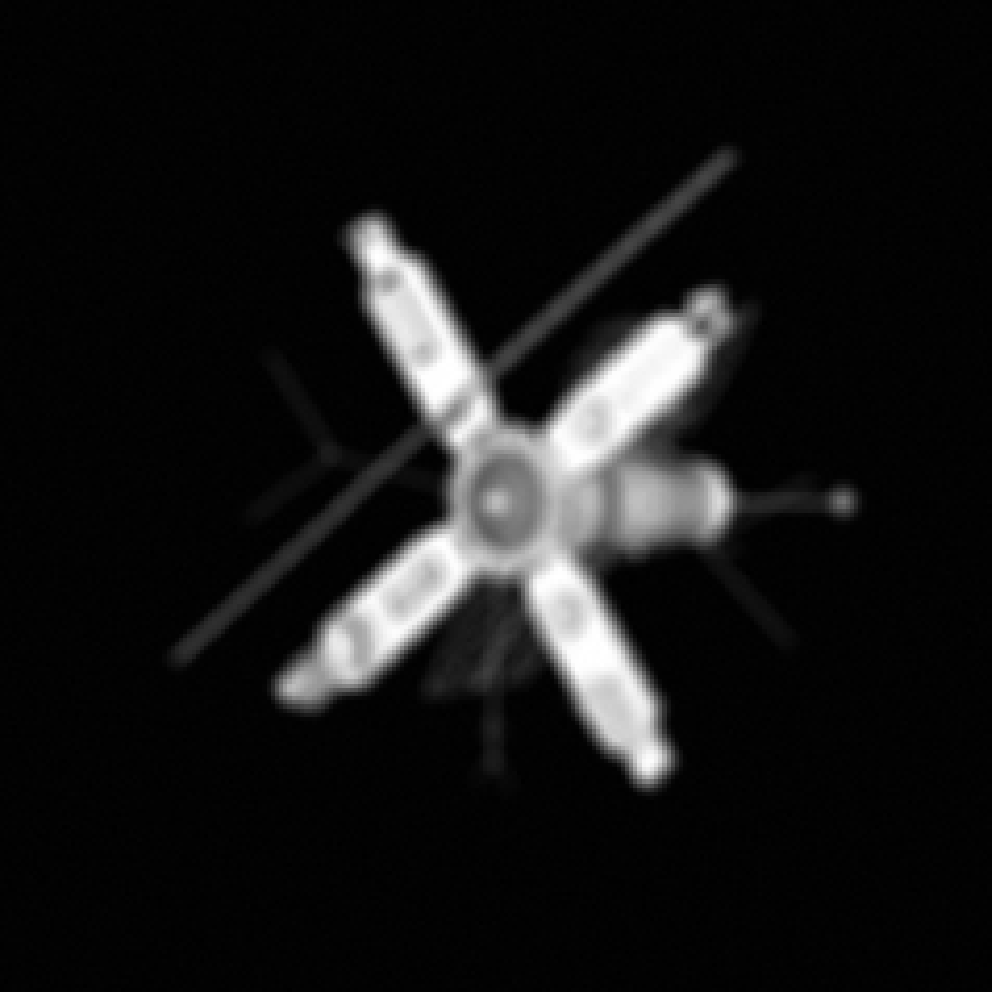}
			\caption{}
		\end{subfigure}
		\caption{Satellite deblurring test: (a) original image, (b) PSF with a $9\times9$-pixel support, (c) corrupted image.}
		\label{fig:test4}
	\end{figure}

The initial approximation $\bx_0^\delta$ is computed by $\ell^2$--$\ell^2$ Tikhonov regularization.
Note that the solution computed after the first iteration, i.e., $\bx_1^\delta$, is exactly the one computed by the graphLa\texttt{+}Tik method proposed in \cite{bianchi2025data}.

For the mixed iteration, in the notation of \Cref{eq:mixed-error-stage}, the initial error-stage graph input is $\bg_0^\delta=\br_{n_{\mathrm{std}}}^\delta$, consistent with the choice in \Cref{eq:g0res}.
The results for the general approach, which computes $\bg_0^\delta$ using Tikhonov regularization to approximate the solution of the error equation $K\bg=\br_{n_{\mathrm{std}}}^\delta$, are not reported since they are indistinguishable from the simple choice $\bg_0^\delta=\br_{n_{\mathrm{std}}}^\delta$. 

		\begin{table}
    \centering
        \begin{tabular}{c|c|c|c | c }
            iterate & RRE & PSNR & SSIM & GMSD \\ \hline
                $\bx_{0}^{\delta}$ & 0.2178  & 26.23 &  0.8949 & 0.0739\\
			$\bx_{1}^{\delta}$  & 0.1434 & 29.87 & 0.9565 & 0.0290 \\ 
			$\bx_{2}^{\delta}$  & 0.1222 & 31.25 & 0.9679 & 0.0209 \\ 
			$\bx_{3}^{\delta}$  & 0.1155 & 31.74 & 0.9721 & 0.0197 \\ 
			$\bx_{4}^{\delta}$  & 0.1109 & 32.10 & 0.9744 & 0.0193  \\ 
			$\bx_{n_{\mathrm{std}}}^{\delta}$  & 0.1099 & 32.17 & 0.9756 & 0.0194 \\ \hline
            $\bx_{n_{\mathrm{std}}+1}^{\delta}$  & 0.1093 & 32.22 & 0.9737 & 0.0176\\ 
			$\bx_{n_{\mathrm{std}}+2}^{\delta}$  & 0.1089 & 32.25 & 0.9713 & 0.0165\\ 
			$\bx_{n_{\mathrm{std}}+3}^{\delta}$  & 0.1086 & 32.28 & 0.9684 & 0.0157\\ 
			$\bx_{n_{\mathrm{std}}+4}^{\delta}$  & 0.1084 & 32.29 & 0.9651 & 0.0151\\ 
			$\bx_{n_{\mathrm{std}}+5}^{\delta}$  & 0.1083 & 32.30 & 0.9614 & 0.0146\\ 
			$\bx_{n_{\mathrm{std}}+6}^{\delta}$  & 0.1083 & 32.30 & 0.9575 & 0.0143\\ 
        \end{tabular}
		\caption{Performance metrics for Test 1. The upper part is the standard iterative scheme with $\bx_{0}^{\delta}$ computed by Tikhonov. The lower part is the error-equation stage of the mixed iterative scheme, started from $\bx_{n_{\mathrm{std}}}^{\delta}=\bx_{5}^{\delta}$.}
		\label{Table: test4}
	\end{table}
    
The performance metrics reported in \Cref{Table: test4} confirm the significant improvement achieved after the first iteration, in agreement with the results in~\cite{bianchi2025data}. 
In particular, the GMSD values confirm the effectiveness of the error-equation stage of the mixed iterative scheme.

The improvement in the quality of the reconstructions is more evident when looking at the restored images in \Cref{fig:test4 rec}, where the reconstruction for the graphLa\texttt{+}Tik ($\bx_{1}^{\delta}$) is compared with $\bx_{n_{\mathrm{std}}}^{\delta}$, and with the final reconstruction computed by the mixed iterative scheme. We note an improvement in the details, which is even more evident in \Cref{fig:test4 detail 1}, where a detail in the red box is considerably improved by the mixed iterative scheme, as confirmed by the metrics reported in the caption.

	\begin{figure}
    \centering
		\begin{subfigure}{0.25\textwidth}
			\includegraphics[width=\textwidth]{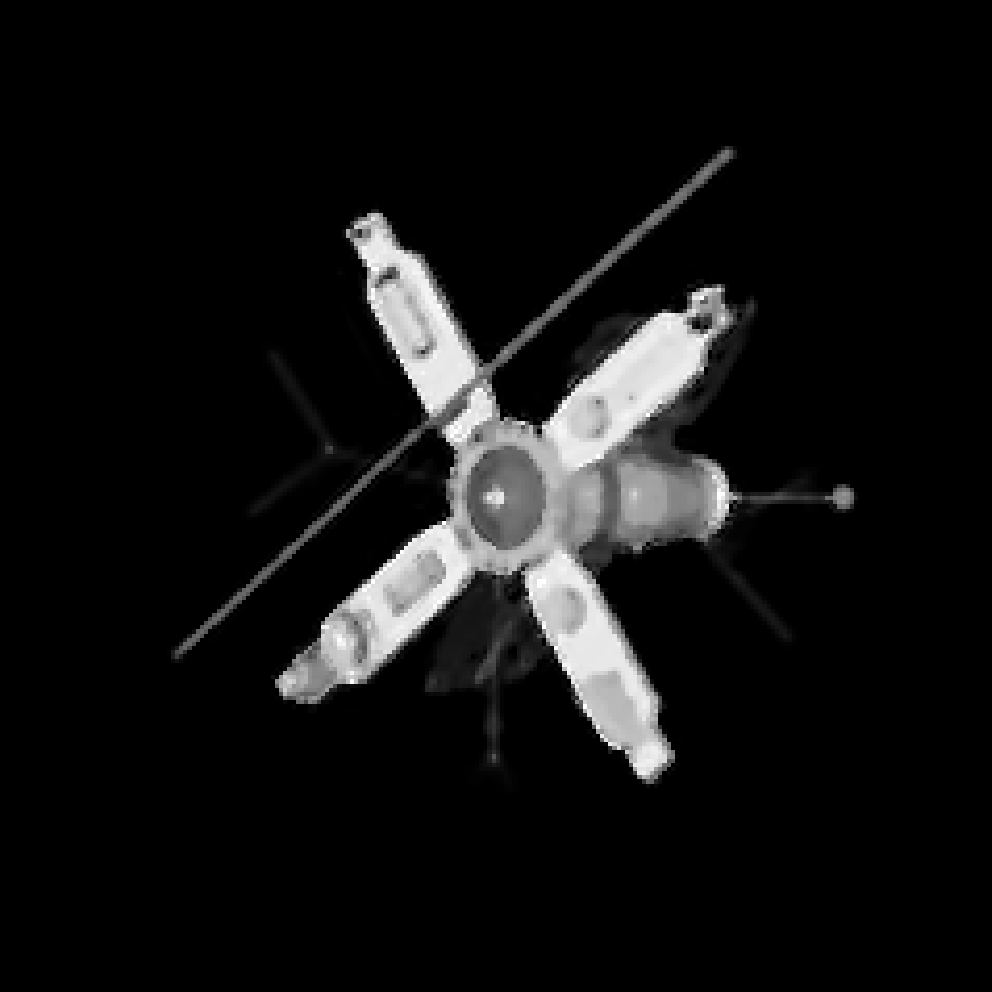}
			\caption{}
		\end{subfigure}\qquad
		\begin{subfigure}{0.25\textwidth}
			\includegraphics[width=\textwidth]{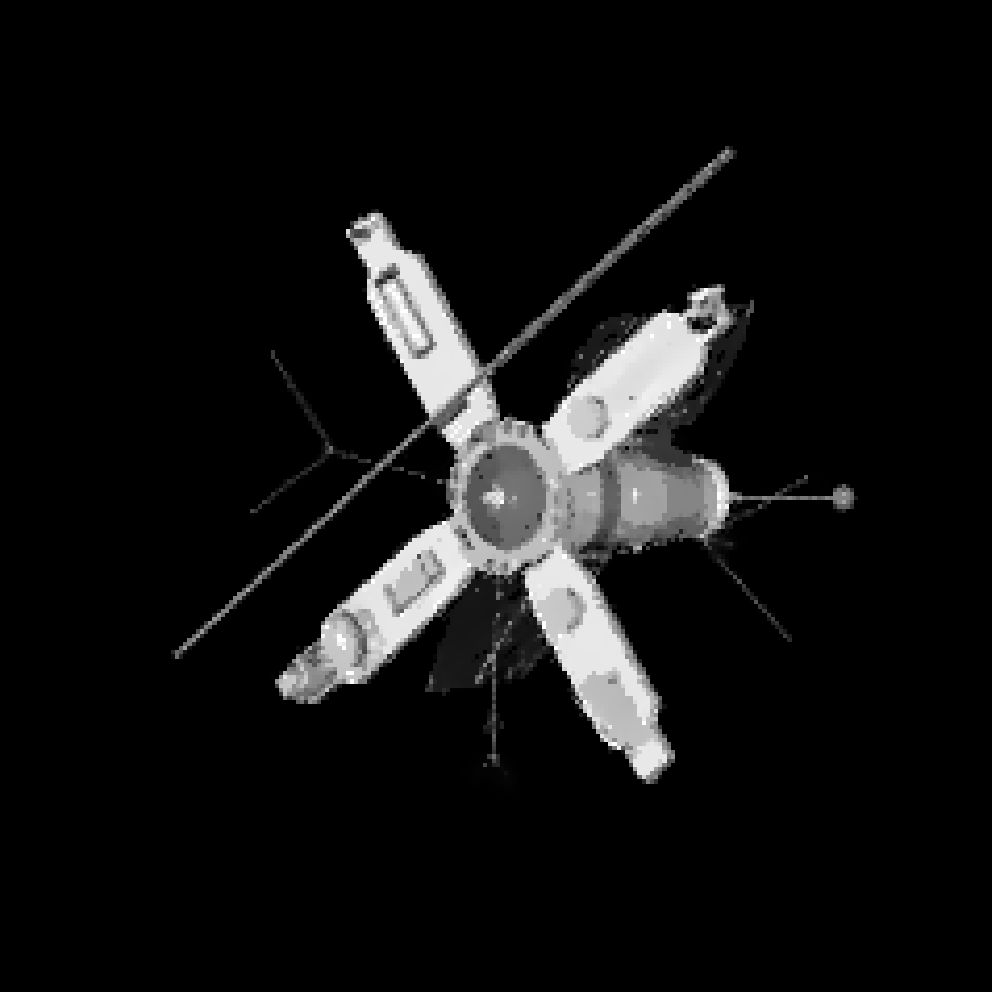}
			\caption{}
		\end{subfigure}\qquad
		\begin{subfigure}{0.25\textwidth}
			\includegraphics[width=\textwidth]{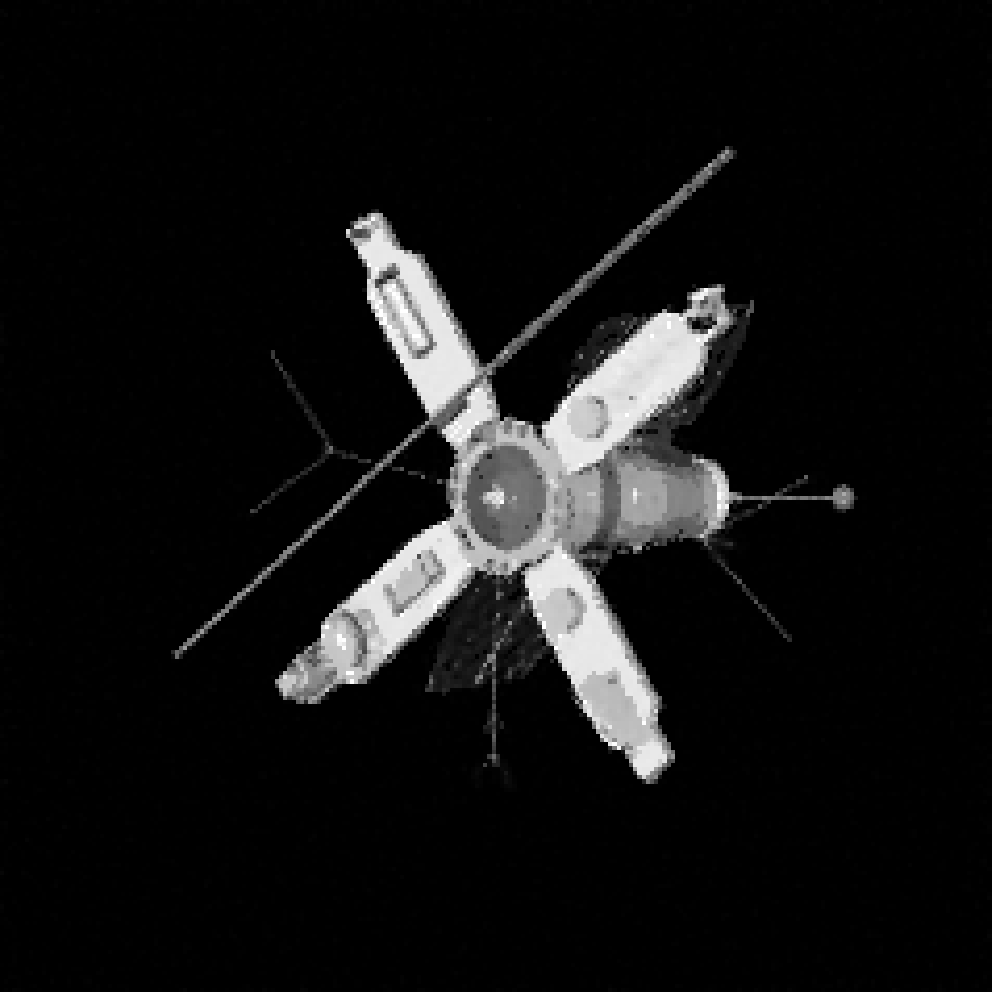}
			\caption{}
		\end{subfigure}
		\caption{Restored images for Test 1: (a) graphLa\texttt{+}Tik ($\bx_{1}^{\delta}$), (b) standard iterative scheme ($\bx_{n_{\mathrm{std}}}^{\delta}=\bx_{5}^{\delta}$), (c) final reconstruction with mixed iterative scheme ($\bx_{n_{\mathrm{std}}+6}^{\delta}$).}
		\label{fig:test4 rec}
	\end{figure}
	
	\begin{figure}
    	\begin{subfigure}{0.24\textwidth}
			\includegraphics[width=\textwidth]{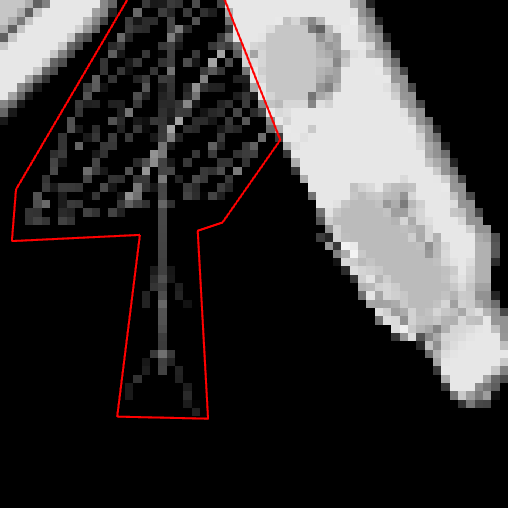}
			\caption{}
		\end{subfigure}
		\begin{subfigure}{0.24\textwidth}
			\includegraphics[width=\textwidth]{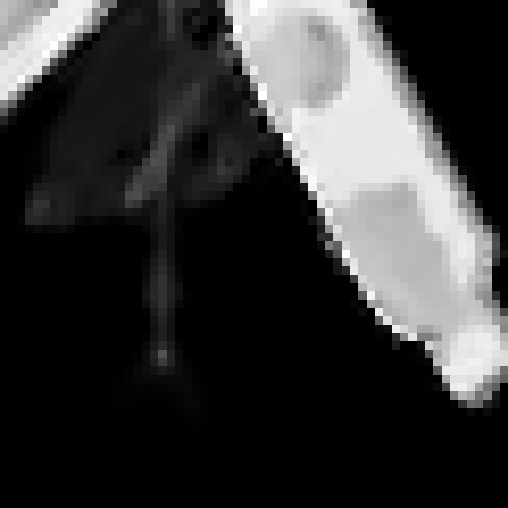}
			\caption{}
		\end{subfigure}
		\begin{subfigure}{0.24\textwidth}
			\includegraphics[width=\textwidth]{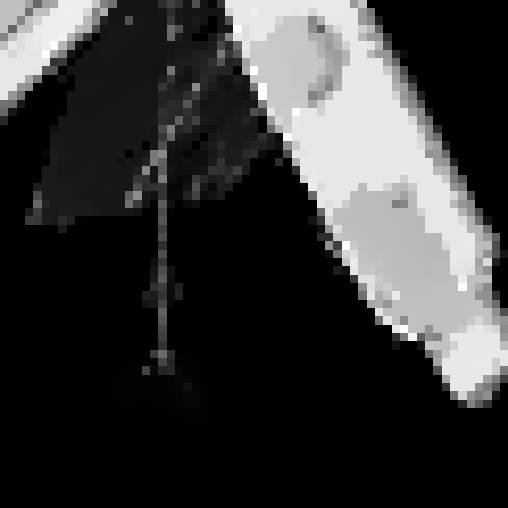}
			\caption{}
		\end{subfigure}
		\begin{subfigure}{0.24\textwidth}
			\includegraphics[width=\textwidth]{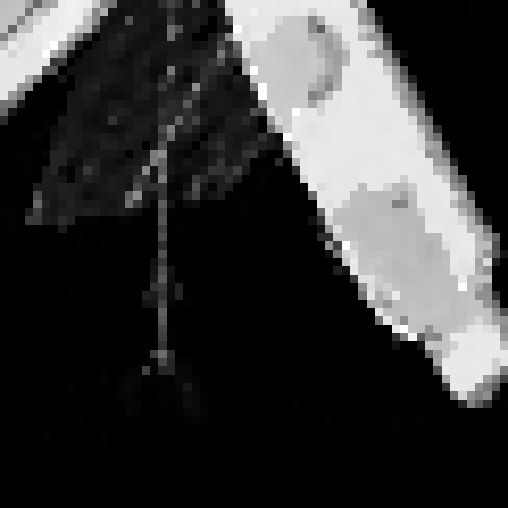}
			\caption{}
		\end{subfigure}
        \caption{Details of the restored images for Test 1: (a) detail of $\gt$ in the red box for the computed metrics, (b) graphLa\texttt{+}Tik (PSNR = 19.3937, SSIM = 0.6933, GMSD = 0.0933), (c) standard iterative scheme (PSNR = 20.6415, SSIM = 0.7836, GMSD = 0.0820), (d) final reconstruction (PSNR = 21.1788, SSIM = 0.8074, GMSD = 0.0583).}
		\label{fig:test4 detail 1}
	\end{figure}

\subsection[Test 2: Satellite deblurring with TV initialization]{Test 2: Satellite deblurring for $\bx_0^\delta$ computed by TV}

In this example, we show the importance of a good initial approximation $\bx_0^\delta$.
It is obtained via a total variation (TV) reconstruction, corresponding to the minimization problem \cref{model_eq2} with $q=1$ and a finite-difference approximation of the gradient as the operator $D$.
The solution computed after the first iteration is referred to as the graphLa\texttt{+}TV reconstruction, following the notation in \cite{bianchi2025data}.

    We consider again the previous example in \Cref{fig:test4}, using the new initial approximation \(\bx_0^\delta\) computed via TV regularization. The performance metrics reported in \Cref{Table: test_TV} show an improvement compared with those in \Cref{Table: test4}. In this case, the additional improvement obtained through further iterations is less pronounced. Nevertheless, an enhancement of the image details can still be observed, as evidenced by the reconstructed images in \Cref{fig:test_TV rec,fig:test_TV detail 1}. The performance metrics in \Cref{fig:test_TV detail 1} refer to the red-boxed subimage, as in \Cref{fig:test4 detail 1}.

    \begin{table}
    \centering
        \begin{tabular}{c|c|c|c | c}
           iterate & RRE & PSNR & SSIM & GMSD \\ \hline
                $\bx_{0}^{\delta}$ & 0.1418  & 29.96 & 0.9077 & 0.0278\\
                $\bx_{1}^{\delta}$  & 0.1076 & 32.36 & 0.9759 & 0.0124\\ 
			$\bx_{2}^{\delta}$  & 0.0975 & 33.22 & 0.9814 & 0.0096 \\ 
			$\bx_{3}^{\delta}$  & 0.0941 & 33.53 & 0.9832 & 0.0093 \\ 
		$\bx_{n_{\mathrm{std}}}^{\delta}$  & 0.0939 & 33.54 & 0.9834 & 0.0091 \\ \hline
        $\bx_{n_{\mathrm{std}}+1}^{\delta}$  &  0.0937 & 33.56 & 0.9811 & 0.0087\\ 
		$\bx_{n_{\mathrm{std}}+2}^{\delta}$  &  0.0936 & 33.57 & 0.9783 & 0.0085\\ 
        \end{tabular}
		\caption{Performance metrics for Test 2. The upper part is the standard iterative scheme with $\bx_{0}^{\delta}$ computed by TV. The lower part is the error-equation stage of the mixed iterative scheme, started from $\bx_{n_{\mathrm{std}}}^{\delta}=\bx_{4}^{\delta}$.}
		\label{Table: test_TV}
	\end{table}

\begin{figure}
        \centering
		\begin{subfigure}{0.25\textwidth}
			\includegraphics[width=\textwidth]{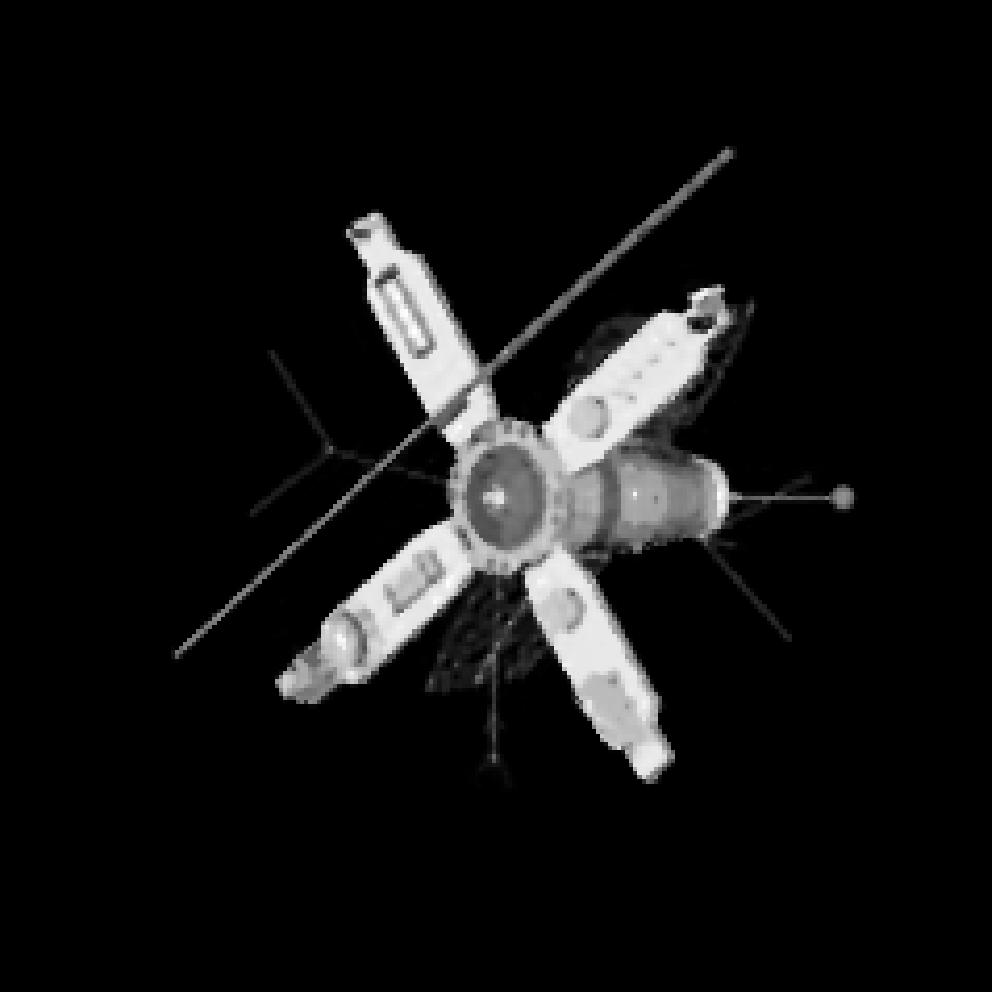}
			\caption{}
		\end{subfigure}\qquad
		\begin{subfigure}{0.25\textwidth}
			\includegraphics[width=\textwidth]{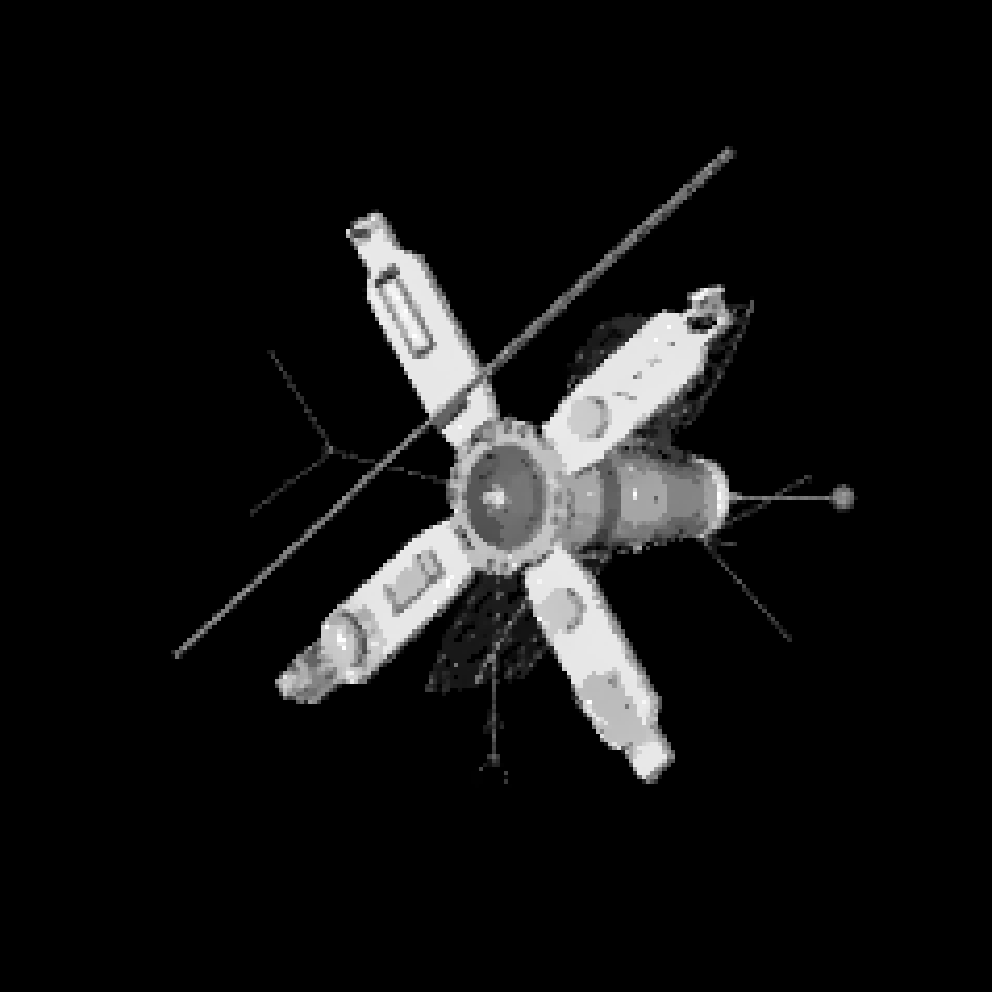}
			\caption{}
		\end{subfigure}\qquad
		\begin{subfigure}{0.25\textwidth}
			\includegraphics[width=\textwidth]{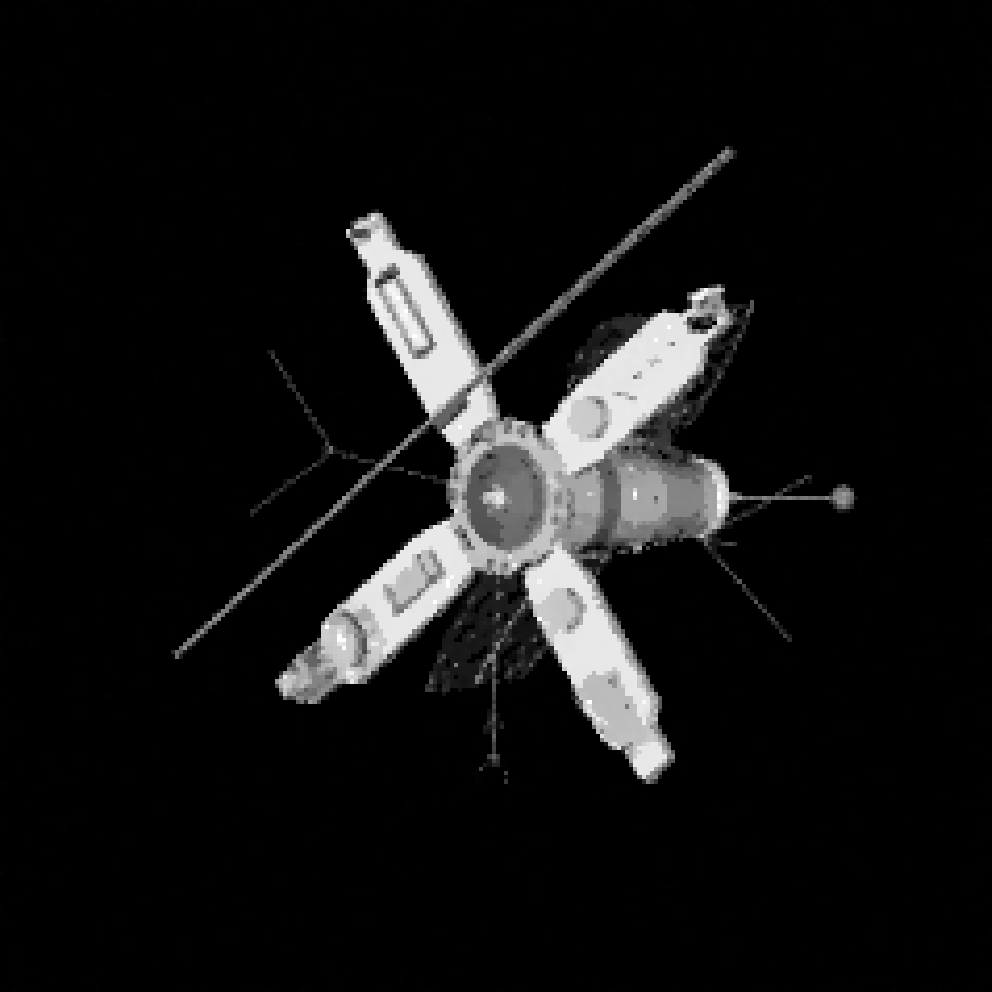}
			\caption{}
		\end{subfigure}
		\caption{Restored images for Test 2: (a) graphLa\texttt{+}TV ($\bx_{1}^{\delta}$), (b) standard iterative scheme ($\bx_{n_{\mathrm{std}}}^{\delta}=\bx_{4}^{\delta}$), (c) final reconstruction with mixed iterative scheme ($\bx_{n_{\mathrm{std}}+2}^{\delta}$).}
		\label{fig:test_TV rec}
	\end{figure}
	
	\begin{figure}
        \centering
        \begin{subfigure}{0.24\textwidth}
			\includegraphics[width=\textwidth]{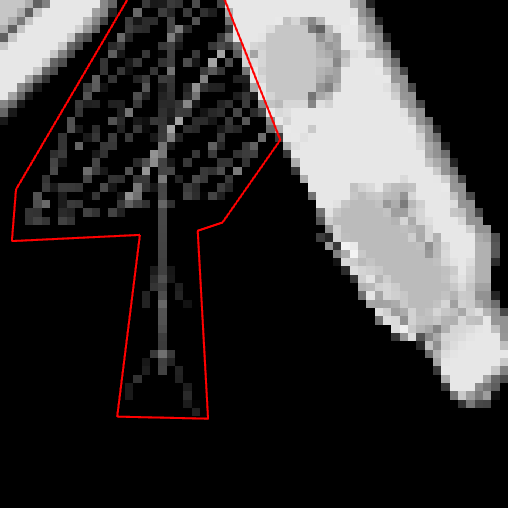}
			\caption{}
		\end{subfigure}
		\begin{subfigure}{0.24\textwidth}
			\includegraphics[width=\textwidth]{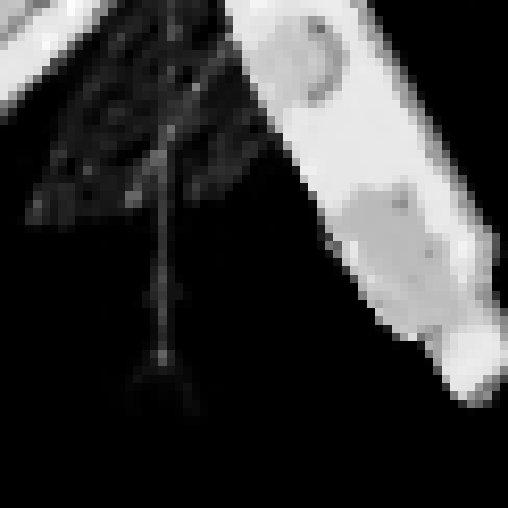}
			\caption{}
		\end{subfigure}
		\begin{subfigure}{0.24\textwidth}
			\includegraphics[width=\textwidth]{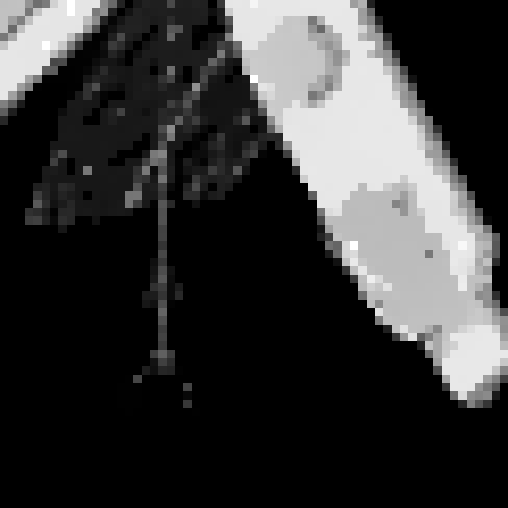}
			\caption{}
		\end{subfigure}
		\begin{subfigure}{0.24\textwidth}
			\includegraphics[width=\textwidth]{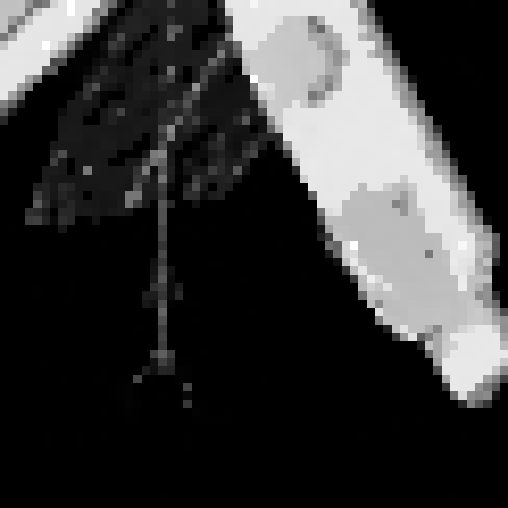}
			\caption{}
		\end{subfigure}
        \caption{Details of the restored images for Test 2: (a) detail of $\gt$ in the red box for the computed metrics, (b) graphLa\texttt{+}TV (PSNR = 20.9493, SSIM = 0.8014, GMSD = 0.0610), (c) standard iterative scheme (PSNR = 21.5745, SSIM = 0.8459, GMSD = 0.0448), (d) final reconstruction (PSNR = 21.6567, SSIM = 0.8473, GMSD = 0.0435).}
		\label{fig:test_TV detail 1}
	\end{figure}

\subsection{Test CT: a computed tomography example}\label{ssec:CT}

In this numerical test, the image is a $256 \times 256$ grayscale image from a real CT scan, taken from the AAPM Low Dose CT Grand Challenge dataset~\cite{Moen2021-me}. The sinogram is computed using AIRToolsII~\cite{Hansen2017-bb}. We consider a simulated CT acquisition with 60 equispaced projection angles in the interval $[0,\pi]$ and 362 detector bins. To simulate real-world scenarios, the sinogram is corrupted with additive Gaussian noise at a noise level of $1\%$. The original image and the noisy sinogram are shown in \Cref{fig:test_CT_REAL}.

The parameter $\sigma$ used for the construction of the graph Laplacian is set to $\sigma=5\times10^{-3}$.
We apply the standard iterative scheme \cref{eq:standard-iter}, with the initial approximation $\bx_{0}^{\delta}$ computed via FBP. The standard iterative scheme is performed for two iterations. Subsequently, one error-equation step in the mixed scheme is applied, where the Laplacian is constructed using the back-projected residual obtained by FBP with Hann filtering.

The performance metrics reported in \Cref{Table: test_CT_REAL}, together with the restored images shown in \Cref{fig:test_CT_REAL rec}, support the effectiveness of the proposed iterative schemes. In particular, the close-up views in \Cref{fig:test_CT_REAL detail 1} highlight the ability of the error-equation stage to attenuate noise while preserving relevant structural details.
 
\begin{figure}
        \centering
		\begin{subfigure}{0.25\textwidth}
			\includegraphics[width=\textwidth]{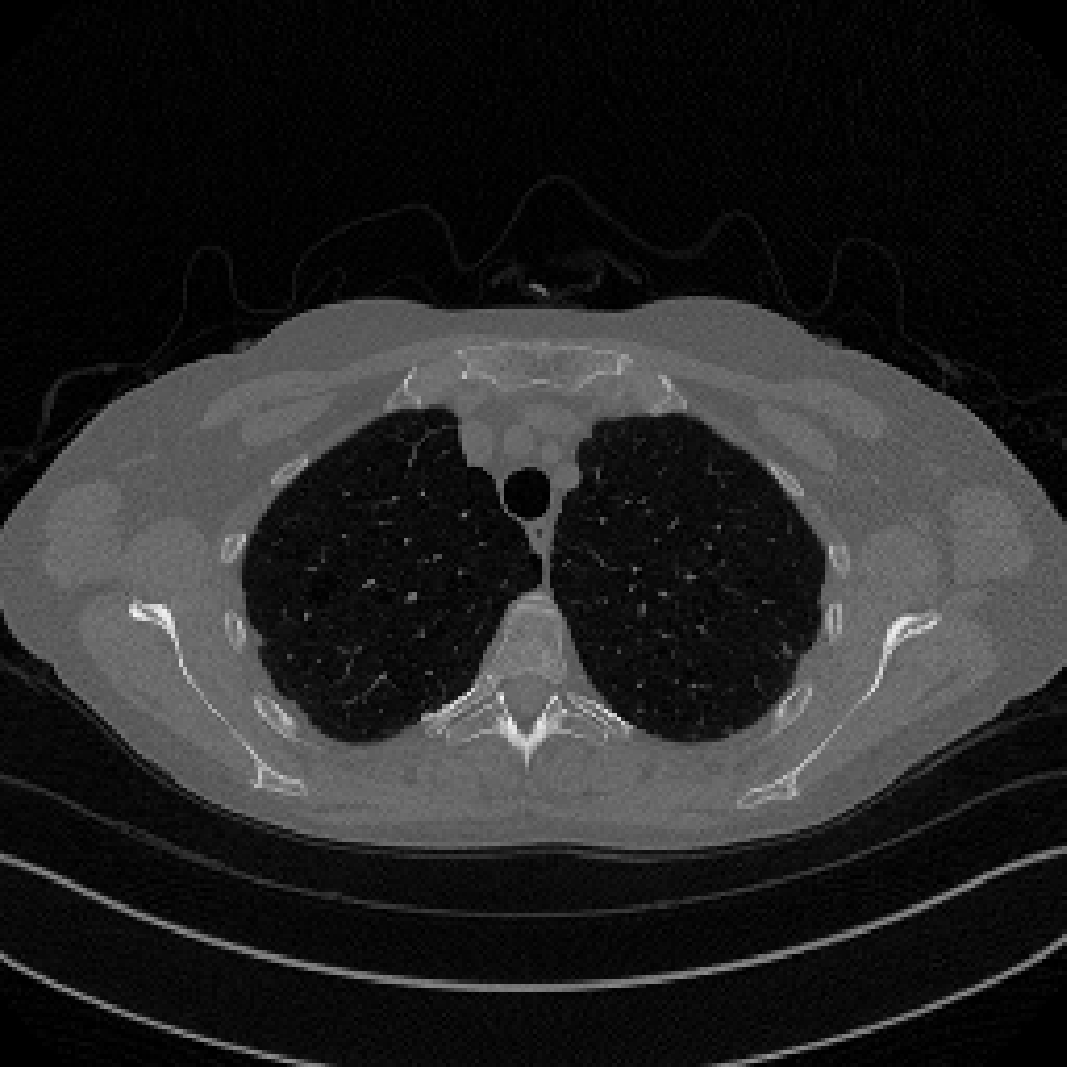}
			\caption{}
		\end{subfigure}\qquad
		\begin{subfigure}{0.25\textwidth}
			\includegraphics[clip,trim={2cm 1.5cm 2cm 1.5cm}, width=\textwidth, height=\textwidth]{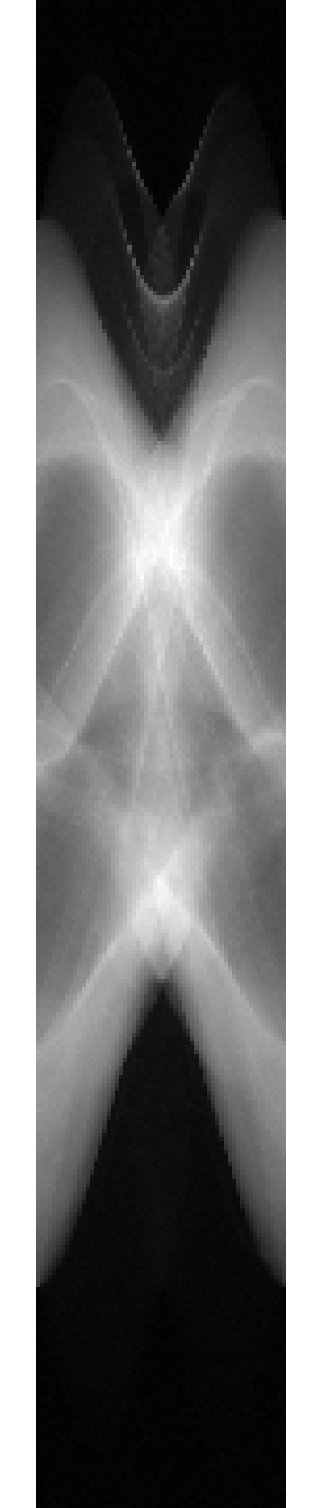}
			\caption{}
		\end{subfigure}
		\caption{Test CT: (a) original image, (b) observed sinogram corrupted by noise.}
		\label{fig:test_CT_REAL}
	\end{figure}

\begin{table}
\centering
	\begin{tabular}{c|c|c|c | c}
       iterate & RRE & PSNR & SSIM & GMSD\\ \hline
		 $\bx_{0}^{\delta}$ & 0.3373 & 22.41 & 0.3389 & 0.1633 \\
    	$\bx_{1}^{\delta}$  & 0.1414 & 29.96 & 0.7532 & 0.0523 \\ 
		$\bx_{n_{\mathrm{std}}}^{\delta}$  & 0.1356 & 30.32 & 0.7828 & 0.0587 \\ \hline
		$\bx_{n_{\mathrm{std}}+1}^{\delta}$  & 0.1352 & 30.35 & 0.7644 & 0.0361 \\
	\end{tabular}
	\caption{Performance metrics for Test CT. The upper part is the standard iterative scheme with $\bx_{0}^{\delta}$ computed by FBP. Below the horizontal line is the error-equation stage of the mixed iterative scheme, started from $\bx_{n_{\mathrm{std}}}^{\delta}=\bx_{2}^{\delta}$.}
    \label{Table: test_CT_REAL}
\end{table}
    \begin{figure}
        \centering
		\begin{subfigure}{0.25\textwidth}
			\includegraphics[width=\textwidth]{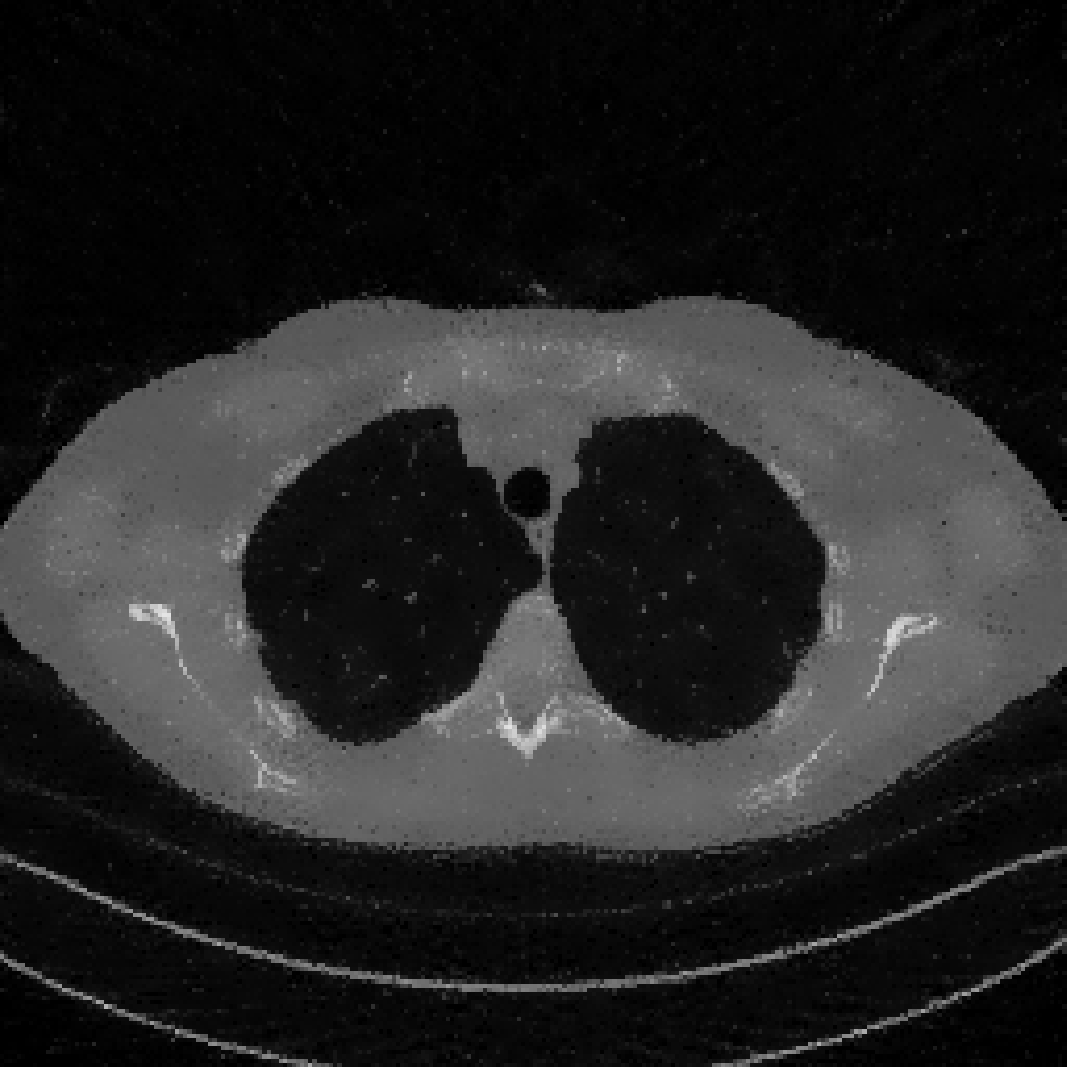}
			\caption{}
			\end{subfigure}\qquad
		\begin{subfigure}{0.25\textwidth}
			\includegraphics[width=\textwidth]{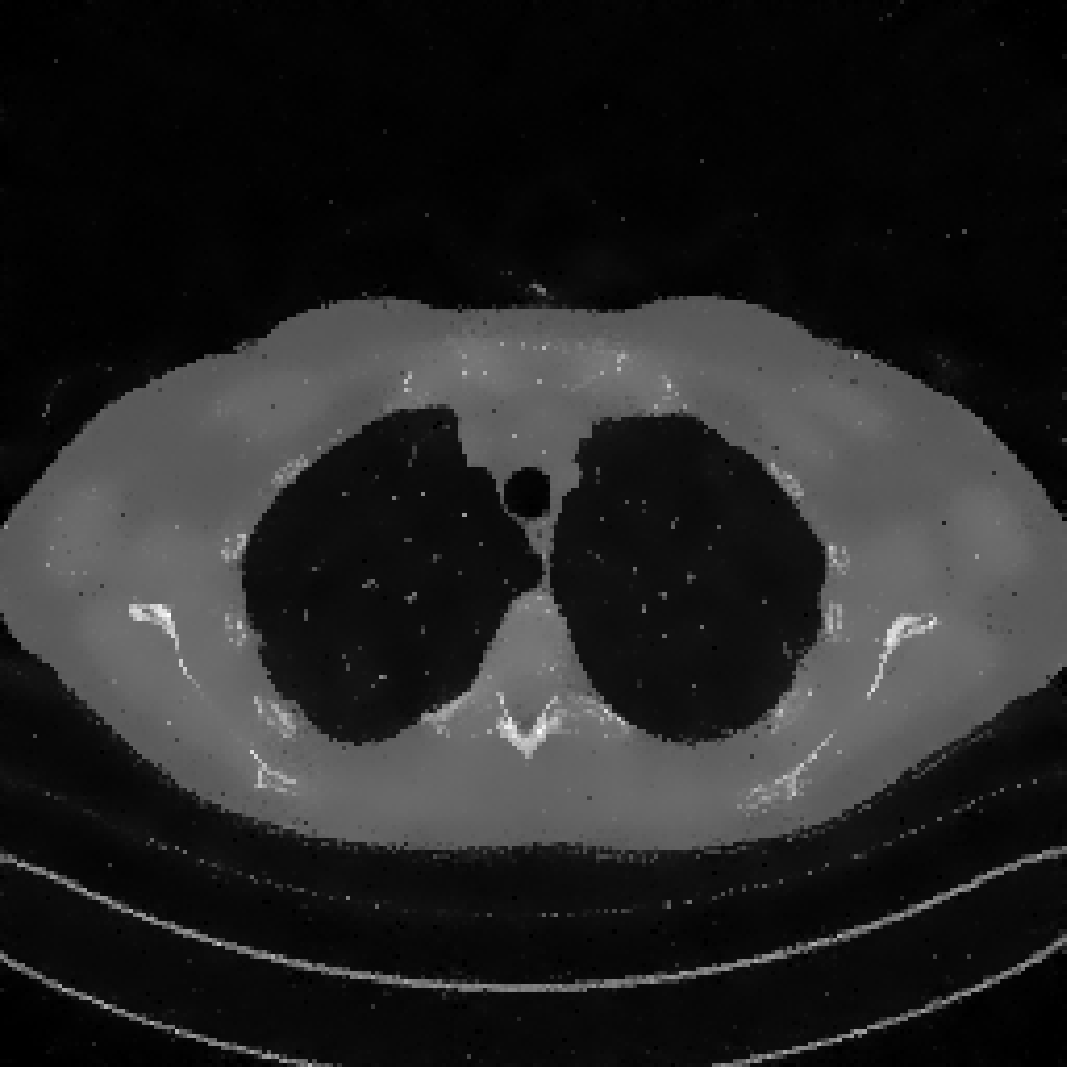}
			\caption{}			
			\end{subfigure}\qquad
		\begin{subfigure}{0.25\textwidth}
			\includegraphics[width=\textwidth]{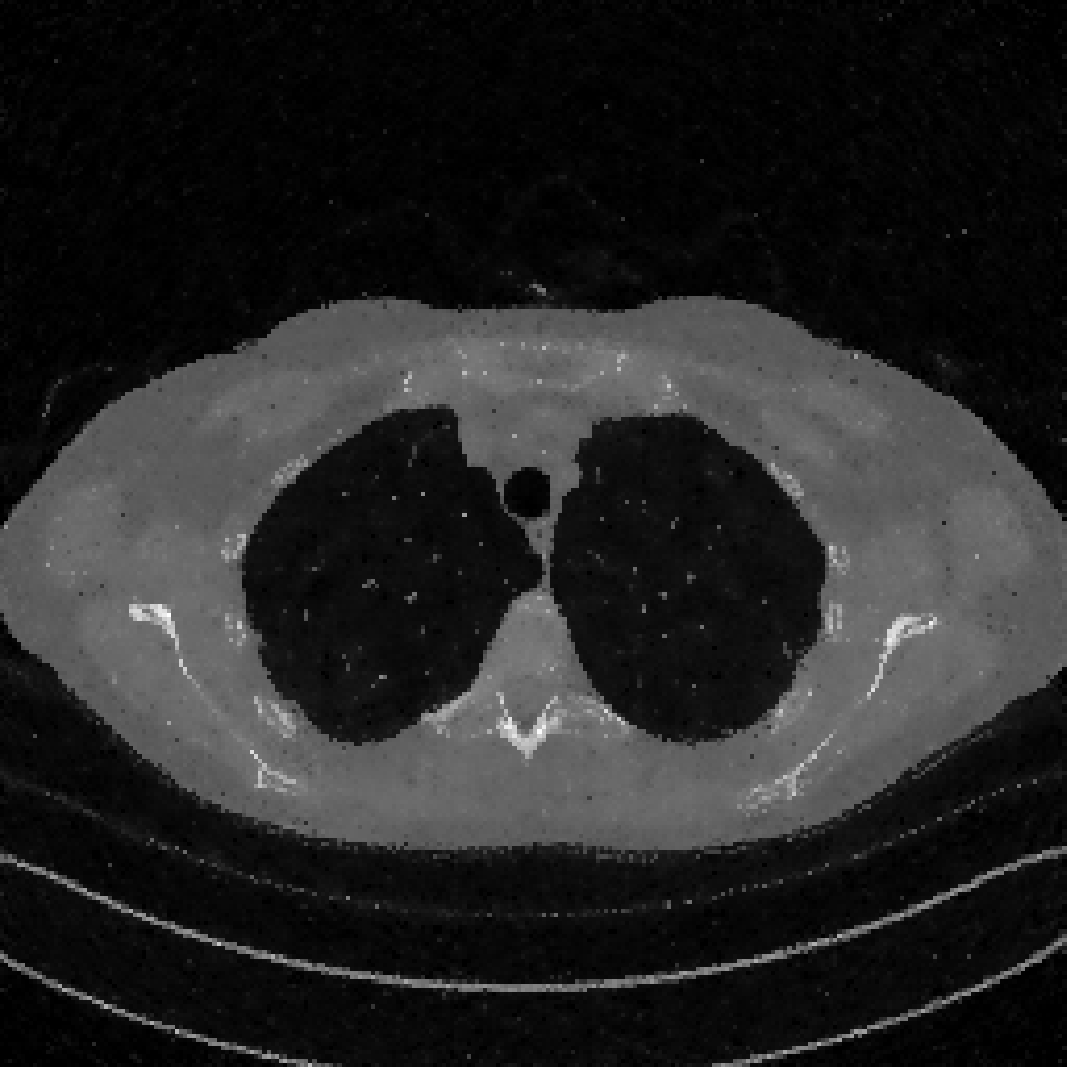}
			\caption{}
			\end{subfigure}
		\caption{Restored images for Test CT: (a) graphLa\texttt{+}FBP ($\bx_{1}^{\delta}$), (b) standard iterative scheme ($\bx_{n_{\mathrm{std}}}^{\delta}=\bx_{2}^{\delta}$), (c) final reconstruction with mixed iterative scheme ($\bx_{n_{\mathrm{std}}+1}^{\delta}$).}
        \label{fig:test_CT_REAL rec}
    \end{figure}

	    \begin{figure}
	        \centering
			\begin{subfigure}{0.24\textwidth}
	        \includegraphics[width=\textwidth]{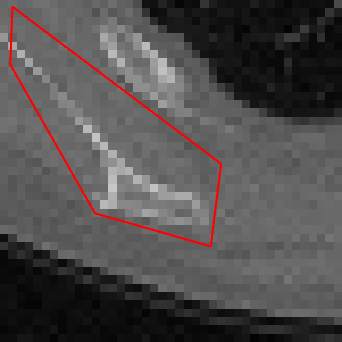}
				\caption{}
		\end{subfigure}
		\begin{subfigure}{0.24\textwidth}
			\includegraphics[width=\textwidth]{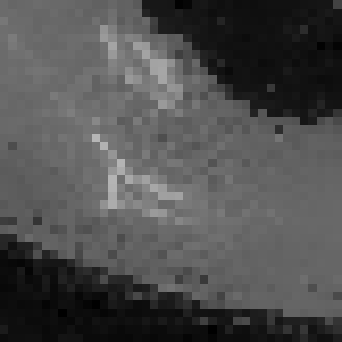}
			\caption{}
		\end{subfigure}
		\begin{subfigure}{0.24\textwidth}
			\includegraphics[width=\textwidth]{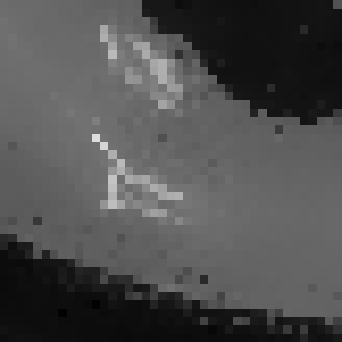}
			\caption{}			
		\end{subfigure}
		\begin{subfigure}{0.24\textwidth}
			\includegraphics[width=\textwidth]{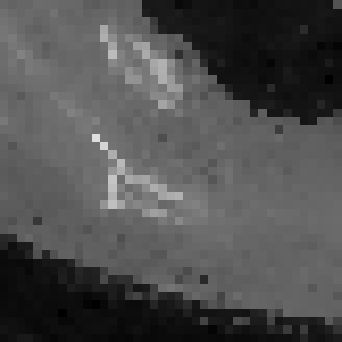}
			\caption{}
		\end{subfigure}
        \caption{Details of the restored images for Test CT: (a) detail of $\gt$ in the red box for the computed metrics, (b) graphLa\texttt{+}FBP (PSNR = 25.7133, SSIM = 0.7064, GMSD = 0.0590), (c) standard iterative scheme (PSNR = 25.7194, SSIM = 0.7361, GMSD = 0.0765), (d) final reconstruction (PSNR = 26.2945, SSIM = 0.7689, GMSD = 0.0337).}
        \label{fig:test_CT_REAL detail 1}
    \end{figure}

\subsection[Deblurring and denoising with neural network initialization]{Deblurring and denoising for $\bx_0^\delta$ computed by a neural network}

In this test, we use a deep Wiener convolution network \cite{dong2020deep} to compute the initial reconstruction $\bx_0^\delta$. We show that, even when using a state-of-the-art reconstruction method, the iterative graph Laplacian approach is still able to further improve the reconstruction quality.

The test images are grayscale images with intensity values scaled to the interval $[0,1]$. They are blurred using a $17\times17$ Gaussian kernel with standard deviation $5$, and additive Gaussian noise with standard deviation $0.01$ is subsequently added. For all iterations, we set $R=4$ and $\sigma=0.002$.

The mixed iterative scheme is then applied, consisting of $n_{\mathrm{std}}=2$ standard iterations followed by $3$ error-equation iterations.

The original image, the blurred image, $\bx_0^\delta$ (the initial approximation), $\bx_{n_{\mathrm{std}}}^{\delta}$ (the reconstruction after the standard iterative scheme), and $\bx_{n_{\mathrm{std}}+3}^{\delta}$ (the final reconstruction after the mixed scheme) are shown in \Cref{fig:network_test}. We also show details in a zoomed-in region in \Cref{fig:network_test_zoom}. The corresponding performance metrics are reported in \Cref{Table:network_test}.

The iterated graph Laplacian improves all four metrics (RRE, PSNR, SSIM, and GMSD) for both images. Although the mixed scheme provides only modest gains for the first image (House Front), it leads to a substantial improvement for the second image (Windows with Flowers). The benefits are particularly pronounced in the recovery of structural details, as shown in \Cref{fig:network_test_zoom} and reflected by the metrics reported in the caption.

\begin{figure}
    \centering
    
    \begin{subfigure}{0.19\textwidth}
    \includegraphics[width=\textwidth]{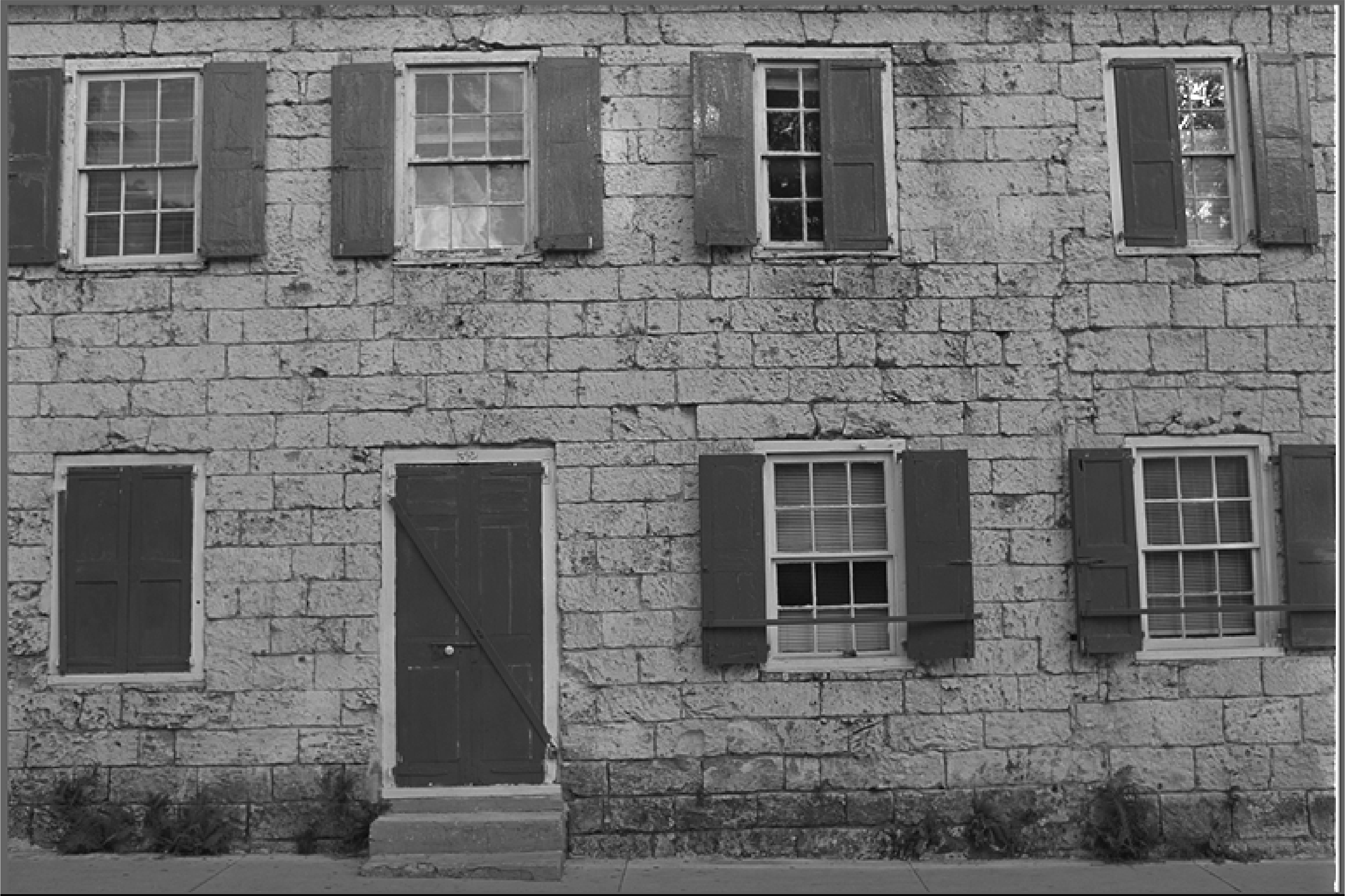}
    \includegraphics[width=\textwidth]{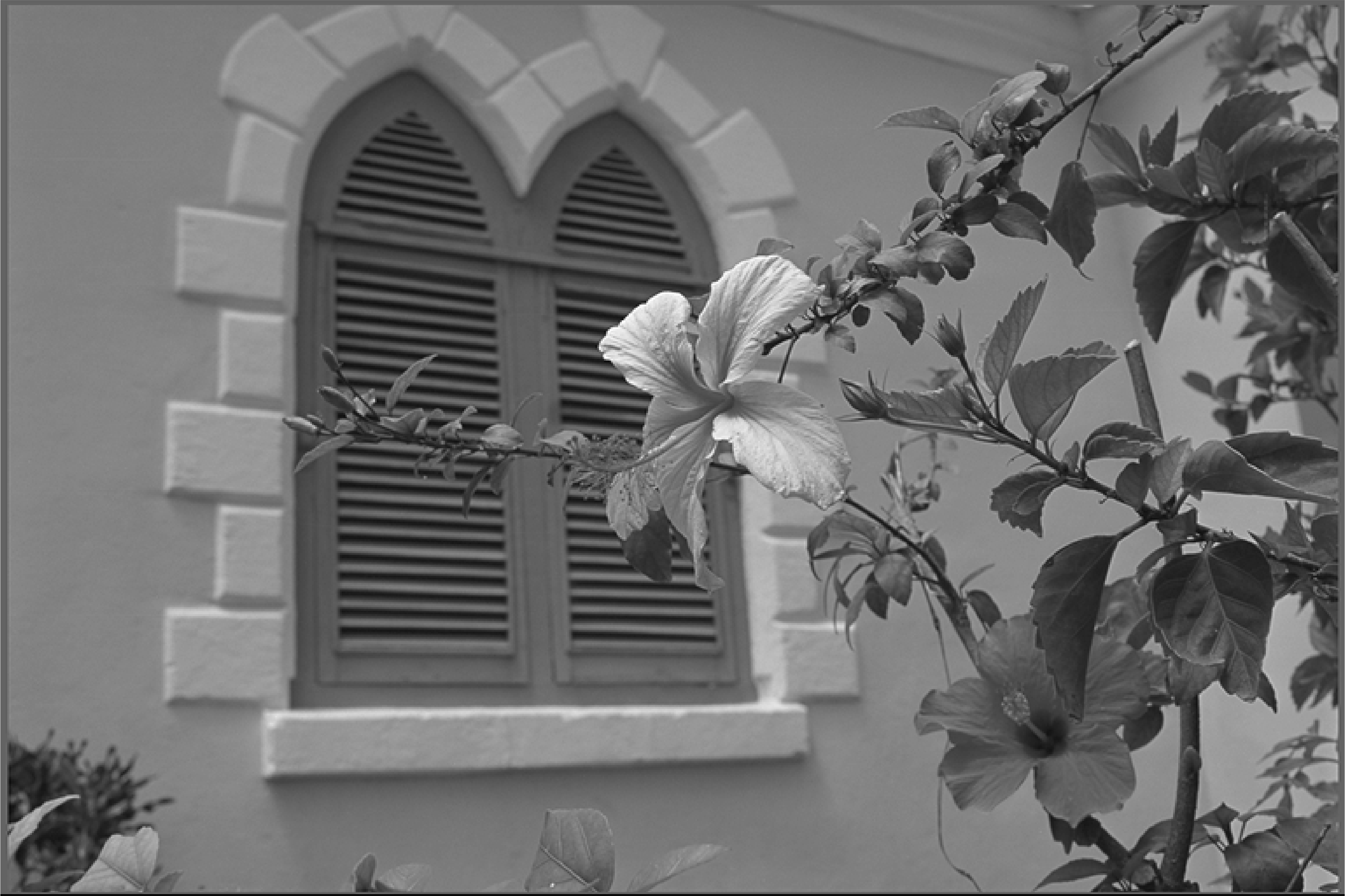}
    \caption{}
    \end{subfigure}
    \begin{subfigure}{0.19\textwidth}
    \includegraphics[width=\textwidth]{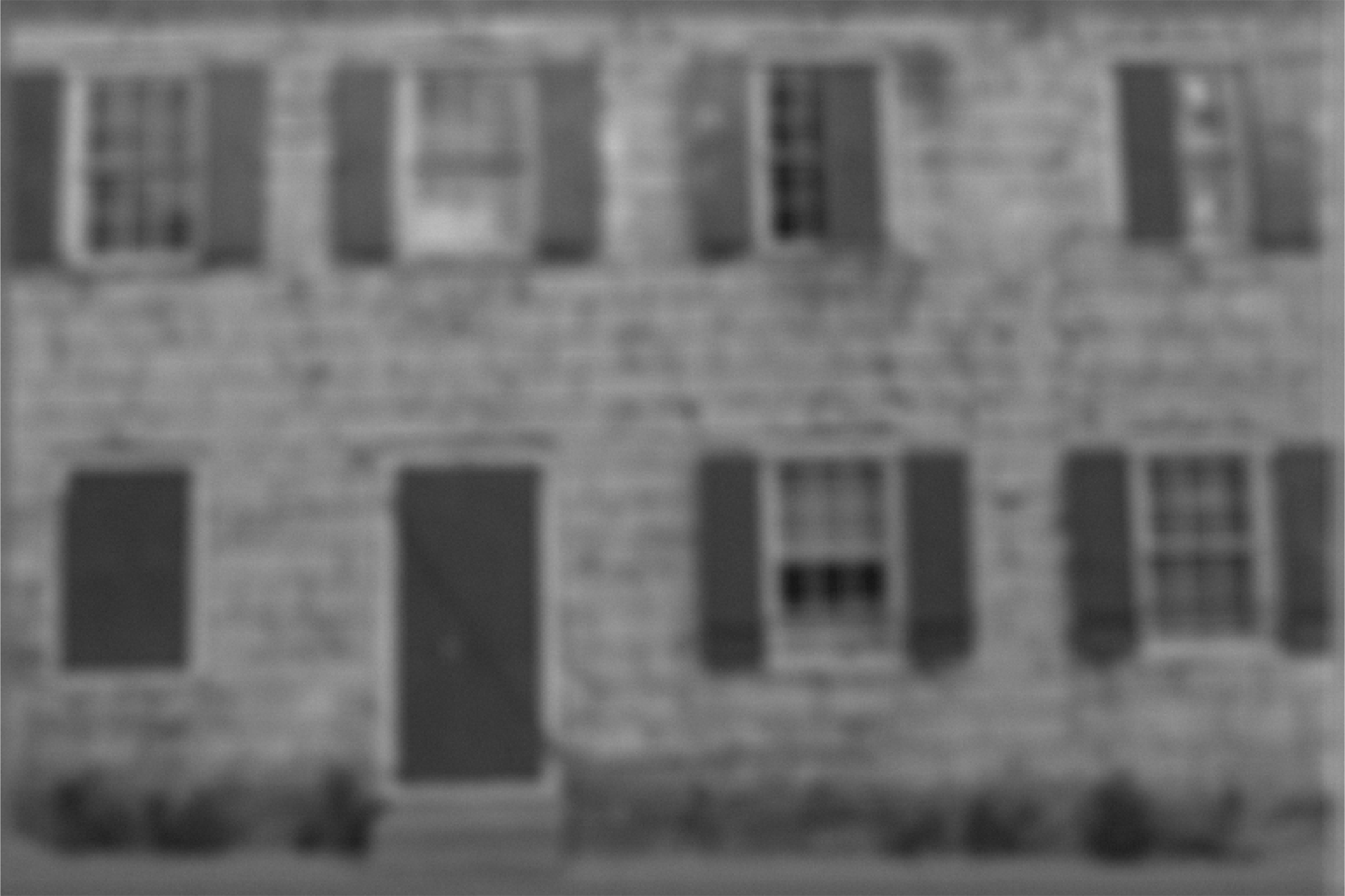}
    \includegraphics[width=\textwidth]{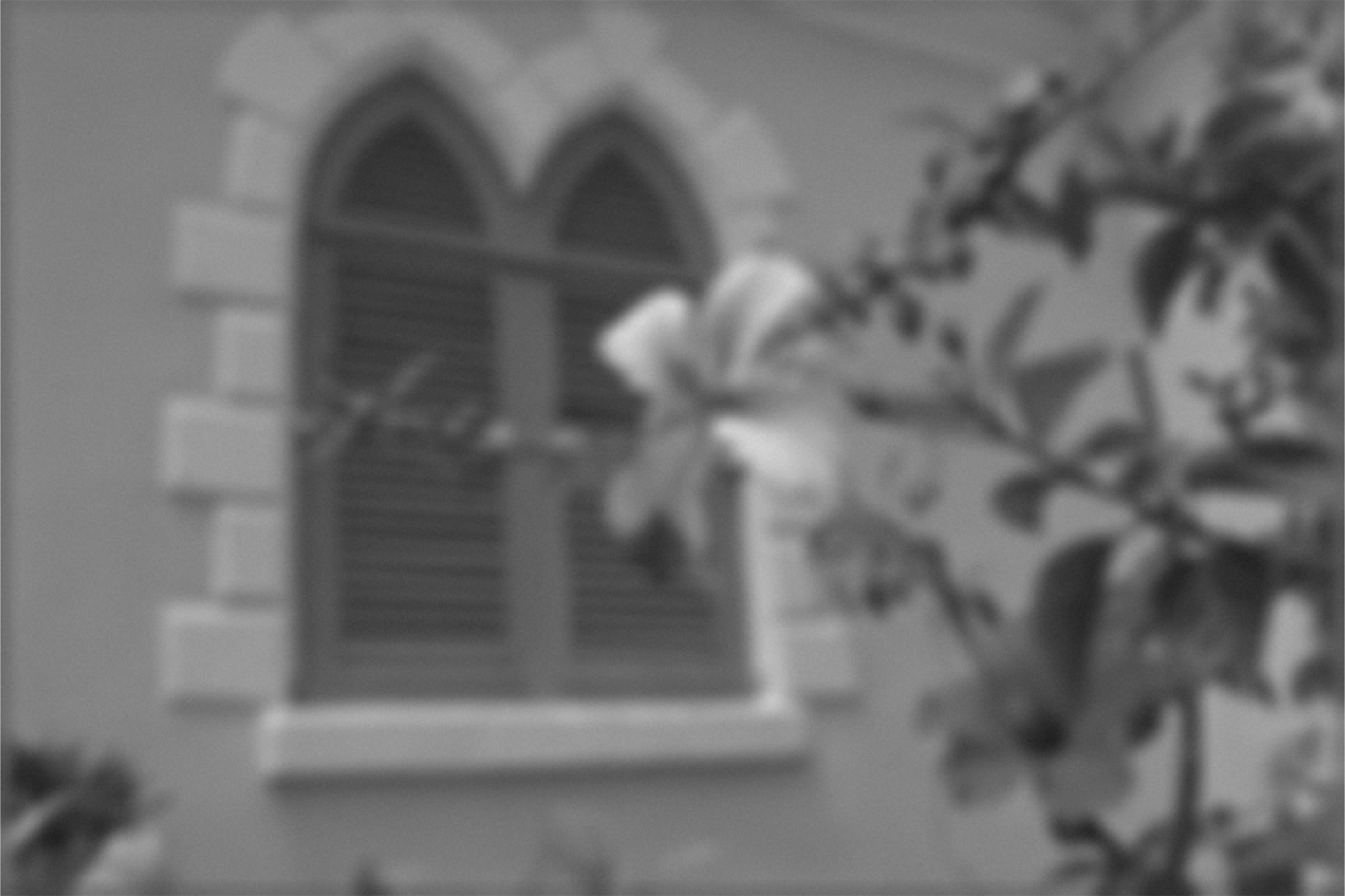}
    \caption{}
    \end{subfigure}
    \begin{subfigure}{0.19\textwidth}
    \includegraphics[width=\textwidth]{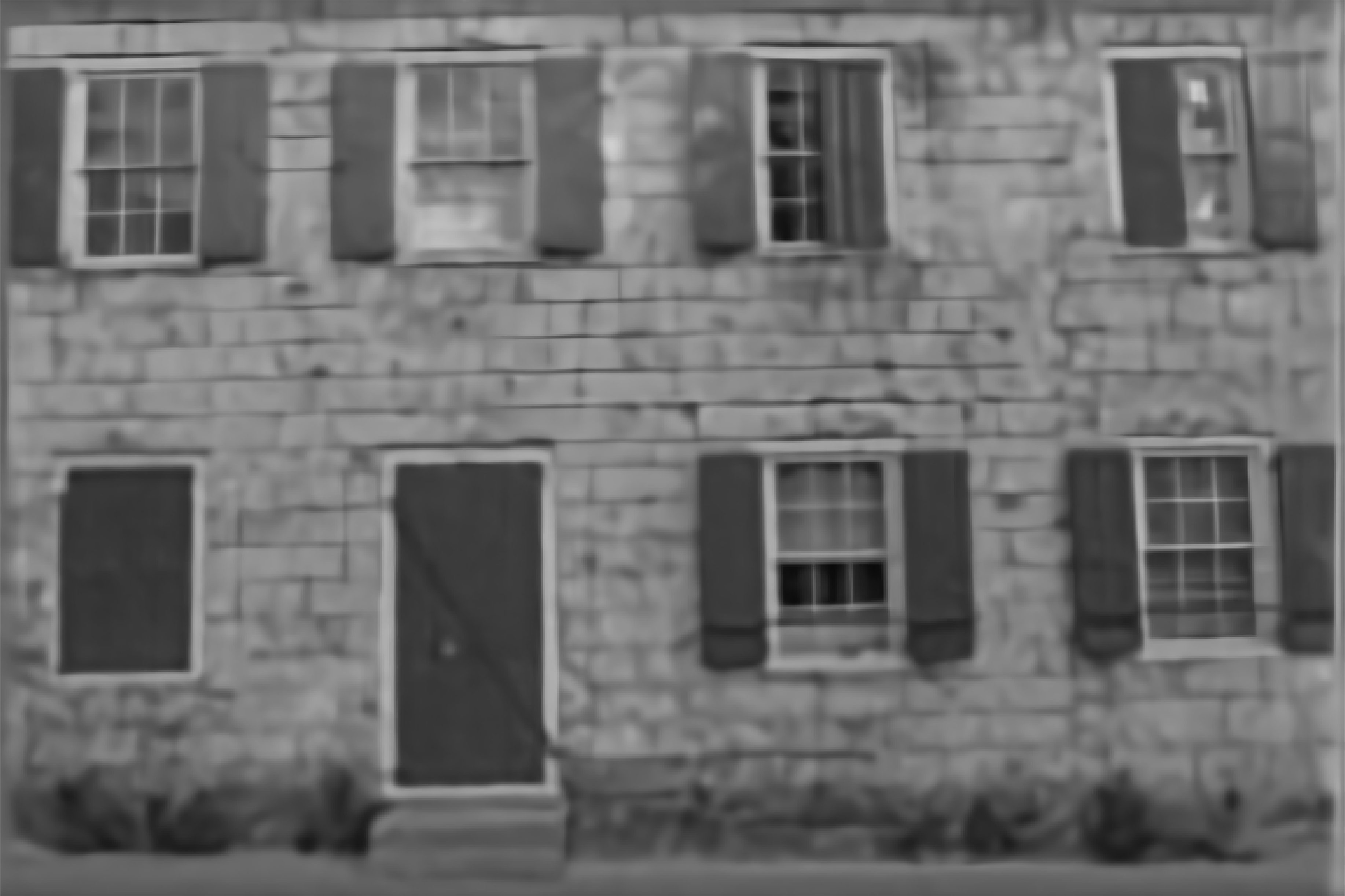}
    \includegraphics[width=\textwidth]{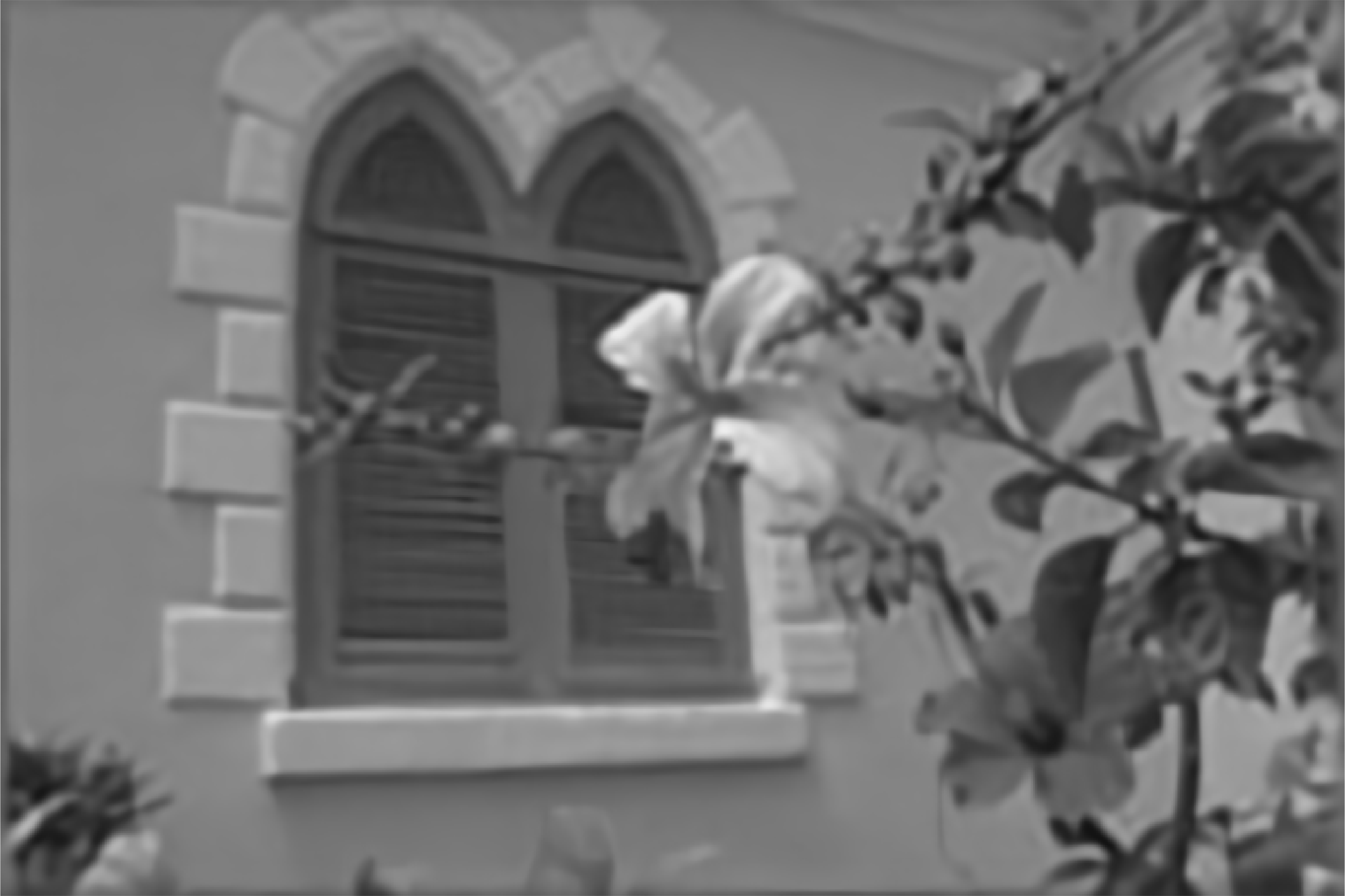}
    \caption{}
    \end{subfigure}
    \begin{subfigure}{0.19\textwidth}
    \includegraphics[width=\textwidth]{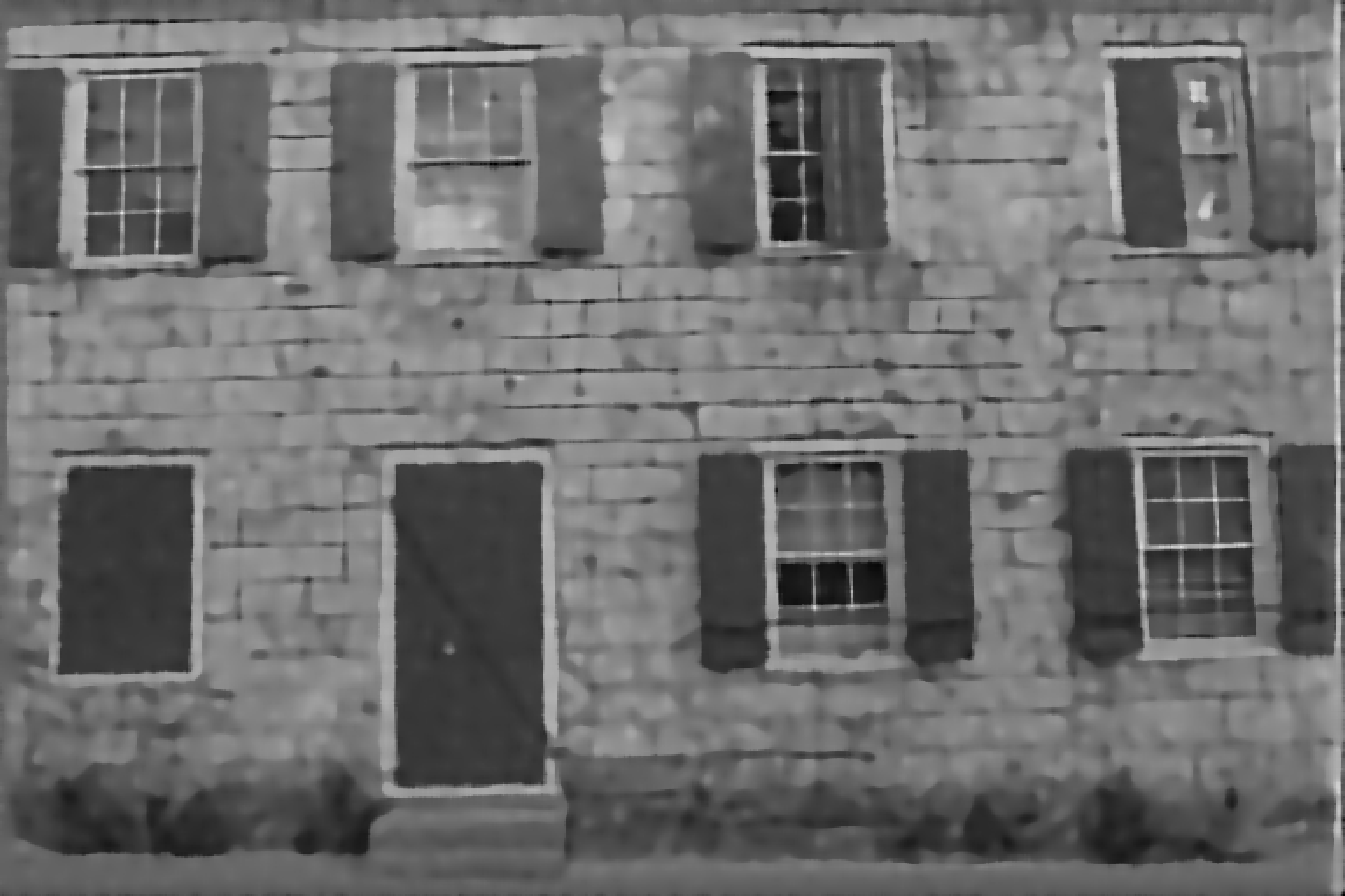}
    \includegraphics[width=\textwidth]{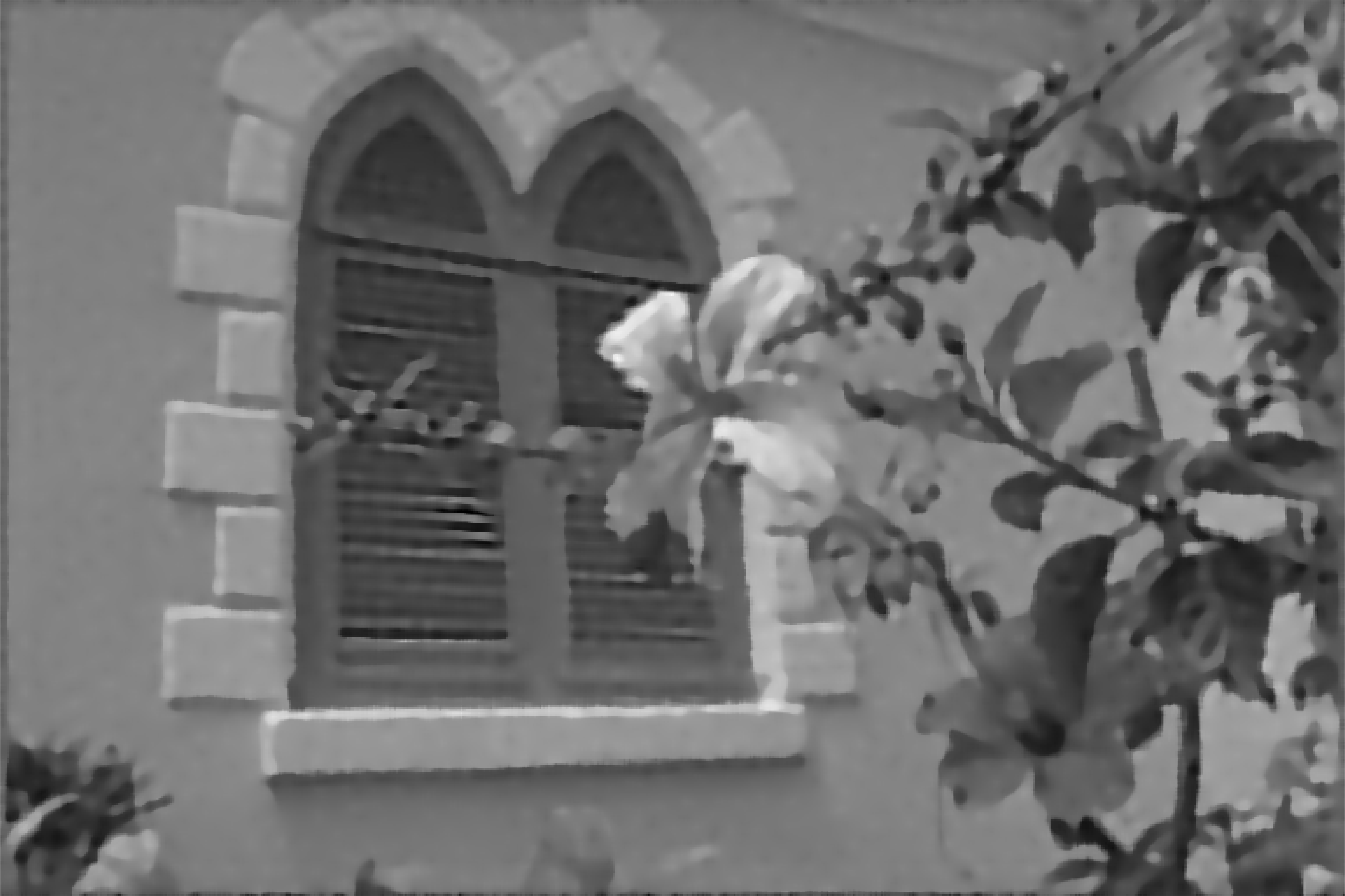}
    \caption{}
    \end{subfigure}
    \begin{subfigure}{0.19\textwidth}
    \includegraphics[width=\textwidth]{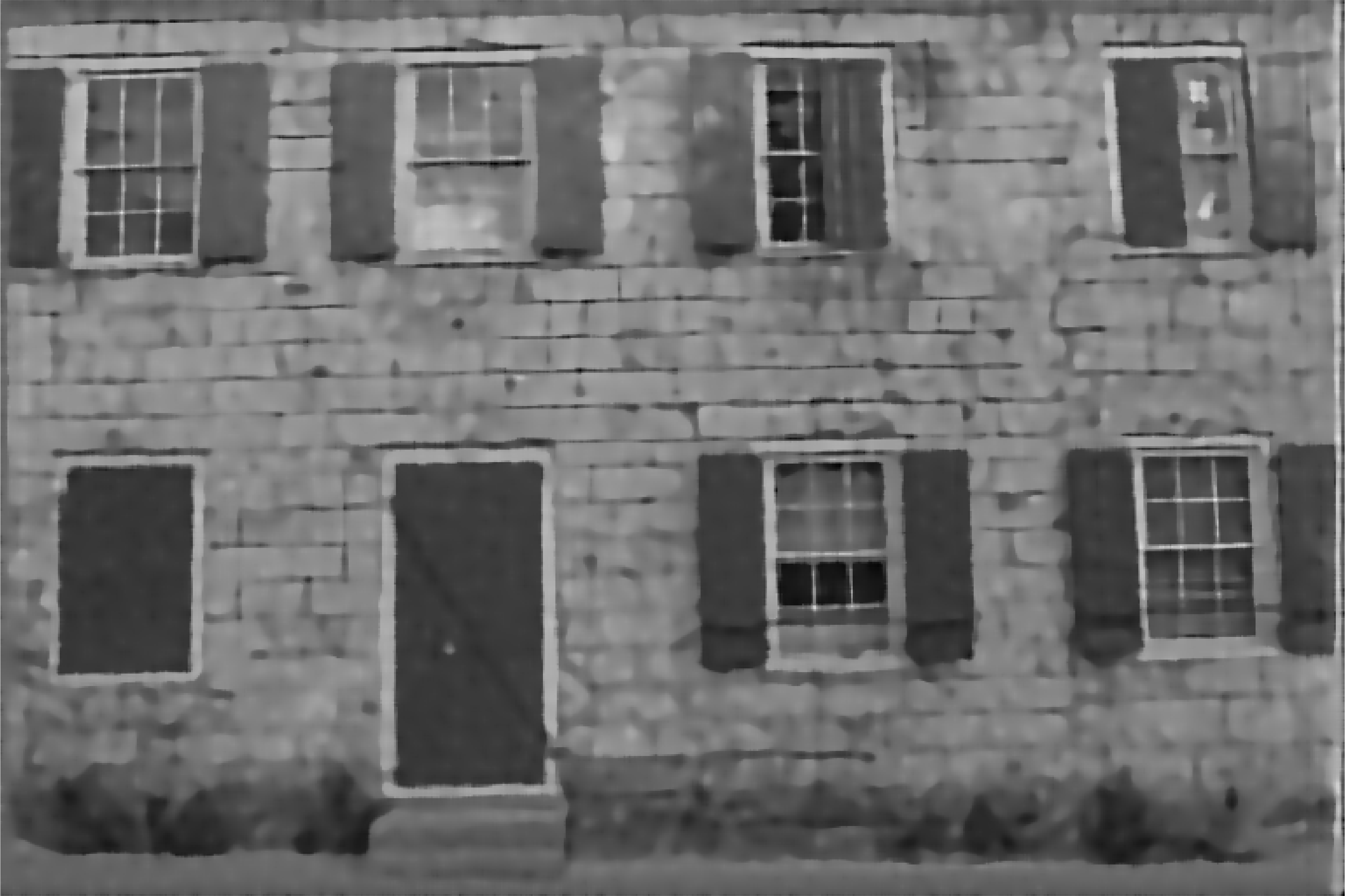}
    \includegraphics[width=\textwidth]{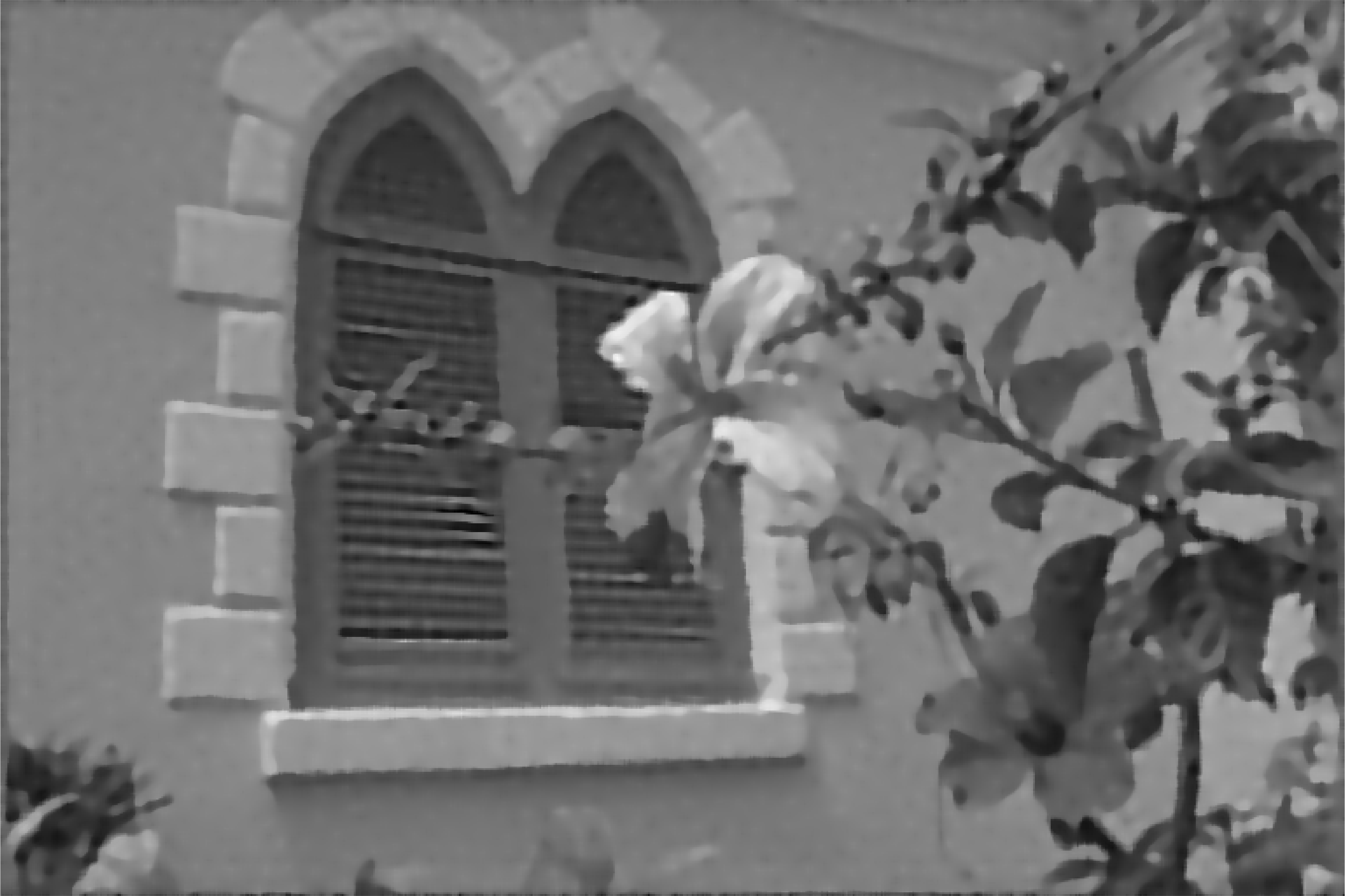}
    \caption{}
    \end{subfigure}
    \caption{Image deblurring and denoising with neural network initialization: (a) original image, (b) blurred and noisy image, (c) network reconstruction, (d) standard iterative scheme, (e) final reconstruction.}
    \label{fig:network_test}
\end{figure}

\begin{figure}
    \centering
    
    \begin{subfigure}{0.19\textwidth}
    \includegraphics[width=\textwidth]{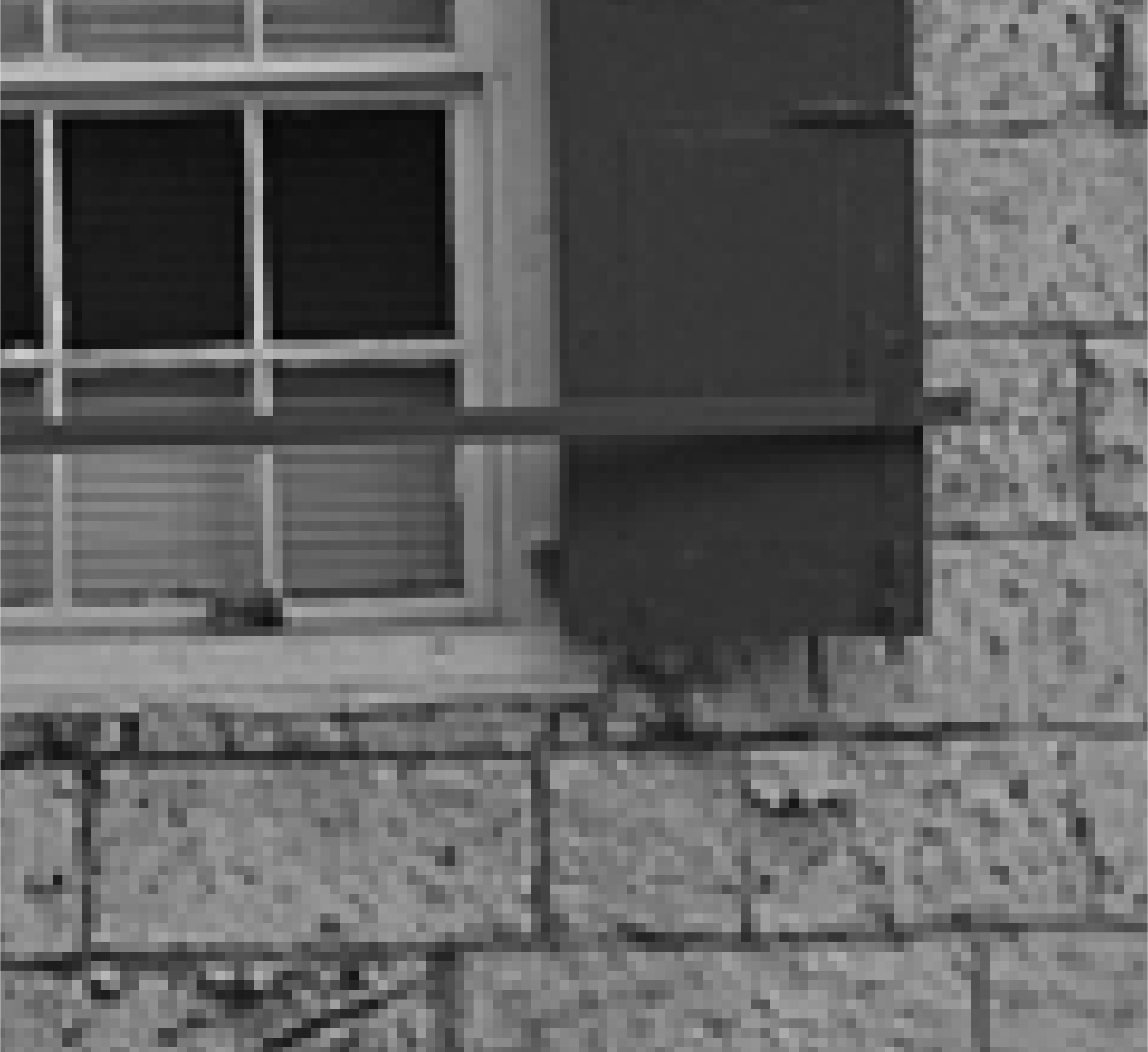}
    \includegraphics[width=\textwidth]{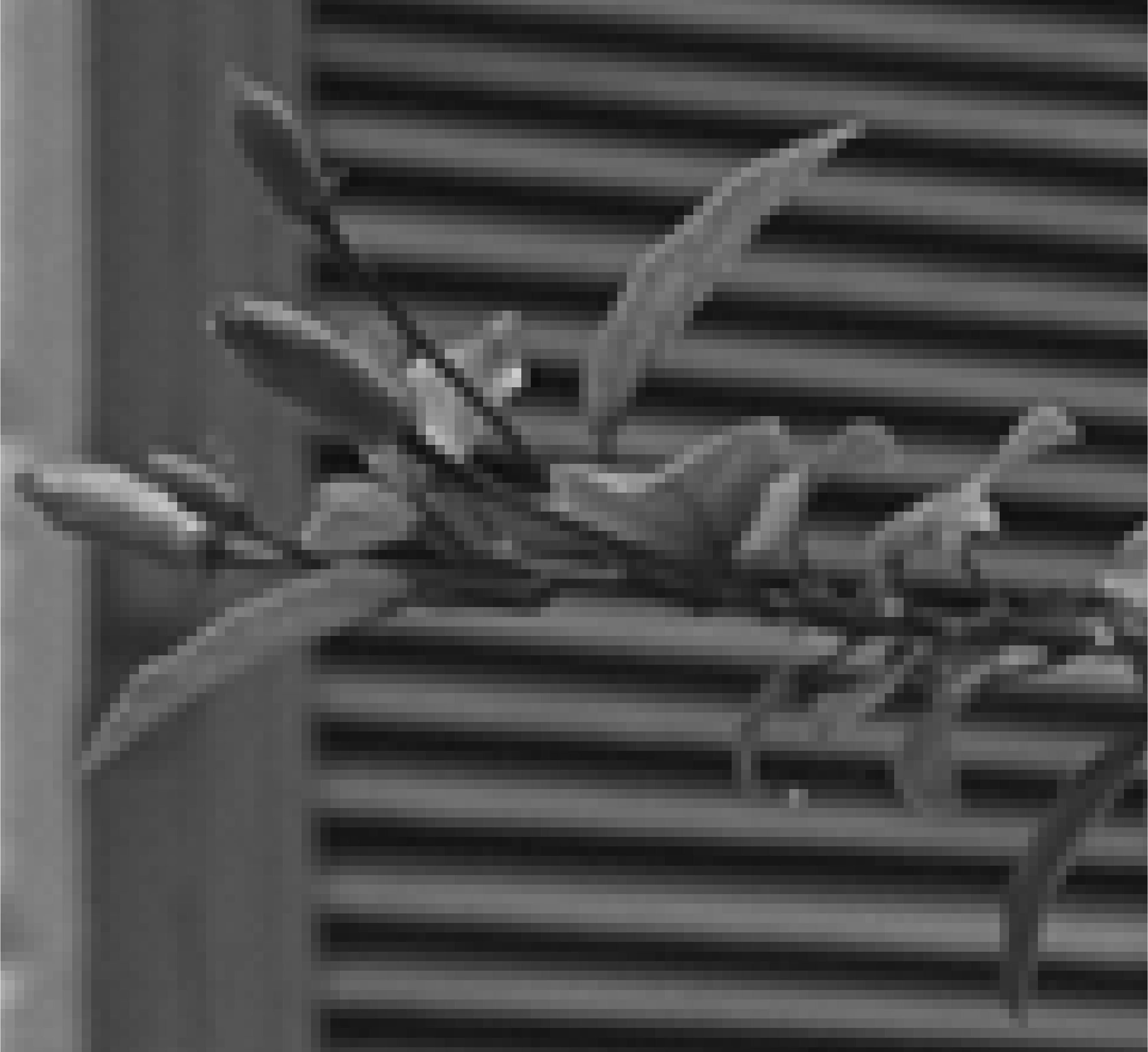}
    \caption{}
    \end{subfigure}
    \begin{subfigure}{0.19\textwidth}
    \includegraphics[width=\textwidth]{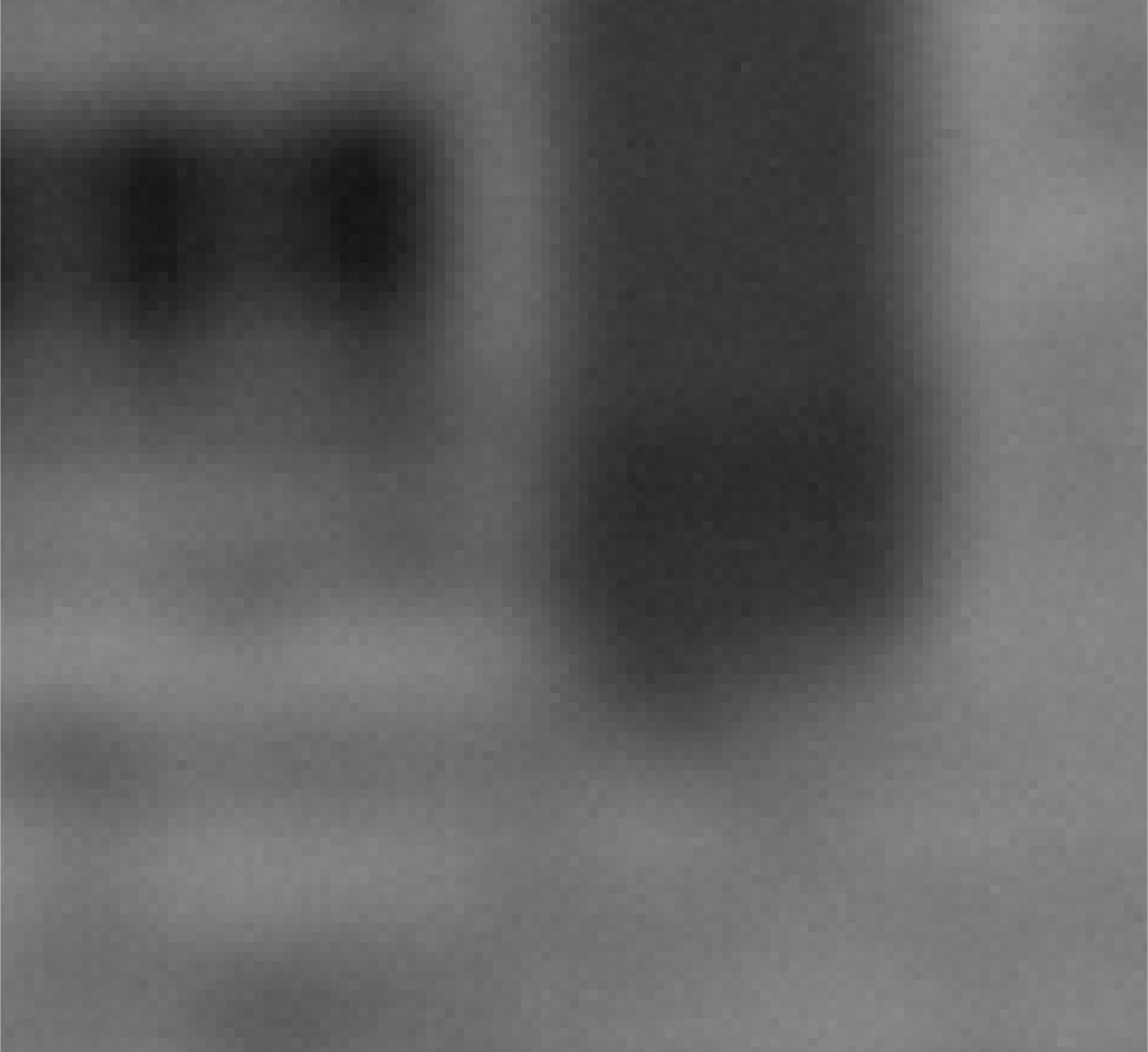}
    \includegraphics[width=\textwidth]{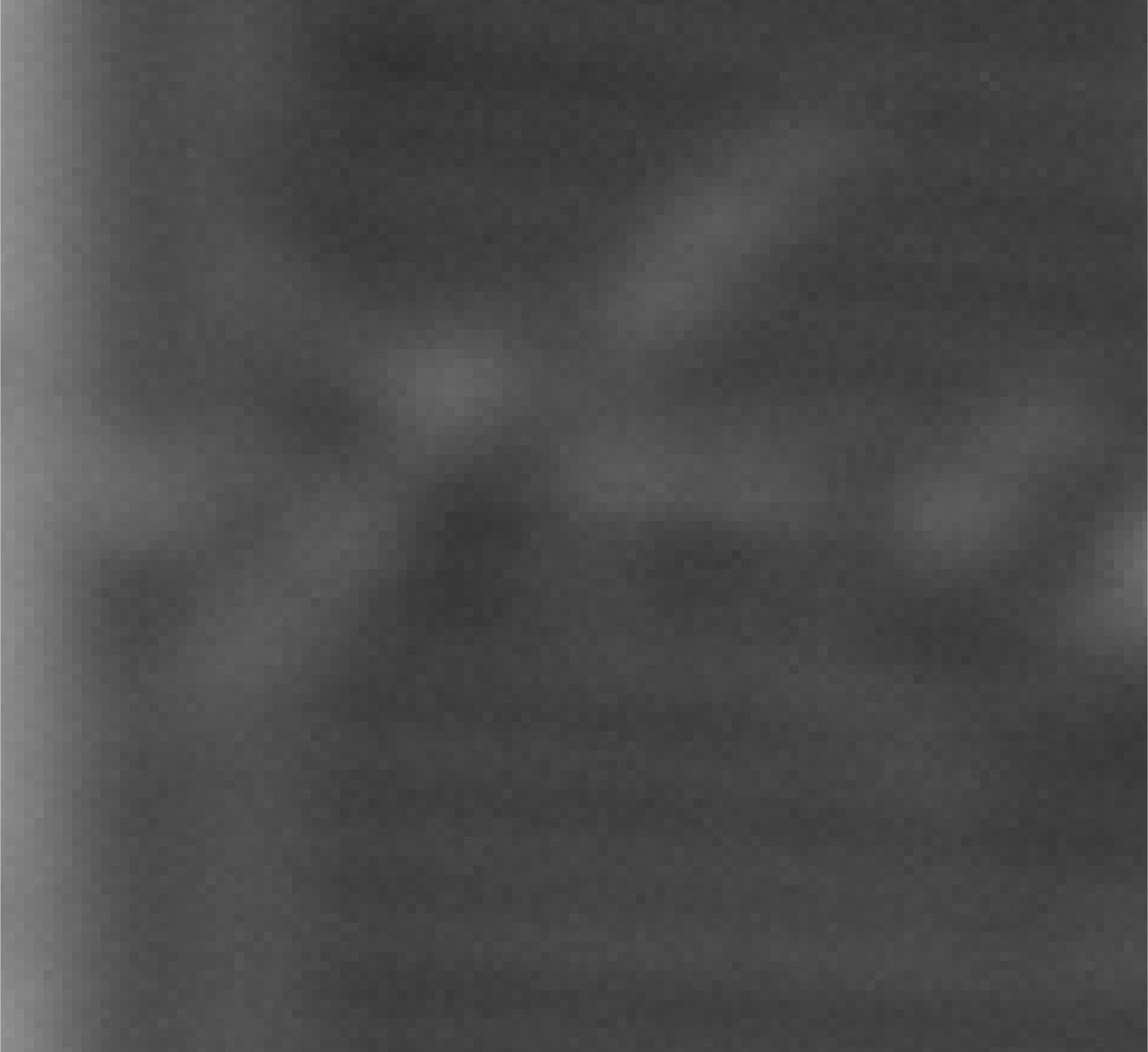}
    \caption{}
    \end{subfigure}
    \begin{subfigure}{0.19\textwidth}
    \includegraphics[width=\textwidth]{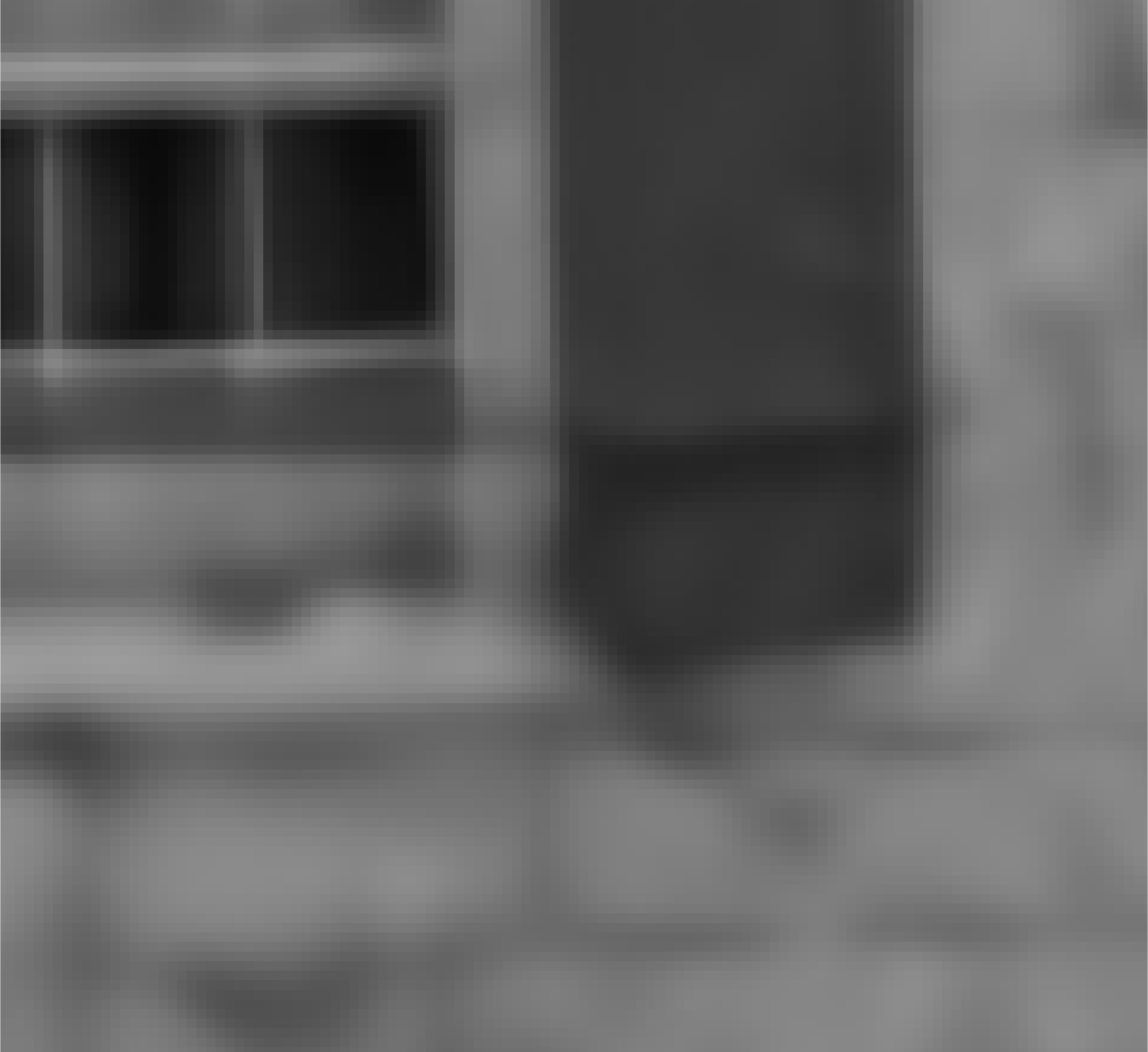}
    \includegraphics[width=\textwidth]{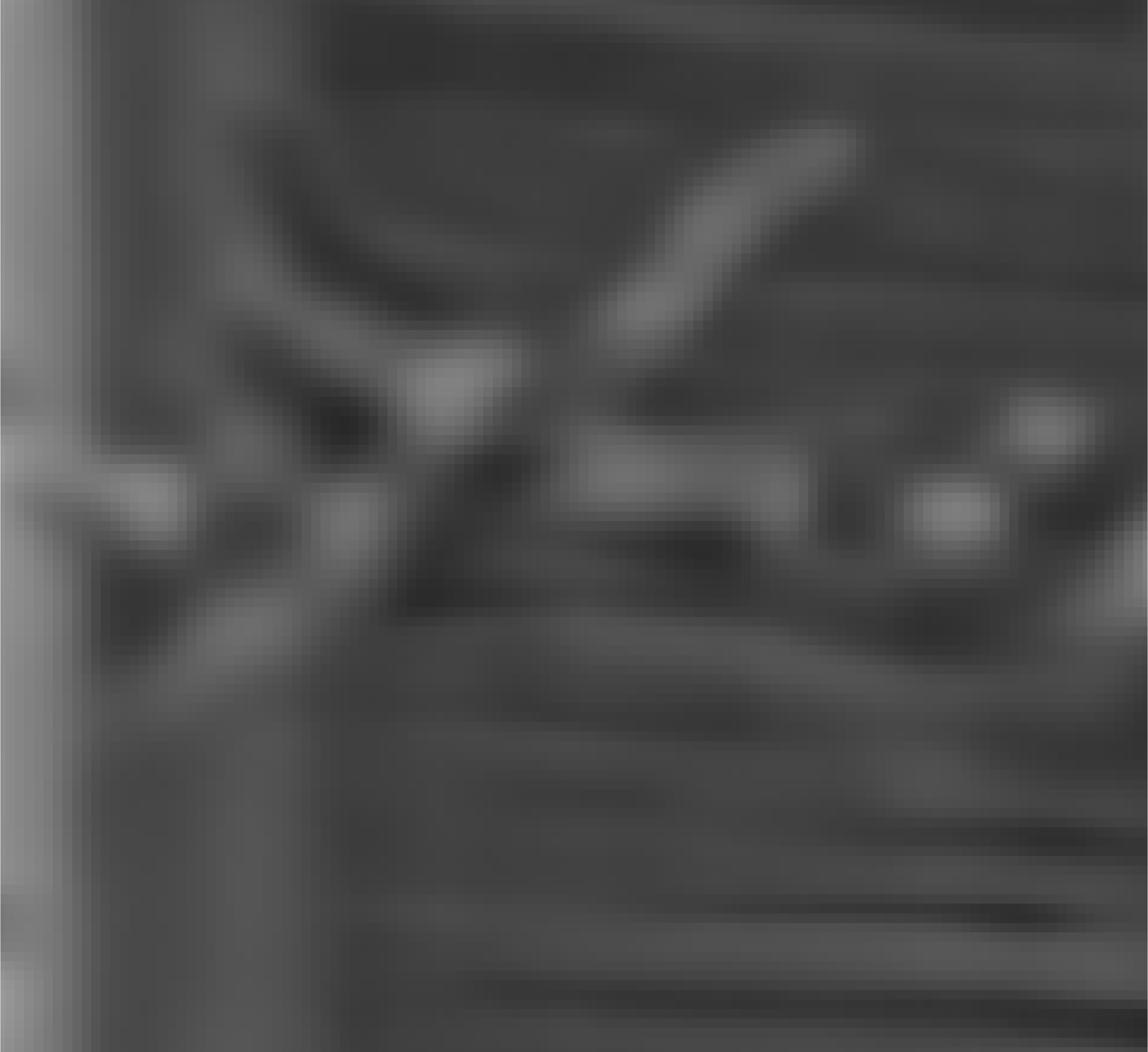}
    \caption{}
    \end{subfigure}
    \begin{subfigure}{0.19\textwidth}
    \includegraphics[width=\textwidth]{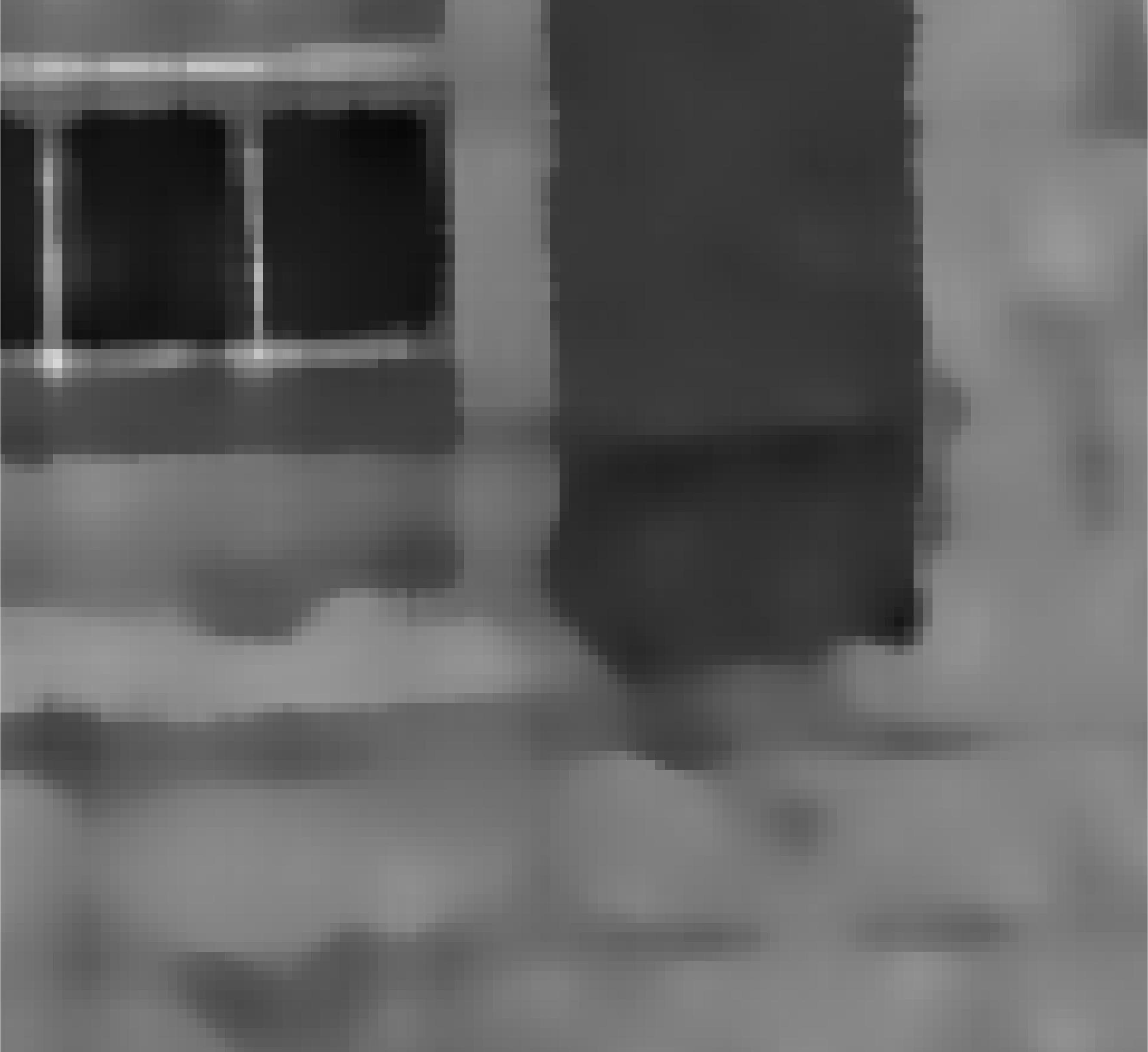}
    \includegraphics[width=\textwidth]{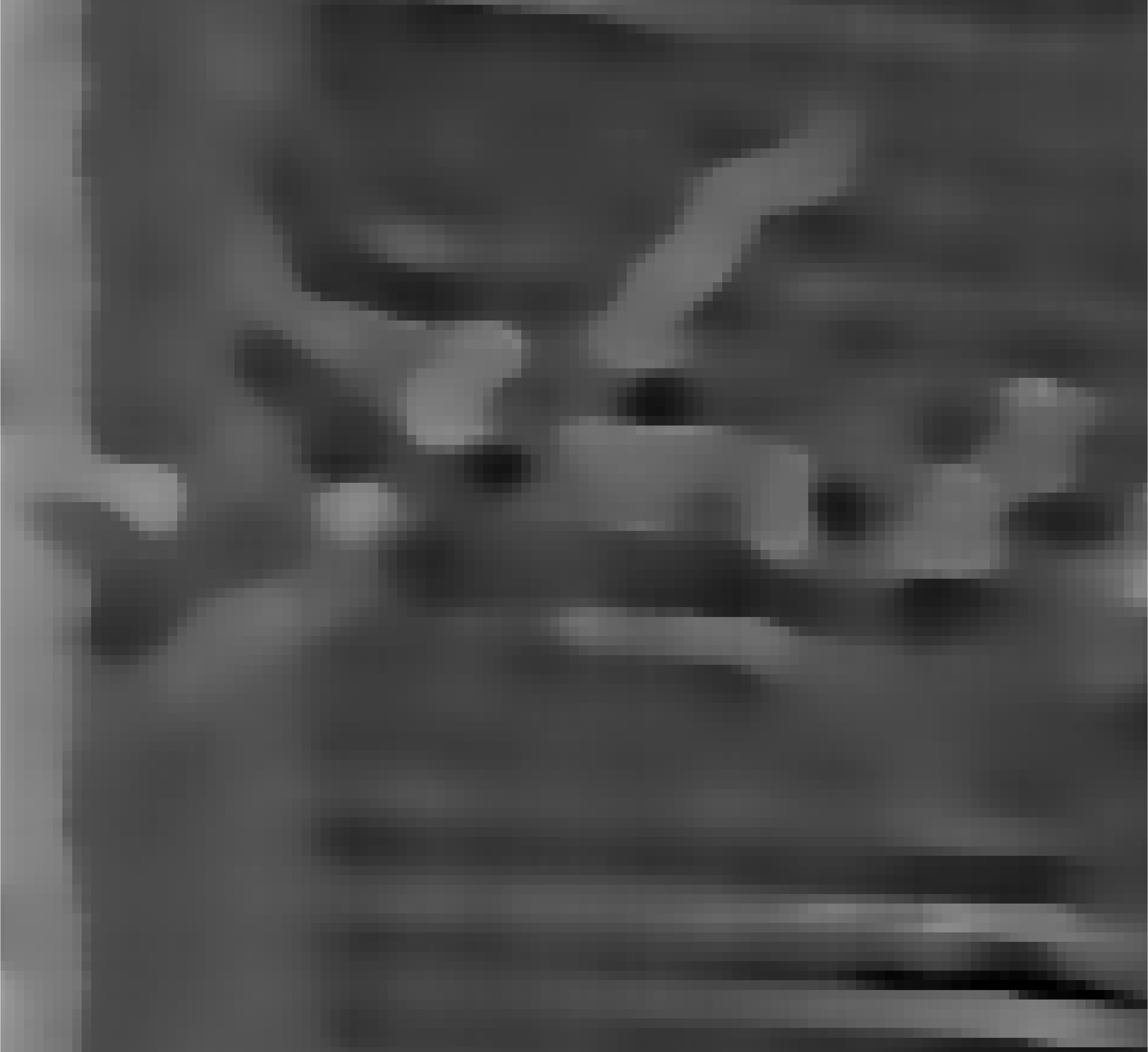}
    \caption{}
    \end{subfigure}
    \begin{subfigure}{0.19\textwidth}
    \includegraphics[width=\textwidth]{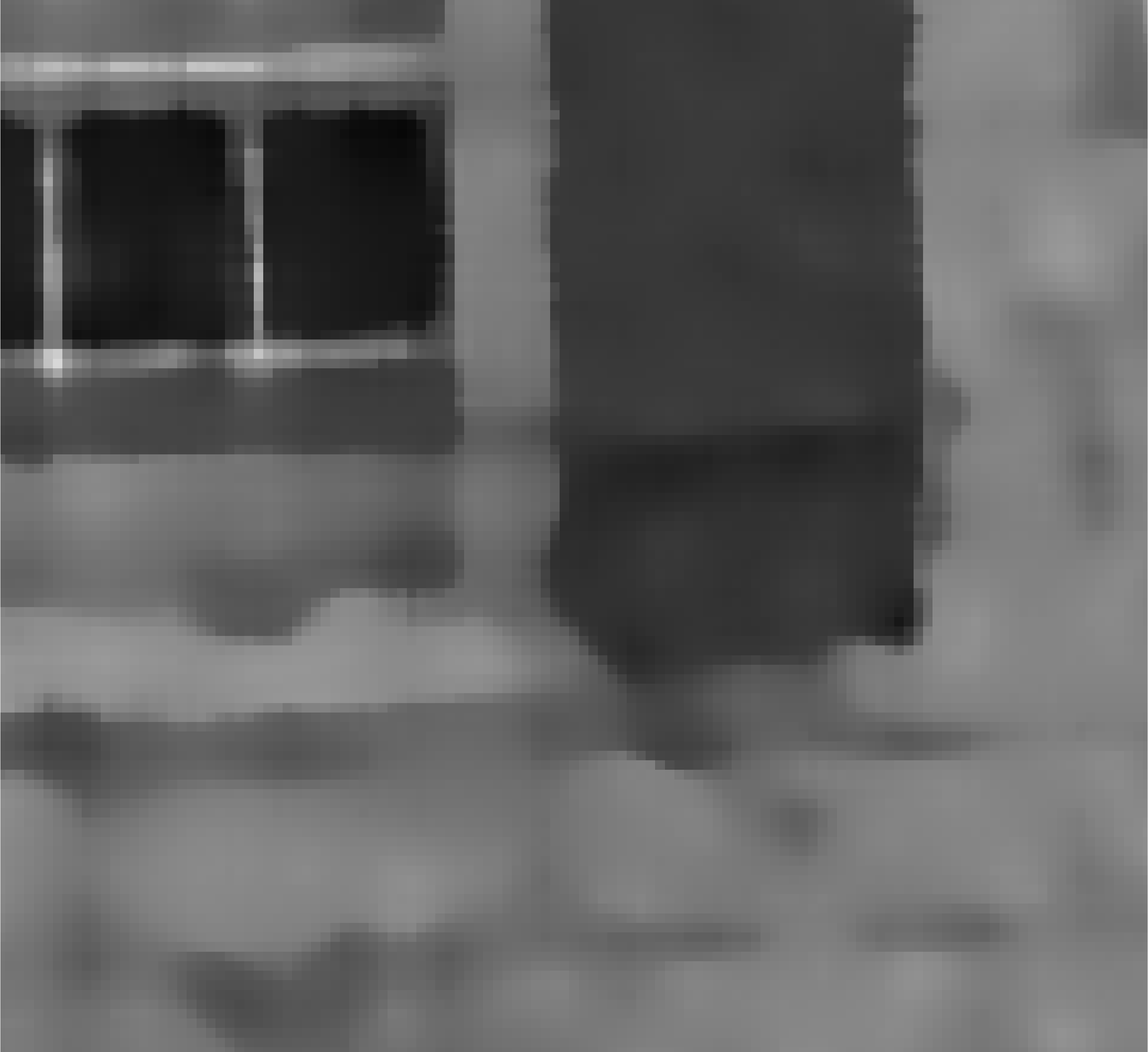}
    \includegraphics[width=\textwidth]{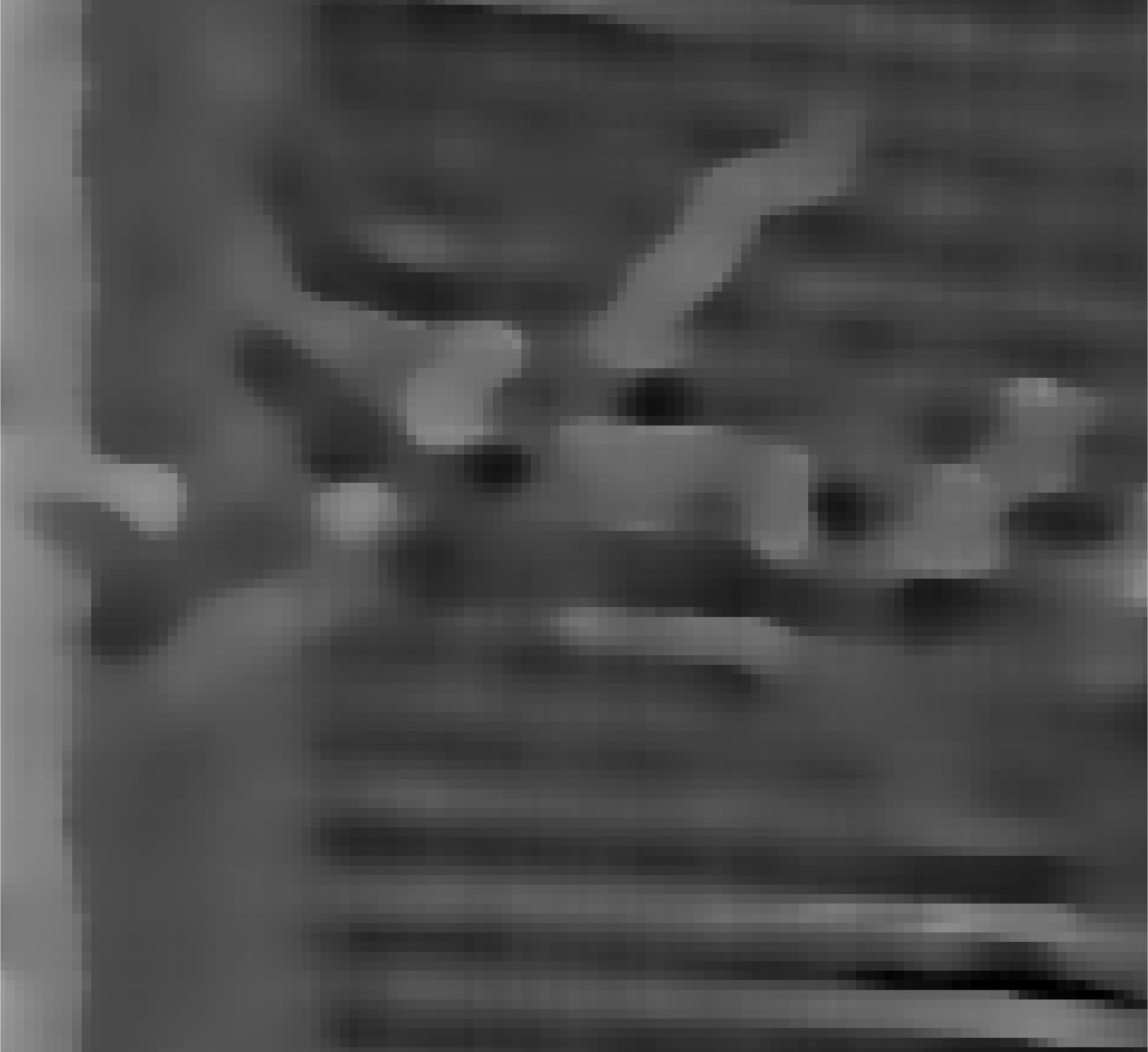}
    \caption{}
    \end{subfigure}
\caption{Zoomed details for image deblurring and denoising with neural network initialization: (a) original image, (b) blurred and noisy image (PSNR = 19.97 (top) / 19.51 (bottom), SSIM = 0.3512 / 0.3023), (c) network reconstruction (PSNR = 21.91 / 19.88, SSIM = 0.4860 / 0.3952), (d) standard iterative scheme (PSNR = 22.70 / 21.53, SSIM = 0.5313 / 0.5169), (e) final reconstruction (PSNR = 22.71 / 22.09, SSIM = 0.5325 / 0.5727).}
    \label{fig:network_test_zoom}
\end{figure}

\begin{table}
\centering
   \begin{subtable}{0.6\textwidth}
   \begin{tabular}{c|c|c|c|c}
         & RRE & PSNR & SSIM & GMSD \\ \hline
        blurred  & 0.2154 & 20.12 & 0.3489 & 0.2733 \\ 
		  $\bx_{0}^{\delta}$ & 0.1719 & 22.08 & 0.4741 & 0.1878 \\
        $\bx_{1}^{\delta}$ & 0.1636 & 22.51 & 0.4993 & 0.1802 \\
		$\bx_{n_{\mathrm{std}}}^{\delta}$  & 0.1600 & 22.70 & 0.5123 & 0.1792 \\ \hline
		  $\bx_{n_{\mathrm{std}}+1}^{\delta}$ & 0.1598 & 22.71 & 0.5129 & 0.1781\\
        $\bx_{n_{\mathrm{std}}+2}^{\delta}$ & 0.1597 & 22.71 & 0.5134 & 0.1774 \\
        $\bx_{n_{\mathrm{std}}+3}^{\delta}$ & 0.1597 & 22.72 & 0.5137 & 0.1769 \\
	\end{tabular}
    \caption{}
    \end{subtable}
    \begin{subtable}{0.6\textwidth}
    \begin{tabular}{c|c|c|c|c}
         & RRE & PSNR & SSIM & GMSD \\ \hline
        blurred  & 0.1538 & 23.11 & 0.6398 & 0.2520 \\ 
		  $\bx_{0}^{\delta}$ & 0.1239 & 24.98 & 0.7369 & 0.2046 \\
        $\bx_{1}^{\delta}$ & 0.1155 & 25.59 & 0.7549 & 0.1944 \\
		$\bx_{n_{\mathrm{std}}}^{\delta}$  & 0.1111 & 25.93 & 0.7670 & 0.1812  \\ \hline
		$\bx_{n_{\mathrm{std}}+1}^{\delta}$ & 0.1101 & 26.01 & 0.7697 & 0.1752 \\
        $\bx_{n_{\mathrm{std}}+2}^{\delta}$ & 0.1093 & 26.08 & 0.7718 & 0.1704 \\
        $\bx_{n_{\mathrm{std}}+3}^{\delta}$ & 0.1086 & 26.13 & 0.7736 & 0.1667 \\
	\end{tabular}
    \caption{}
    \end{subtable}
	
	\caption{Performance metrics for deblurring and denoising with neural network initialization: RRE, PSNR, SSIM, and GMSD of the blurred image, the network reconstruction, and the iterative graph Laplacian reconstructions, where the standard iteration is used above the horizontal line and the error-equation stage of the mixed scheme is used below the line. Table (a) shows measures for the first test image (House Front) and (b) for the second test image (Windows with Flowers).}
    \label{Table:network_test}
\end{table}

\section*{Conclusions}
We have introduced three iterated graph Laplacian regularization schemes for linear image restoration problems: a standard iteration, an error-equation iteration, and a mixed strategy combining the two. In all cases, the graph Laplacian is updated throughout the reconstruction process, allowing the regularizer to progressively incorporate more accurate structural information about the unknown image.

We proved stability and convergence results in the vanishing-noise regime under a priori parameter and stopping rules. The numerical experiments on deblurring and computed tomography confirm that updating the graph improves the single-step graphLa\texttt{+}$\Psi$ reconstruction. The standard iterations mainly improve the overall reconstruction quality, whereas the error-equation stage can further enhance local details once the standard scheme has reached a plateau.

\appendix

\section{Joint coercivity assumption}\label{sec:appendix-coercivity}

In this section, we verify that the uniform joint coercivity
\Cref{eq:joint-coercivity} of \Cref{hyp:joint-coercivity} holds for two
representative model operators: a periodic deblurring model and a full-angle CT
model. The numerical experiments of \Cref{ssec:CT} instead use a
sparse-angle CT model, which is more challenging and better showcases the
reconstruction properties of the iterative methods introduced here. Uniform
joint coercivity can also be established for that operator, but the argument is
considerably more involved and lies outside the scope of this presentation; we
therefore restrict the analysis to the standard full-angle case.

By the contrast cap in \Cref{eq:gaussian-weights}, every graph input
$\bz\in X$ and every spatial edge satisfy
\[
0<\omega_*\le \omega_\bz(i,j)\le 1,
\qquad
\omega_*:=\exp\!\left(-\frac{B_{\rm c}^2}{\sigma^2}\right).
\]
Thus the estimates below are uniform over all graph inputs. When the graph input takes values in an interval
of length at most $B_{\rm c}$, the contrast cap has no effect.

\begin{remark}\label{rem:finite-dim-coercivity}
For a fixed discretization level $N$, the verification of the joint coercivity is elementary. If the spatial graph is connected, then the positive edge weights imply
\[
\ker(\Delta_\bz)=\operatorname{span}\{\bone\}
\qquad \text{for every }\bz\in X.
\]
Therefore, if $K\bone\ne0$, then $\ker K\cap\ker\Delta_\bz=\{0\}$ for every $\bz$.
Consequently there exists $C_{J,N}>0$, independent of $\bz$ but possibly
dependent on $N$, such that
\[
\norm{\bu}_2\le C_{J,N}\bigl(\norm{K\bu}_Y+\norm{\Delta_\bz\bu}_q\bigr).
\]
For each matrix operator used in the experiments (the deblurring and CT
matrices), it is straightforward to check that $K\bone\ne0$. This alone,
however, does not yield the dimension-free constant of \Cref{hyp:joint-coercivity},
since the constant $C_{J,N}$ may still diverge as $N\to\infty$.
\end{remark}

\subsection{Dimension-free estimate for a periodic blur model}\label{sec:appendix-deblurring}

We next give a genuinely joint estimate for a simple blurring model operator. Low frequencies are controlled by the blur, while the graph Laplacian controls the remaining frequencies. The result is stated for $q=2$.

Let $P_N=(\mathbb Z/n\mathbb Z)^2$, $N=n^2$, and let $K_N\colon X_N\to X_N$ be the periodic convolution operator generated by a fixed pixel-scale PSF $h\in\ell^1(\mathbb Z^2)$:
\[
K_N\bu=h*\bu.
\]
In this subsection, both the neighbor relation and the spatial distance in the
graph construction are understood periodically on $P_N$.

Let
\[
\widehat h(\theta):=\sum_{r\in\mathbb Z^2}h(r)\,e^{-ir\cdot\theta},
\qquad \theta\in[-\pi,\pi]^2.
\]
We assume only that
\[
\widehat h(0)=\sum_r h(r)\ne0.
\]
In particular, every normalized PSF satisfies this assumption. The symbol $\widehat h$ may vanish away from the origin.

Let $L_N$ be the periodic nearest-neighbor grid Laplacian,
\[
(L_N\bu)(i)=\sum_{j\sim i}(\bu(i)-\bu(j)),
\]
where $j\sim i$ denotes the four nearest neighbors. Its Fourier symbol is
\[
\lambda(\theta)=4-2\cos\theta_1-2\cos\theta_2.
\]
We assume that the graph used to build $\Delta_\bz$ contains the nearest-neighbor grid edges. Since the corresponding graph weights are bounded below by $\omega_*$, the Dirichlet forms satisfy
\begin{equation}\label{eq:graph-form-comparison}
\langle \Delta_\bz\bu,\bu\rangle
\ge \omega_*\langle L_N\bu,\bu\rangle,
\end{equation}
where the inner product is the normalized Euclidean inner product on $X_N$.

\begin{proposition}\label{prop:dim-free-deblurring}
Under the assumptions above, there exists a constant $C_J>0$, independent of $N$ and of $\bz$, such that
\[
\norm{\bu}_2\le C_J\bigl(\norm{K_N\bu}_2+\norm{\Delta_\bz\bu}_2\bigr)
\qquad\text{for every }\bu\in X_N.
\]
\end{proposition}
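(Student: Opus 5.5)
Fourier analysis on the torus $P_N$ reduces the claim to a symbolwise lower bound. The plan is to split frequencies into a low band, where $\widehat h$ is bounded away from zero, and the complementary high band, where $\lambda$ is bounded away from zero.

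Since $\widehat h$ is continuous ($h\in\ell^1$) and $\widehat h(0)\neq0$, choose $\rho>0$, independent of $N$, such that $|\widehat h(\theta)|\ge|\widehat h(0)|/2$ for $|\theta|_\infty\le\rho$. On the complement $\{|\theta|_\infty>\rho\}\cap[-\pi,\pi]^2$ we have
\[
\lambda(\theta)\ge\lambda_\rho\coloneqq 2-2\cos\rho>0 .
\]
The discrete Fourier transform on $(\mathbb Z/n\mathbb Z)^2$ is unitary for the normalized inner product, up to a fixed scaling. Write $\bu=\bu_{\rm lo}+\bu_{\rm hi}$ according to the frequencies $\theta_k=2\pi k/n$ with $|\theta_k|_\infty\le\rho$ or $>\rho$. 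Then:
\begin{itemize}
\item[(a)] $\norm{K_N\bu}_2\ge\norm{K_N\bu_{\rm lo}}_2\ge\tfrac12|\widehat h(0)|\,\norm{\bu_{\rm lo}}_2$, because $K_N$ is diagonalized by the DFT with eigenvalues $\widehat h(\theta_k)$. This holds for the periodic convolution by the pixel-scale PSF: the eigenvalues are exactly the symbol sampled on the grid, once $h$ is periodized, which does not change $\widehat h(\theta_k)$.
\item[(b)] $\langle L_N\bu,\bu\rangle=\sum_k\lambda(\theta_k)|\widehat\bu(k)|^2\ge\lambda_\rho\norm{\bu_{\rm hi}}_2^2$.
\end{itemize}

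Next, I would transfer (b) to $\Delta_\bz$ using \Cref{eq:graph-form-comparison}. By Cauchy--Schwarz,
\[
\omega_*\lambda_\rho\norm{\bu_{\rm hi}}_2^2\le\langle\Delta_\bz\bu,\bu\rangle\le\norm{\Delta_\bz\bu}_2\norm{\bu}_2 .
\]
This is the main obstacle: the right-hand side involves the full $\norm{\bu}_2$ rather than $\norm{\bu_{\rm hi}}_2$, and $\Delta_\bz$ is not diagonal in Fourier, so $\bu_{\rm lo}$ cannot be separated off directly. The way around it is to combine the two estimates through the total norm. Using $\norm{\bu}_2^2=\norm{\bu_{\rm lo}}_2^2+\norm{\bu_{\rm hi}}_2^2$ and setting $c_1\coloneqq 2/|\widehat h(0)|$, we obtain
\[
\norm{\bu}_2^2\le c_1^2\norm{K_N\bu}_2^2+(\omega_*\lambda_\rho)^{-1}\norm{\Delta_\bz\bu}_2\norm{\bu}_2 .
\]

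Finally, I would resolve this quadratic inequality in $t=\norm{\bu}_2$. From $t^2\le a^2+bt$ with $a=c_1\norm{K_N\bu}_2$ and $b=(\omega_*\lambda_\rho)^{-1}\norm{\Delta_\bz\bu}_2$, we get $t\le b+a$, since $t\le\tfrac12\bigl(b+\sqrt{b^2+4a^2}\bigr)\le a+b$. This gives
\[
C_J=\max\bigl\{2/|\widehat h(0)|,\;(\omega_*\lambda_\rho)^{-1}\bigr\},
\]
which depends only on $h$, $\sigma$ and $B_{\rm c}$ (through $\omega_*$), and not on $N$ or $\bz$. One should also check that $\lambda$ and $\widehat h$ are evaluated at the same grid frequencies and that the splitting remains valid for small $n$. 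Both points are automatic, because the bounds hold pointwise for every $\theta\in[-\pi,\pi]^2$.
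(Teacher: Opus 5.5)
Your proof is correct and takes essentially the same route as the paper. It splits $\bu$ into Fourier low and high bands, uses that the blur symbol is bounded below on the low band, controls the high band through \Cref{eq:graph-form-comparison} and Cauchy--Schwarz, and absorbs the resulting $\norm{\Delta_\bz\bu}_2\norm{\bu}_2$ term. The differences are minor refinements: you exploit orthogonality of the split ($\norm{K_N\bu}_2\ge\norm{K_N\bu_{\rm lo}}_2$ and Pythagoras) and solve a quadratic, whereas the paper uses the triangle inequality and Young. This removes the $M_K$-dependent cross term and gives a slightly cleaner constant.
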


\begin{proof}
Since $\widehat h$ is continuous and $\widehat h(0)\ne0$, there exist $\tau>0$ and $\kappa>0$, independent of $N$, such that
\[
|\widehat h(\theta)|\ge \kappa
\qquad\text{whenever }\lambda(\theta)\le\tau.
\]
Let $P_\tau$ be the Fourier projection onto the modes with $\lambda(\theta)\le\tau$, and write
\[
\bu_\ell=P_\tau\bu,
\qquad
\bu_h=(I-P_\tau)\bu.
\]
On the low-frequency space, $K_N$ is uniformly bounded below:
\[
\norm{\bu_\ell}_2\le \kappa^{-1}\norm{K_N\bu_\ell}_2.
\]
By Young's inequality, $\norm{K_N\bv}_2\le M_K\norm{\bv}_2$ with $M_K=\norm{h}_{\ell^1}$, so
\begin{equation}\label{eq:low-bound-blur}
\norm{\bu_\ell}_2
\le \kappa^{-1}\norm{K_N\bu}_2+\kappa^{-1}M_K\norm{\bu_h}_2.
\end{equation}
For the high-frequency part, the definition of $P_\tau$ gives
\[
\tau\norm{\bu_h}_2^2\le \langle L_N\bu,\bu\rangle.
\]
Using \Cref{eq:graph-form-comparison} and Cauchy--Schwarz,
\begin{equation}\label{eq:high-bound-blur}
\norm{\bu_h}_2^2
\le (\tau\omega_*)^{-1}\langle\Delta_\bz\bu,\bu\rangle
\le (\tau\omega_*)^{-1}\norm{\Delta_\bz\bu}_2\norm{\bu}_2.
\end{equation}
Combining \Cref{eq:low-bound-blur} with $\norm{\bu}_2\le\norm{\bu_\ell}_2+\norm{\bu_h}_2$ gives
\[
\norm{\bu}_2
\le \kappa^{-1}\norm{K_N\bu}_2
+\bigl(1+\kappa^{-1}M_K\bigr)\norm{\bu_h}_2.
\]
Insert \Cref{eq:high-bound-blur}. With
\[
a:=\norm{\bu}_2,
\qquad b:=\norm{K_N\bu}_2,
\qquad c:=\norm{\Delta_\bz\bu}_2,
\]
we obtain
\[
a\le \kappa^{-1}b+
\bigl(1+\kappa^{-1}M_K\bigr)(\tau\omega_*)^{-1/2}(ca)^{1/2}.
\]
Set
\[
A:=\bigl(1+\kappa^{-1}M_K\bigr)(\tau\omega_*)^{-1/2}.
\]
Young's inequality gives $A(ca)^{1/2}\le \frac12 a+\frac12 A^2c$. Therefore
\[
a\le 2\kappa^{-1}b+A^2c.
\]
Equivalently,
\[
\norm{\bu}_2\le C_J\bigl(\norm{K_N\bu}_2+\norm{\Delta_\bz\bu}_2\bigr),
\]
with $C_J=\max\{2\kappa^{-1},A^2\}$, which is independent of $N$ and $\bz$.
\end{proof}

\subsection{Dimension-free estimate for full-angle CT}\label{sec:appendix-CT}

For CT, the cleanest dimension-free example is full-angle tomography equipped
with the natural filtered data norm. At the continuous level, the relevant
estimate is the Plancherel identity for the Radon transform. More precisely, the
Fourier slice theorem gives
\[
\mathcal F_s(\mathcal R f)(\theta,\rho)
=
\widehat f(\rho\cos\theta,\rho\sin\theta),
\]
where $\mathcal R$ is the Radon transform and $\mathcal F_s$ denotes the
one-dimensional Fourier transform in the detector variable. Hence, by
Plancherel's theorem and the polar-coordinate change of variables in frequency
space,
\[
\norm{f}_{L^2(\R^2)}^2
=
c_{\mathcal F}
\int_0^\pi\!\int_\R
|\rho|\,|\mathcal F_s(\mathcal R f)(\theta,\rho)|^2\,d\rho\,d\theta ,
\]
where $c_{\mathcal F}>0$ depends only on the Fourier-transform normalization; see,
for example, \cite{natterer2001}. Absorbing this
constant into the definition of the data norm, set
\[
\norm{g}_Y^2:=
c_{\mathcal F}
\int_0^\pi\!\int_\R
|\rho|\,|\mathcal F_s g(\theta,\rho)|^2\,d\rho\,d\theta .
\]
Then
\[
\norm{f}_{L^2(\R^2)}=\norm{\mathcal R f}_Y .
\]

At the continuous level, the above identity 
is an isometry; in particular, the coercivity bound
$\norm{f}_{L^2(\R^2)}\le\norm{\mathcal R f}_Y$ holds with constant $1$. The
finite-dimensional operator $K_N$ and the discrete data norm $\norm{\cdot}_Y$ are
obtained by discretizing $\mathcal R$ and by applying quadrature to the filtered
integral over $(\theta,\rho)$. When the discretization is consistent and the
quadrature convergent, this exact isometry passes to the discrete level as a norm
equivalence whose constants remain bounded as the mesh is refined; in particular
the lower bound
\begin{equation}\label{eq:ct-filtered-lower-bound}
\norm{\bu}_2\le C_R\norm{K_N\bu}_Y,
\end{equation}
holds with $C_R$ independent of $N$. We take \cref{eq:ct-filtered-lower-bound}
as an assumption on the discretized CT model.

\begin{proposition}\label{prop:dim-free-CT}
Assume that $K_N$ is a full-angle CT discretization satisfying
\Cref{eq:ct-filtered-lower-bound}, and that
\[
\norm{K_N\bu}_Y\le M_K\norm{\bu}_2
\]
with $M_K$ independent of $N$. Then, for every $q\in[1,2]$, the joint coercivity
condition holds on $X_N$ with a constant independent of $N$ and $\bz$:
\[
\norm{\bu}_2
\le
C_R\norm{K_N\bu}_Y
\le
C_R\bigl(\norm{K_N\bu}_Y+\norm{\Delta_\bz\bu}_q\bigr).
\]
\end{proposition}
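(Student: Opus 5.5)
The proof is essentially immediate from the assumed lower bound \Cref{eq:ct-filtered-lower-bound}, so the plan is short.

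First, I will take an arbitrary $\bu\in X_N$ and an arbitrary reference image $\bz\in X_N$. The assumption \Cref{eq:ct-filtered-lower-bound} already gives
\[
\norm{\bu}_2\le C_R\norm{K_N\bu}_Y ,
\]
with $C_R$ independent of $N$. This is the first inequality in the statement.

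Second, since $\norm{\Delta_\bz\bu}_q\ge0$ for every $q\in[1,2]$ and every $\bz$, adding this nonnegative term on the right only enlarges it. This yields the second inequality, i.e.\ \Cref{eq:joint-coercivity} with $C_J\coloneqq C_R$. The constant involves neither $\bz$ (the graph term is simply discarded) nor $N$ (by assumption on $C_R$), nor $q$.

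Third, to match \Cref{hyp:joint-coercivity} in full, I will note that the upper bound $\norm{K_N\bu}_Y\le M_K\norm{\bu}_2$ is assumed directly, again with $M_K$ independent of $N$. Both parts of the hypothesis therefore hold uniformly.

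There is no real obstacle: the only substantive content, the dimension-free filtered lower bound, has been placed in the assumption \Cref{eq:ct-filtered-lower-bound}. In particular, unlike \Cref{prop:dim-free-deblurring}, no frequency splitting is needed, since the full-angle data alone control every frequency. For the same reason the result holds for every $q\in[1,2]$ rather than only $q=2$.
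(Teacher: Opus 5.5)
Your proposal is correct and matches the paper's proof. The paper simply notes that the claim is immediate from \Cref{eq:ct-filtered-lower-bound}: the first inequality is that assumption, and the second follows by adding the nonnegative term $\norm{\Delta_\bz\bu}_q$, so $C_J=C_R$ works uniformly in $N$, $\bz$, and $q$.
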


\begin{proof}
The claim is immediate from 
\Cref{eq:ct-filtered-lower-bound}.
\end{proof}

\bibliographystyle{plain}
\bibliography{iterated_graph_Laplacian.bib}
\end{document}